\documentclass[11pt]{article}

\usepackage[T1]{fontenc}
\usepackage{lmodern}
\usepackage[a4paper,margin=1.02in]{geometry}
\usepackage{amsmath,amssymb,amsthm,mathtools}
\usepackage{enumitem}
\usepackage{microtype}
\usepackage{xcolor}
\usepackage[colorlinks=true,linkcolor=blue!55!black,citecolor=blue!55!black,urlcolor=blue!55!black]{hyperref}
\allowdisplaybreaks
\numberwithin{equation}{section}
\newtheorem{theorem}{Theorem}[section]
\newtheorem{proposition}[theorem]{Proposition}
\newtheorem{lemma}[theorem]{Lemma}
\newtheorem{corollary}[theorem]{Corollary}
\newtheorem{definition}[theorem]{Definition}
\newtheorem{remark}[theorem]{Remark}
\newtheorem{conjecture}[theorem]{Conjecture}

\newcommand{\E}{\mathbb E}
\newcommand{\Prob}{\mathbb P}

\newcommand{\1}{\mathbf 1}

\newcommand{\Pmin}{P_{\min}}

\DeclareMathOperator{\dist}{dist}
\DeclareMathOperator{\Vol}{Vol}
\DeclareMathOperator{\Law}{Law}
\title{Pointwise subexponential growth\\
and near-diffusive displacement on bounded-degree graphs with non-negative Ollivier--Ricci curvature}
\author{Chiyu Zhou\thanks{School of Mathematical Sciences, University of Science and Technology of China, Hefei 230026, China. Email address: \texttt{dovong@mail.ustc.edu.cn}.}}
\date{July 2026}
\hypersetup{
 pdftitle={Pointwise subexponential growth and near-diffusive displacement on bounded-degree graphs with non-negative Ollivier--Ricci curvature},
 pdfauthor={Chiyu Zhou},
 pdfsubject={Log-Harnack inequalities, random-walk displacement, and Ollivier curvature},
 pdfkeywords={log-Harnack inequality, Ollivier curvature, quasi-doubling, random walk, relative entropy, Davies estimate}
}

\begin{document}
\maketitle

\begin{abstract}
 Let $G=(V,E)$ be a possibly infinite, locally-finite graph with non-negative Ollivier-Ricci curvature and with degrees bounded by $d<\infty$. We prove that there exists a constant $C_d$ such that the continuous-time random walk displacement and log-volume growth satisfy
\[
 \E_x\dist(x,X_t)^2\le t\exp\left[C_d\sqrt{\log (t)\log\log (t)}\right],
\]
\[
 \log\Vol(B(x,r))\le \exp\left[C_d\sqrt{\log (r)\log\log (r)}\right],
\]
for every $x\in V$ and $r,t\ge e^e$. 
\end{abstract}


\section{Introduction}
In Riemannian geometry,  a lower bound of Ricci curvature provides a series of estimates for the underlying manifold, including Bishop-Gromov volume comparison theorem, diameter bounds, splitting theorems, and spectral estimates~\cite{Jost}. These classical results
have motivated the development of discrete notions
of curvature
\cite{Michael,Linyau,Forman,Erbar,Ollivier2009,
jost2021characterizationsformancurvature,najman,generalgraph,
barton2026ollivierriccicurvaturecausalsets}.

Ollivier-Ricci curvature~\cite{Ollivier2009} formulates curvature on arbitrary metric spaces, hence in particular on graphs and Markov chain in a transportation-based criterion.  Its consequences
for graphs include concentration, Liouville
properties, and restrictions on expansion and mixing time
\cite{JoulinOllivier2010,M2023OR,MuenchSalez2022,Salez2022}.

We first introduce a general Markov-chain framework in which several notions used throughout the paper will be defined. Let \(P\) be an
\textit{irreducible Markov kernel} on a countable state space \(V\). We assume
that \(P\) is \emph{weakly reversible}, in the sense that
\[
    P(x,y)>0
    \quad\Longleftrightarrow\quad
    P(y,x)>0
    \qquad\text{for all }x,y\in V.
\]
When needed, we impose the stronger assumption that \(P\) is
\emph{reversible}: there exists a strictly positive measure
\(m:V\to(0,\infty)\) such that
\begin{equation*}
    m(x)P(x,y)=m(y)P(y,x),
    \qquad x,y\in V.
\end{equation*}
Such a measure \(m\) is called a \emph{reversible measure} for \(P\).

The support of \(P\) therefore determines an undirected graph on \(V\):
we write \(x\sim y\) whenever \(P(x,y)>0\). The \textit{distance} is defined via
\[
    \dist(x,y)
    :=
    \min\bigl\{k\geq0:P^k(x,y)>0\bigr\}.
\]

For probability measures \(\mu,\nu\) on \(V\),
their \textit{\(1\)-Wasserstein distance} is
\[
    W_1(\mu,\nu)
    :=
    \inf_{\pi}
    \sum_{u,v\in V}\dist(u,v)\pi(u,v),
\]
where the infimum is taken over all couplings \(\pi\) of \(\mu\) and
\(\nu\). Let
\[
    \widehat P:=\frac12(I+P)
\]
be the \(1/2\)-lazy version of \(P\). We say that \(P\) has
\emph{non-negative Ollivier--Ricci curvature} if
\[
    W_1\bigl(\widehat P(x,\cdot),\widehat P(y,\cdot)\bigr)
    \leq 1,
    \qquad\text{whenever }x\sim y.
\]
See
\cite{Salez2025modernaspectsmarkovchains} for equivalent definitions.

Some arguments developed later apply to
this general class of weakly reversible transition matrices. Our main
geometric results, however, concern normalized random walks on
unweighted graphs. Accordingly, unless explicitly stated otherwise, we
henceforth specialize to a connected, locally finite, simple graph
\(G=(V,E)\) and define the \textit{random walk kernel}
\[
    P(x,y)
    :=
    \frac{\1_{\{x\sim y\}}}{\deg(x)}.
\]
In this case, reversibility is automatic with $m(x):=\deg(x)$, the  distance
induced by \(P\) coincides with the usual graph distance, and we say graph $G$ has
non-negative Ollivier--Ricci curvature if
\[
    W_1\bigl(\widehat P(x,\cdot),\widehat P(y,\cdot)\bigr)
    \leq 1,
    \qquad\text{whenever }x\sim y.
\]

Non-negative Ollivier--Ricci curvature is  a  local
condition: for an edge \(x\sim y\), its value depends only on the
neighborhoods of \(x\) and \(y\), hence on the graph inside a ball of
radius \(2\). Our concern is how such local property constrains
the geometry and random walk at large scales. The analogous local-to-global
question is fundamental in Riemannian geometry. A model result is the
\textit{Bishop--Gromov volume comparison theorem}: if \(M\) is a complete
\(n\)-dimensional Riemannian manifold with \(\operatorname{Ric}\geq 0\), then
for every \(x\in M\) the function
\[
 r^{-n} \operatorname{Vol}(B(x,r))
\]
is non-increasing. In particular,
\(\operatorname{Vol}(B(x,r))\leq \omega_n r^n\), where $\omega_n$ is the volume of a unit ball in $\mathbb R^n$, and \(M\) has uniformly
 doubling volume: $\Vol(B(x,2r))/\Vol(B(x,r))\leq C$, for some $C=C(n)$. Another phenomenon is that the Brownian motion on manifold \(M\) with \(\operatorname{Ric}\geq 0\) is at most diffusive~\cite{hsu}:
$$ \E_x\dist(x,X_t)^2\le O(t).$$
We write $\mathbb{E}_x$ for expectations taken with respect to the law of the random walk $X$ started with $X_0 = x$.

To formulate the corresponding questions on reversible Markov kernel $P$, we first recall the
discrete- and continuous-time random walks on \(V\). 
The lazy version \(\widehat P\) induces a discrete-time random walk\footnote{We follow the convention in \cite{hutchcroft2025boundeddegreegraphsnonnegativeollivierricci}.}
\((X_n)_{n\geq0}\) on \(V\). If \(X_0\) has distribution \(\mu_0\), then
\[
    \mathbb P\bigl(X_0=x_0,\ldots,X_n=x_n\bigr)
    =
    \mu_0(x_0)
    \prod_{i=1}^{n}\widehat P(x_{i-1},x_i).
\]
In our paper, it is more convenient to work
with the continuous-time random walk \((X_t)_{t\geq0}\) (heat semigroup) having generator
\[
    L:=P-I=2(\widehat P-I).
\]
In particular,
\[
    \mathbb P_x(X_t=y)
    =
    P_t(x,y)
    =
    e^{-t}\sum_{k=0}^{\infty}
    \frac{t^k}{k!}P^k(x,y),
\]
and we can write as $e^{L}$.
We use the same notation \(X\) for both chains: an integer subscript
\(n\) refers to discrete time, while a real parameter \(t\) refers to
continuous time. 

For a reversible Markov kernel $P$ with its reversible measure $m$, we define the volume of  
\(A\subseteq V\)
\[
    \Vol(A):=\sum_{z\in A}m(z),
\]
and the graph distance ball of radius $r$ around $x$
\[
    B(x,r):=\{y\in V:\dist(x,y)\leq r\}.
\]

This naturally raises the discrete question of what
large-scale volume and displacement estimates can be recovered from
 non-negative Ollivier--Ricci curvature on graphs. Ollivier~\cite[Problem L]{Ollivier2010} asked whether there is any analogue of 
Bishop--Gromov volume comparison theorem under non-negative Ollivier--Ricci curvature. Salez~\cite{Salez2022} established that bounded-degree non-negatively Ollivier-curved graphs
cannot be \textit{expanders}. Hutchcroft and Lopez
\cite{hutchcroft2024relationisoperimetrytotalvariation} later obtained a quantitative form of Salez's theorem. These were important breakthroughs,
but remained too weak to imply a subexponential volume bound for bounded-degree non-negatively Ollivier-curved graphs. The authors of \cite{meansquaredisplacement} studied the relation between displacement and Ollivier--Ricci curvature. Salez~\cite{Salez2022} proved the first-moment estimate
\[
\mathbb{E}_{X_0\sim\pi}\left[d(X_0,X_n)\right]=o(n)
\]
for bounded-degree non-negatively Ollivier-curved graphs.

A major recent advance was obtained by Hutchcroft and M\"unch
\cite[Theorem~1.1]{hutchcroft2025boundeddegreegraphsnonnegativeollivierricci}. They proved averaged subexponential volume
growth and averaged near-diffusive displacement for finite
bounded-degree graphs of non-negative Ollivier--Ricci curvature, and
extended their results to unimodular random rooted graphs. In particular,
their theorem applies to infinite transitive graphs.

Since \(\Vol(A)\leq d|A|\) on graphs with degrees bounded by \(d\), their
cardinality estimate yields the following volume-form consequence after
adjusting the degree-dependent constant.
\begin{theorem}[Hutchcroft--M\"unch]
For each $d<\infty$, there exists a constant $C_d$ such that if $G = (V, E)$ is a finite, connected graph with non-negative Ollivier--Ricci curvature and
degrees bounded by $d$, then
\[
\frac{1}{|V|} \sum_{x\in V} \mathbf{E}_x\left[ d(X_0, X_n)^2 \right] 
\leq n\exp\left[ C_d\sqrt{\log n} \right]  
= n^{1+o(1)}\qquad \text{and}
\]
\[
 \frac{1}{|V|} \sum_{x\in V} \log \Vol(B(x,r))
 \leq \exp\left[ C_d\sqrt{\log r} \right] 
 = r^{o(1)} 
\]

for every $r, n \geq 2$.
\end{theorem}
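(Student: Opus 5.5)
This theorem is quoted from Hutchcroft--M\"unch, so the plan is to recover it from their cardinality statement rather than reprove it from scratch. Their Theorem~1.1 gives, for finite connected $G$ with non-negative Ollivier--Ricci curvature and degrees at most $d$,
\[
\frac{1}{|V|}\sum_{x}\E_x\dist(X_0,X_n)^2\le n\exp[C'_d\sqrt{\log n}],
\qquad
\frac{1}{|V|}\sum_x\log|B(x,r)|\le \exp[C'_d\sqrt{\log r}]
\]
for the lazy walk $\widehat P$. This is the same convention the paper adopts for the discrete-time chain. The displacement statement is therefore literally theirs, and needs no change.

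For the volume statement we use $m(z)=\deg(z)\le d$. This gives $\Vol(B(x,r))\le d\,|B(x,r)|$, hence
\[
\log\Vol(B(x,r))\le \log d+\log|B(x,r)|.
\]
Averaging over $x$ bounds the left-hand average by
\[
\log d+\exp[C'_d\sqrt{\log r}].
\]
It remains to absorb the additive constant $\log d$ into the exponential by enlarging the constant.

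For $r\ge 2$ we have $\sqrt{\log r}\ge\sqrt{\log 2}>0$. Choose $C_d\ge C'_d$ large enough that
\[
\exp[(C_d-C'_d)\sqrt{\log 2}]\ge 1+\log d.
\]
Then $\exp[C'_d\sqrt{\log r}]\ge 1$, so
\[
\log d+\exp[C'_d\sqrt{\log r}]\le (1+\log d)\exp[C'_d\sqrt{\log r}]\le \exp[C_d\sqrt{\log r}].
\]
Taking $C_d$ to be the maximum of this constant and the displacement constant makes both inequalities hold simultaneously for all $r,n\ge 2$.

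The only step needing care is this absorption at small scales, and the lower bound $r\ge2$ is exactly what makes it work. The identification $n^{1+o(1)}$ and $r^{o(1)}$ is immediate, since $C_d\sqrt{\log n}=o(\log n)$.

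Were one to reprove the cardinality estimate itself, the main obstacle would be Hutchcroft--M\"unch's averaged entropy/Varopoulos--Carne argument. That argument bootstraps between displacement and entropy growth using Salez's sublinear-displacement mechanism, and here we simply cite it.
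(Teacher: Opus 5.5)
Your proposal is correct and follows the paper's own route. The paper likewise takes Hutchcroft--M\"unch's cardinality bound and converts it via $\Vol(A)\le d|A|$, ``adjusting the degree-dependent constant.'' Your explicit absorption of the additive $\log d$, using $r\ge 2$, simply spells out that adjustment.
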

Hutchcroft and M\"unch also conjectured that bounded-degree graphs of non-negative Ollivier--Ricci curvature have polynomial growth of bounded dimension and diffusive random walks, uniformly over the choice of root vertex~\cite[Conjecture~5.1]{hutchcroft2025boundeddegreegraphsnonnegativeollivierricci}:
\begin{conjecture}[Hutchcroft--M\"unch]
For each $d <\infty$, there exists a constant $C_d$ such that if $G = (V, E)$ is a graph with
non-negative Ollivier--Ricci curvature and degrees bounded by $d$ then
\[
\Vol(B(x,r)) \leq r^{C_d} \qquad \text{and} \qquad \mathbb{E}_x\left[d(x, X_n)^2\right] \leq C_d n
\]
for every $n, r \geq 1$ and $x \in V$.
\end{conjecture}

\section{Main results}

For $t\ge 0$ and $x\in V$, we define the heat entropy as  $$\mathcal H_t(x):=-\sum_{z\in V}P_t(x,z)\log \frac{P_t(x,z)}{m(z)}.$$

The main result of this paper provides a pointwise subexponential estimate of volume growth, heat entropy, and near diffusive displacement, uniformly of root vertex.
\begin{theorem}\label{thm:quasi-intro}
If  $G=(V,E)$  is a possibly infinite, connected graph with non-negative Ollivier-Ricci curvature and  degrees bounded by $d$ ,  then there exists a constant $C_d$ such that, uniformly in
$x\in V$ and $r,t\geq e^e$,
\begin{align*}
 \E_x\dist(x,X_t)^2&\le t\exp\left[C_d\sqrt{\log (t)\log\log (t)}\right],\\
\mathcal H_t(x)&\le \exp\left[C_d\sqrt{\log (t)\log\log (t)}\right],\label{eq:optimized-entropy-curvature}\\
 \log\Vol(B(x,r))&\le \exp\left[C_d\sqrt{\log (r)\log\log (r)}\right].
\end{align*}

\end{theorem}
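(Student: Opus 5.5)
We propose to follow the Hutchcroft--Münch strategy, replacing its averaging step by a pointwise one. The bridge between averaged and pointwise information will be a log-Harnack inequality derived from curvature, which the paper's metadata suggests is central.

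\textbf{Step 1 (log-Harnack inequality).} Non-negative Ollivier curvature gives the $W_1$-contraction $W_1(P_t(x,\cdot),P_t(y,\cdot))\le \dist(x,y)$. From this we would derive a log-Harnack (reverse relative entropy) inequality of the form
\[
P_t(\log f)(x)\le \log P_t f(y)+\frac{C_d\,\dist(x,y)^2}{t}
\]
for positive $f$. Equivalently, we would aim for the relative entropy bound $D\bigl(P_t(x,\cdot)\,\|\,P_t(y,\cdot)\bigr)\lesssim C_d\dist(x,y)^2/t$ whenever $\dist(x,y)\le \sqrt t$. Since degrees are bounded, $P_t(y,z)\ge c_d^{\dist(y,z)}$ for nearby points, so $\log$-ratios are controlled locally, and a Davies-type perturbation argument should upgrade the $W_1$-contraction to this entropy bound. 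The consequence we want is that heat entropies at nearby points are comparable: $|\mathcal H_t(x)-\mathcal H_t(y)|$ is small when $\dist(x,y)\lesssim\sqrt t$, up to a correction controlled by the displacement.

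\textbf{Step 2 (entropy--displacement recursion).} We would prove two pointwise inequalities relating the quantities $D(t):=\sup_x\E_x\dist(x,X_t)^2$ and $H(t):=\sup_x\mathcal H_t(x)$.
\begin{enumerate}[label=(\roman*)]
\item The first is a Varopoulos--Carne/Davies-type estimate giving $\log\Vol(B(x,r))\lesssim H(t)+r^2/t+\log d$ with $t\approx r^2$. Heat mass concentrates on $B(x,r)$ once $r^2\gg D(t)$, so the entropy lower-bounds the log-volume.
\item The second is a displacement bound in the spirit of Hutchcroft--Münch: the curvature coupling shows that $\dist(X_0,X_t)^2$ grows diffusively except for a defect controlled by the entropy increment $H(2t)-H(t)$. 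This is roughly
\[
D(t)\le t\cdot\exp\bigl(O(H(2t)-H(t))\bigr)\quad\text{or}\quad D(t)\le t\,(1+H(2t)-H(t))^{O(1)}.
\]
\end{enumerate}
In the original argument, the increment bound comes from telescoping averaged entropies over a stationary (unimodular) measure. Pointwise, we would instead telescope along dyadic scales $t_k=2^k$ at a fixed root. Step 1 is what transfers the supremum over roots: it makes the function $x\mapsto\mathcal H_t(x)$ Lipschitz at scale $\sqrt t$. As a result, the maximizing root can be replaced by a root where the increment is small, at a controlled cost.

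\textbf{Step 3 (optimization).} Since $H(t)\le \log\Vol(B(x,\sqrt{D(t)}))+O(1)\le C\sqrt{D(t)}\log d$, the total entropy increment over $\log t$ dyadic scales is at most polynomial in $t$. Pigeonholing among $k$ scales then yields some scale where the increment is at most about $(\log H)/k$. The loss in Step 1 adds a factor like $\log\log t$ per scale. Balancing the number of scales $k$ against the per-scale loss gives an exponent of order $\sqrt{\log t\,\log\log t}$ instead of $\sqrt{\log t}$. The three bounds then follow by a bootstrap/induction on $t$: displacement first, then entropy via (i), then volume with $t=r^2$.

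\textbf{Main obstacle.} We expect the main obstacle to be Step 1 together with its use in Step 2: transferring a stationary-averaged telescoping argument to a single worst root. The difficulty is that the sup over roots of the increment is not telescoping. The first thing we would try is a chaining argument over roots: compare $\mathcal H_t(x)$ with $\mathcal H_t(X_s)$ via the log-Harnack bound, averaging over the walk's own position at intermediate times $s$. This uses the walk's law as a substitute for the stationary measure, with the loss $D(s)/t$ absorbed inductively.
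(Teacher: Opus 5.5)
There is a genuine gap at each of the two steps that carry the weight of your plan.

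\textbf{Step 1 has no mechanism.} $W_1$-contraction only says the coupled distance is a supermartingale. The local bounds $P_t(y,z)\ge c_d^{\dist(y,z)}$ give at best $O(1)$ control of log-ratios. Neither tells you the entropy price of making the two walks meet, and a ``Davies-type perturbation'' does not supply it.

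The missing idea is Wang's coupling by change of measure. Take an optimal coupling of $K(a,\cdot)$ and $K(b,\cdot)$, with $K=I+\frac14L$, and modify it so that all of the $K(b,\cdot)$-mass at $b^-$ is paired with $a$. This does not increase the cost, since $\dist(u,b)\le\dist(u,b^-)+1$. The result is a Markovian coupling with $\mathcal A_0\dist\le0$ and $\mathcal A_0\dist^2\le16$, in which the move $(a,b)\to(a,b^-)$ (first coordinate fixed) has rate $\kappa=P(b,b^-)\ge\Pmin$. Then raise only this rate to $\kappa e^{H_s}$, where $H_s=V_s(a,b)-V_s(a,b^-)$ and $V_s=B(\dist^2/(T-s)+\dist/\sqrt{T-s})$. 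This leaves the $X$-path law unchanged and forces coalescence before $T$. Girsanov together with $(\partial_s+\mathcal A^\lambda_s)V_s+\Psi_\kappa(\lambda_s)\le0$ bounds the path entropy by $V_0(x,y)$, and data processing through $Y_{T-}$ gives $\mathrm{LH}(32d)$.

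Note that this yields $\dist^2/t+\dist/\sqrt t$, i.e.\ only order $\dist/\sqrt t$ when $\dist\le\sqrt t$, not the $\dist^2/t$ you aim for. The weaker form suffices.

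Your justification of Step 2(i) also runs the wrong way. Concentration of $P_t(x,\cdot)$ on $B(x,r)$ gives $\mathcal H_t(x)\lesssim\log\Vol(B(x,r))$. The inequality you actually need, $\log\Vol(B(x,R))\le\mathcal H_t(x)+A(R^2/t+R/\sqrt t)$, requires LH. Compare $P_t(x,\cdot)$ with $\overline P_t=\Vol(B)^{-1}\sum_{y\in B}m(y)P_t(y,\cdot)$, which satisfies $\overline P_t\le m/\Vol(B)$ by reversibility, and use convexity of $D$.

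\textbf{Step 2(ii), on which Step 3 rests, is unproven.} You need a pointwise inequality bounding displacement by an entropy increment, and this is exactly where Hutchcroft--M\"unch use stationarity. Making $\mathcal H_t$ Lipschitz in the root does not turn $\sup_x[\mathcal H_{2t}(x)-\mathcal H_t(x)]$ into something that telescopes. Pigeonholing gives one good scale (and at best one good root), not a bound for every $t$ and every $x$.

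The paper never bounds displacement by entropy increments; it runs the implications the other way.
\begin{enumerate}
\item Displacement $\Rightarrow$ entropy. In the identity $\mathcal H_{s+t}(x)=\E_x\mathcal H_t(X_s)+\sum_yP_s(x,y)D(P_t(y,\cdot)\|P_{s+t}(x,\cdot))$, the supremum over roots is handled for free because $\E_x\mathcal H_t(X_s)\le\mathcal H^*(t)$. Convexity plus LH bound the divergence term by $4A(\E\dist(Y,Y')^2/t+\E\dist(Y,Y')/\sqrt t)$ for independent copies $Y,Y'$ of $X_s$. Choosing $s=(t/M)^{1/\beta}$ and iterating gives $\mathcal H^*(t)\lesssim\eta^{-1}M^{1/\beta}t^{\eta}$.
\item Entropy $\Rightarrow$ volume, by the LH inequality above.
\item Volume $\Rightarrow$ displacement, by the Davies bound $P_t(x,y)\le\sqrt{m(y)/m(x)}\,e^{-\zeta_t(\dist(x,y))}$ summed over annuli. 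This uses no curvature.
\end{enumerate}
Each round improves the exponent to $\beta_n=(n+2)/(n+1)$ at cost $\log M_n=O(n\log n)$, because the factor $1/\eta_n=n+2$ enters every round. Choosing $n\approx\sqrt{\log t/\log\log t}$ then gives $\sqrt{\log t\log\log t}$. That factor, not a per-scale loss from Step 1, is the source of the $\log\log t$.
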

Here we state the displacement estimate for the continuous-time random walk because, in our proof, it controls the heat entropy, which in turn controls volume growth. We also obtain a pointwise near-diffusive displacement estimate for the discrete-time random walk.

\begin{corollary}\label{cor:discretedisplacement}
If  $G=(V,E)$  is a possibly infinite, connected graph with non-negative Ollivier-Ricci curvature and  degrees bounded by $d$ ,  then there exists a constant $C_d$ such that, uniformly in
$x\in V$ and $n\geq e^e$,
\[
 \E_x\dist(x,X_n)^2\le n\exp\left[C_d\sqrt{\log (n)\log\log (n)}\right].
\]
\end{corollary}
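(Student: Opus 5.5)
We deduce the corollary from the continuous-time displacement bound in Theorem~\ref{thm:quasi-intro} by comparing the lazy discrete chain with a Poissonized chain. Since $L=2(\widehat P-I)$, we have $P_t=e^{2t(\widehat P-I)}=\sum_{k\ge0}e^{-2t}\frac{(2t)^k}{k!}\widehat P^k$. Thus the continuous-time walk at time $t$ has the same law as the discrete lazy walk $(X_k)$ evaluated at an independent Poisson time $N_{2t}\sim\mathrm{Poi}(2t)$. We will transfer the bound $\E_x\dist(x,X_{N_{2t}})^2\le t\exp[C_d\sqrt{\log t\log\log t}]$ back to the fixed time $n$.

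\textbf{Key step: a one-sided comparison in time.} We want to bound $\E_x\dist(x,X_n)^2$ by the displacement at a random time that is at least $n$. We plan to use the $L^2$-submartingale-type structure of displacement. For $m\ge n$, write
\[
\dist(x,X_n)\le \dist(x,X_m)+\dist(X_n,X_m),
\]
which gives
\[
\E_x\dist(x,X_n)^2\le 2\E_x\dist(x,X_m)^2+2\E_x\dist(X_n,X_m)^2 .
\]
The second term is $\E_x[\phi(X_n,m-n)]$, where $\phi(y,k)=\E_y\dist(y,X_k)^2$. This is itself a displacement from another root, so the pointwise uniformity of the theorem in the root is exactly what we need, but only once we have converted it to discrete time.

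To avoid circularity, we instead do the comparison at the level of the Poisson clock.

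\textbf{Carrying out the Poisson comparison.} Condition on $N_{2t}=k$. By Markov at time $n$ on the event $\{N_{2t}\ge n\}$ (coupling the Poisson clock so that the discrete steps are shared), we get $\dist(x,X_n)\le\dist(x,X_{N})+\dist(X_n,X_N)$. Conditionally on $X_n=y$, the increment $\dist(y,X_{N})$ is the displacement of a lazy walk from $y$ run for $N-n$ steps. Choose $t=n$, so $N=N_{2n}$ has mean $2n$ and $\Prob(N<n)\le e^{-cn}$. On $\{N<n\}$ we use the trivial bound $\dist\le n$, which contributes $n^2e^{-cn}=O(1)$.

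For the increment, note that $N-n$ given $N\ge n$ is not Poisson. We handle this by a thinning trick: write $N_{2n}=N'+N''$ with independent $N'\sim\mathrm{Poi}(n)$ and $N''\sim\mathrm{Poi}(n)$. Then run the walk in the order $n$ deterministic steps followed by $N''$ steps, versus $N'$ steps followed by $N''$ steps. This is awkward. The cleaner route is the reverse triangle inequality applied with the roles swapped:
\[
\dist(x,X_n)\le \dist(x,X_{N''})+\dist(X_{N''},X_n).
\]
Here the first term uses continuous time $n/2$ from $x$. For the second term, on $\{N''\le n\}$ (probability $\ge 1/2-o(1)$, but we need it w.h.p.), the walk after time $N''$ runs $n-N''$ more steps. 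This is again not Poisson.

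\textbf{Main obstacle and fallback.} The difficulty is removing the Poisson randomization of the time without losing more than constants. I would first try to use monotonicity of discrete displacement up to constants. Non-negative Ollivier curvature gives $W_1(\widehat P^k(x,\cdot),\widehat P^k(y,\cdot))\le\dist(x,y)$, and via the standard argument (as in Salez and Hutchcroft--Münch) one gets $\E_x\dist(x,X_k)^2\le C\,\E_x\dist(x,X_m)^2+Cm$-type comparisons for $k\le m$.

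If that fails, the fallback is to average over $k\in[n,3n]$. Since $\Prob(N_{2n}=k)\ge c/\sqrt n$ for $|k-2n|\le\sqrt n$, we obtain
\[
\frac1{\sqrt n}\sum_{|k-2n|\le\sqrt n}\E_x\dist(x,X_k)^2\lesssim n\,e^{C\sqrt{\log n\log\log n}} .
\]
This gives some $k$ in the window satisfying the bound. We then pass from that $k$ to $n$ using the triangle inequality plus the pointwise bound from the root $X_{k'}$, iterating over dyadic scales $n,n/2,\dots$. Each dyadic level costs a factor of $2$ in the triangle-inequality square. Summing the resulting geometric series of scales $n2^{-j}$ yields $n\exp[C_d'\sqrt{\log n\log\log n}]$, since the constant losses are absorbed into the adjusted constant $C_d$. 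The hardest step is controlling the accumulation of these $O(1)$ factors across the $\log n$ scales. I expect to do this by using the bound $\E\dist^2\le$ (scale)$\times e^{(\cdot)}$ at each scale, so that the geometric decay of the scales dominates the accumulated constants.
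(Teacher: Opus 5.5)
Your argument does not close. The main line is abandoned at the point where the increment $\dist(X_n,X_N)$ seems to need a displacement estimate over a non-Poisson number of steps. The ``monotonicity up to constants'' comparison $\E_x\dist(x,X_k)^2\le C\,\E_x\dist(x,X_m)^2+Cm$ is asserted without proof. The dyadic fallback, as written, picks $k$ in a window around $2n$. The residual term $\E_x\dist(X_n,X_k)^2\le\sup_y\E_y\dist(y,X_{k-n})^2$ is then again a discrete displacement at scale $k-n\approx n$. So the recursion never shrinks the scale, and is as circular as the route you rejected.

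The missing observation is elementary. Each lazy step moves graph distance at most one, so pathwise $\dist(X_n,X_m)\le|n-m|$, and the increment never needs a displacement estimate at all. This works provided the Poisson time is centred at $n$ rather than $2n$. You already wrote down the right setup in your ``cleaner route'': take $N\sim\mathrm{Poi}(n)$ independent of the walk, so that under $\Prob_x$ the variable $X_N$ has law $P_{n/2}(x,\cdot)$. Then
\[
\E_x\dist(x,X_n)^2\le 2\,\E_x\dist(x,X_N)^2+2\,\E|N-n|^2 ,
\]
where the first term is the continuous-time displacement at time $n/2$, and $\E|N-n|^2=\operatorname{Var}(N)=n$ is already diffusive. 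Theorem~\ref{thm:quasi-intro} at time $n/2$ gives the claim for $n\ge 2e^e$. The bounded range $e^e\le n<2e^e$ is absorbed using $\dist(x,X_n)\le n$. This is exactly the paper's proof. Your choice $t=n$ (mean $2n$) makes $\E|N-n|^2\approx n^2$, which is what pushed you toward one-sided comparisons. Your fallback could also be repaired, though it is far heavier than needed. Take the Poisson mean to be $n/2$ and choose $k=k(x)$ within $O(\sqrt n)$ of $n/2$ such that $\E_x\dist(x,X_k)^2$ is at most a constant times the continuous-time bound. Then recurse on $F(m):=\sup_y\E_y\dist(y,X_m)^2$. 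The scales halve at each step, and you lose only a factor $\log n$, which is absorbed into the constant.
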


In fact, we prove the above property under the following log-harnack inequality and prove that Ollivier-Ricci curvature supplies log-Harnack. For probability measures \(\mu,\nu\) on the same measurable space, their
relative entropy (Kullback--Leibler divergence; see~\cite{kullbackleibler}) is
\[
        D(\mu\|\nu)
        :=
        \begin{cases}
        \displaystyle\int\log\!\left(\frac{d\mu}{d\nu}\right)d\mu,
            &\mu\ll\nu,\\[2mm]
        +\infty,&\text{otherwise}.
        \end{cases}
\]

\begin{definition}\label{def:LH}
For $A<\infty$, we say that a Markov kernel $P$ satisfies $\mathrm{LH}(A)$ if its associated heat semigroup $P_t$ satisfies
\begin{equation*}
 D(P_t(x,\cdot)\|P_t(y,\cdot))
 \le A\left(\frac{\dist(x,y)^2}{t}
 +\frac{\dist(x,y)}{\sqrt t}\right)
\end{equation*}
for every $x,y\in V$ and $t>0$.
\end{definition}

The condition \(\mathrm{LH}(A)\) turns out to be well suited to this
problem. Our argument is based on a self-improving loop:
\[
    \text{displacement}
    \Longrightarrow
    \text{entropy}
    \Longrightarrow
    \text{volume growth}
    \Longrightarrow
    \text{improved displacement}.
\]
At each iteration, the power exponent improves, while the corresponding
constant increases. Optimizing the number of iterations against this
growth of the constants yields the following subexponential estimates.
\begin{theorem}\label{thm:quasi-intro-lh}
If  $G=(V,E)$  is a possibly infinite, connected graph and its random walk kernel $P$ satisfies $\mathrm{LH}(A)$. Then, there exists a constant $C_A$ such that, uniformly in
$x\in V$  and $r,t\geq e^e$,
\begin{align*}
 \E_x\dist(x,X_t)^2&\le t\exp\left[C_A\sqrt{\log (t)\log\log (t)}\right],\\
\mathcal H_t(x)&\le \exp\left[C_A\sqrt{\log (t)\log\log (t)}\right],\label{eq:optimized-entropy}\\
 \log\Vol(B(x,r))&\le \exp\left[C_A\sqrt{\log (r)\log\log (r)}\right].
\end{align*}
\end{theorem}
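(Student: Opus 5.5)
The plan is to run the self-improving loop displacement $\Rightarrow$ entropy $\Rightarrow$ volume $\Rightarrow$ displacement. Each pass improves a power exponent at the cost of a multiplicative constant, and at the end the number of passes is optimized against $t$. Throughout, $m(z)=\deg(z)\in[1,d]$; since $G$ is connected, $d$ is finite, so $|B(x,r)|\le d^{r+1}$. This gives the crude starting bounds $\E_x\dist(x,X_t)^2\le C(t^2+t)$ (the number of jumps is Poisson$(t)$) and $\mathcal H_t(x)\le \log \Vol(B)+\dots\le C t$.

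\emph{Step 1 (displacement $\Rightarrow$ entropy).} This uses $\mathrm{LH}(A)$ and a mixture/chain-rule argument. Since $\mathcal H_{t}(x)$ measures how spread out $P_t(x,\cdot)$ is, and $P_{t+s}(x,\cdot)=\sum_y P_s(x,y)P_t(y,\cdot)$, the following identity holds:
\[
\mathcal H_{t+s}(x)-\sum_y P_s(x,y)\mathcal H_t(y)
=\sum_y P_s(x,y)\,D\bigl(P_t(y,\cdot)\,\|\,P_{t+s}(x,\cdot)\bigr)
\le \sum_{y,y'}P_s(x,y)P_s(x,y')D\bigl(P_t(y,\cdot)\|P_t(y',\cdot)\bigr),
\]
where the last step is convexity of $D$ in its second argument. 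Applying $\mathrm{LH}(A)$ with $\dist(y,y')\le \dist(x,y)+\dist(x,y')$ bounds the increment by $A\bigl(4M(s)/t+2\sqrt{M(s)/t}\bigr)$, where $M(s):=\sup_x\E_x\dist(x,X_s)^2$. Choosing $s=t$ and summing dyadically, a displacement bound $M(t)\le K t^{1+\alpha}$ gives $\sup_x\mathcal H_t(x)\le C A K t^{\alpha/2}$ up to constants. The sup over $y$ is what forces pointwise-uniform bounds throughout.

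\emph{Step 2 (entropy $\Rightarrow$ volume).} Chebyshev on the displacement puts mass $\ge 1/2$ of $P_t(x,\cdot)$ in $B(x,r)$ with $r^2\approx 2M(t)$. Entropy then lower-bounds how concentrated the measure is, but to get volume we need the reverse inequality. For that I would use $\mathrm{LH}$ again: for $y\in B(x,\sqrt t)$, $D(P_t(x,\cdot)\|P_t(y,\cdot))\le 2A$. Summing $P_t(y,z)$ over $y$ in the ball and using reversibility, $\sum_y m(y)P_t(y,z)=m(z)$ (so $P_t(\cdot,z)$ is bounded by $m(z)/m(y)$), together with a data-processing/Pinsker-type bound, shows that a typical $z$ has $P_t(x,z)/m(z)\gtrsim e^{-C(A+\mathcal H)}/\Vol$. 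The cleaner route is the Donsker--Varadhan inequality with the test function $-\log(P_t(y,z)/m(z))$, which yields $\log \Vol(B(x,\sqrt t))\lesssim \sup\mathcal H_t + A$. Hence $\log\Vol(B(x,r))\le C A K r^{\alpha}$.

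\emph{Step 3 (volume $\Rightarrow$ displacement).} This is the main obstacle. I would try a Davies/Carne--Varopoulos-type estimate: $P_t(x,y)\le \sqrt{m(y)/m(x)}\exp(-\dist(x,y)^2/(Ct))$ for $\dist\le t$, and Poisson tails beyond. Summing over shells with $\Vol(B(x,r))\le \exp(CAKr^{\alpha})$ gives $\E_x\dist^2\lesssim t\cdot (CAK)^{2/(2-\alpha)} t^{\alpha/(2-\alpha)}$. The exponent maps $\alpha\mapsto \alpha/(2-\alpha)$, which is roughly $\alpha/2$ for small $\alpha$, so it halves each round while $K$ becomes roughly $(CA K)^{2}$. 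This squaring is where the constants blow up.

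\emph{Step 4 (optimization).} After $k$ rounds the bound is $\alpha_k\approx 2^{-k}$ and $\log K_k\le C^{k}$-ish (from squaring), so
\[
M(t)\le t\exp\bigl(C^k+2^{-k}\log t\bigr).
\]
The constant growth must be controlled to be only polynomial, say $\log K_k\lesssim k\,2^{k}\cdot$const. After rescaling it is more natural to track $\alpha_k\approx 1/k$ with constants $e^{Ck\log k}$, which gives $M(t)\le t\exp(Ck\log k+\log t/k)$. Choosing $k\approx\sqrt{\log t/\log\log t}$ yields $\exp(C\sqrt{\log t\log\log t})$. The delicate point is bookkeeping the iteration so that constants grow like $k^{Ck}$ rather than doubly exponentially. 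I would achieve this by using a softer Step 3 that improves $\alpha$ additively (e.g. $\alpha\to\alpha-\alpha^2/2$) with only a factor $C/\alpha$ loss. Finally, the entropy and volume bounds of the theorem follow by feeding the optimized displacement bound into Steps 1 and 2 once more.
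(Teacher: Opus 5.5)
There is a genuine gap. As you set them up, Steps 1 and 2 do not produce a self-improving loop. The remedy is not a softer Step 3; it is to optimize the free time parameters in Steps 1 and 2.

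With $s=t$ in Step 1, the increment is $A\bigl(4Kt^{\alpha}+2\sqrt K\,t^{\alpha/2}\bigr)$, whose dominant term is $4AKt^{\alpha}$. Dyadic summation therefore yields only $\sup_x\mathcal H_t(x)\lesssim \frac{AK}{\alpha}t^{\alpha}$, not $t^{\alpha/2}$. Step 2 with $R=\sqrt t$ then gives $\log\Vol(B(x,r))\lesssim r^{2\alpha}$. Step 3 returns at best $\E_x\dist(x,X_t)^2\lesssim t^{1/(1-\alpha)}$, which is worse than the input exponent $1+\alpha$. Even if you grant your claimed $t^{\alpha/2}$, the composite map $\alpha\mapsto\alpha/(2-\alpha)$ fixes $\alpha=1$. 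That is exactly the trivial starting bound $\E_x\dist(x,X_t)^2\le 2t^2$, so the iteration never moves.

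The missing ideas are the paper's two time choices.
\begin{itemize}
\item In Step 1, given $\sup_x\E_x\dist(x,X_s)^2\le Ms^{\beta}$, advance time in increments $s=(t/M)^{1/\beta}$. Then $Ms^{\beta}/t=1$ and each step costs at most $8A$ (Lemma~\ref{lem:entropy-increment-sharp}). Counting steps (Lemma~\ref{lem:scaled-iteration}) gives $\mathcal H^*(t)\le \frac{C}{\eta}M^{1/\beta}t^{\eta}$ with $\eta=1-1/\beta$.
\item In Step 2, keep $t$ free in $\log\Vol(B(x,R))\le\mathcal H_t(x)+A(R^2/t+R/\sqrt t)$ and take $t=(R^2/H)^{1/(1+\eta)}$. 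This gives exponent $\gamma=2\eta/(1+\eta)$ and constant $H^{1/(1+\eta)}$.
\end{itemize}
Your Step 3 is already the paper's. Writing $\beta=1+\alpha$, the two optimizations together give $\alpha\mapsto\alpha/(1+\alpha)$, i.e.\ $\beta_n=(n+2)/(n+1)$. With only one of them the composite map is exactly $\alpha\mapsto\alpha$, so both are needed.

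The same choices also settle your constant bookkeeping. The constant passes through the exponents $1/\beta_n$, $1/(1+\eta_n)$ and $2/(2-\gamma_n)$, whose product is $(n+1)/(n+2)<1$. Hence $M_{n+1}+1\le K_A(n+2)(M_n+1)^{(n+1)/(n+2)}$ and $\log M_n\lesssim n\log n$, with no squaring.

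A smaller point: connectedness does not give bounded degree. Here it comes from $\mathrm{LH}(A)$ itself via Proposition~\ref{prop:LH-degree}, and you need it for the Moore bound and the Davies prefactor.
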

Following this thread, we prove that diffusive displacement implies volume doubling under $\mathrm{LH}(A)$. 

\begin{theorem}[Diffusive displacement implies doubling]
\label{prop:retention-doubling}
If  a reversible Markov chain $P$  satisfies $\mathrm{LH}(A)$ and for some $\alpha>0$ and $K<\infty$,
\begin{equation}\label{eq:AlphaDisplacement}
    \sup_x\E_x\dist(x,X_t)^\alpha\le Kt^{\alpha/2},\qquad t\ge1,
\end{equation}
then  $$\Vol(B(x,2r))\le C_V\Vol(B(x,r)),\quad x\in V,\ r\ge 0$$ holds,
where $C_V$ depends only on $A,K,\alpha$.
\end{theorem}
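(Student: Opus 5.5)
Fix $x\in V$ and $r\ge1$; small $r$ is handled by bounded geometry and adjustment of $C_V$. Set $t:=r^2$. The idea is to compare the heat kernels $P_t(x,\cdot)$ and $P_t(y,\cdot)$ for a point $y$ at distance about $\tfrac32 r$ from $x$, and use mass conservation to transfer volume.

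\textbf{Step 1: localisation.} By \eqref{eq:AlphaDisplacement} and Markov's inequality,
\[
P_t(z,B(z,\lambda r)^c)\le K\lambda^{-\alpha}
\]
for every $z$. Choosing $\lambda$ a large constant depending only on $K$ and $\alpha$, each $P_t(z,\cdot)$ puts mass at least $1/2$ on $B(z,\lambda r)$.

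\textbf{Step 2: entropy comparison.} By $\mathrm{LH}(A)$, whenever $\dist(x,y)\le Mr$,
\[
D\bigl(P_t(y,\cdot)\,\|\,P_t(x,\cdot)\bigr)\le A(M^2+M)=:D_0,
\]
with $D_0$ a constant. Apply the data-processing inequality to the indicator of a set $S$, writing $p=P_t(y,S)$ and $q=P_t(x,S)$. If $p\ge1/2$, then
\[
D_0\ge p\log(p/q)-\log2,
\]
so
\[
q\ge \tfrac12 e^{-2(D_0+\log 2)}=:c_0.
\]
In words, any set carrying half the mass of $P_t(y,\cdot)$ carries a definite fraction $c_0$ of the mass of $P_t(x,\cdot)$.

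\textbf{Step 3: converting mass to volume.} I need a pointwise bound of the form
\[
P_t(x,z)\le \frac{C}{\Vol(B(x,r))}\,m(z)\quad\text{on } B(x,Cr).
\]
I would get this via Cauchy--Schwarz and reversibility:
\[
\frac{P_{2t}(x,x)}{m(x)}=\sum_z \frac{P_t(x,z)^2}{m(z)}
\]
is controlled by an on-diagonal estimate. The doubling constant is exactly what controls that estimate, so I will close the argument with a self-improving scheme instead. For any set $S$, Cauchy--Schwarz gives
\[
P_t(x,S)^2\le \Vol(S)\,\frac{P_{2t}(x,x)}{m(x)}.
\]
Combined with Step 1, it also gives
\[
\frac{P_{2t}(x,x)}{m(x)}\ge \frac{1}{4\Vol(B(x,\lambda r))}.
\]

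\textbf{Step 4: comparing diagonals.} Next I compare the on-diagonal values at $x$ and $y$ via log-Harnack. This is the main obstacle, because relative entropy bounds give only integrated control, not pointwise control. The plan is to use the semigroup property together with Jensen's inequality:
\[
\log\frac{P_{2t}(x,x)}{m(x)}\ge \log\frac{P_{2t}(y,x)}{m(x)}-D_0 .
\]
Here I write $P_{2t}(\cdot,x)/m(x)=P_t f$ with $f=P_t(\cdot,x)/m(x)$, and use the standard entropy/Harnack duality
\[
P_t\log f(x)\le \log P_t f(y)+D\bigl(P_t(x,\cdot)\,\|\,P_t(y,\cdot)\bigr).
\]
Chaining this in both directions, I expect to obtain
\[
\Vol(B(x,r))\asymp \frac{m(x)}{P_{2t}(x,x)}
\]
up to constants. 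The upper bound on $\Vol(B(x,r))$ comes from the Cauchy--Schwarz step above. The lower bound should come from log-Harnack spreading: $P_{2t}(\cdot,x)/m(x)$ is comparable across $B(x,r)$, and summing against $m$ gives
\[
\Vol(B(x,r))\cdot\frac{P_{2t}(x,x)}{m(x)}\lesssim \sum_z P_{2t}(z,x)\frac{m(z)}{m(x)}=1.
\]

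\textbf{Step 5: conclusion.} Once
\[
\Vol(B(x,r))\asymp \frac{m(x)}{P_{2r^2}(x,x)}
\]
holds uniformly, doubling reduces to comparing $P_{2t}(x,x)$ with $P_{8t}(x,x)$. That comparison follows from the same localisation and spreading estimates at scale $2r$.

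I expect the pointwise spreading in Step 4 to be the delicate point. If the Jensen-based duality is too weak, I would fall back on a mass-counting argument. Consider $N$ points $y_i\in B(x,2r)$ that are pairwise $3\lambda r$-separated. Since their balls $B(y_i,\lambda r)$ are disjoint, Step 2 forces
\[
P_t(x,\cdot)\ \text{to give mass}\ \ge c_0\ \text{to each ball } B(y_i,\lambda r),
\]
so $N\le 1/c_0$. Then $B(x,2r)$ is covered by boundedly many balls of radius $3\lambda r$. To bound the volumes of these balls I would still use the on-diagonal comparison, applied only at points a bounded number of scales apart.
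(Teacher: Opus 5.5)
\textbf{There is a genuine gap in Steps 3--5.} Your Steps 1--2 already contain the decisive estimate, but the route afterwards does not close. Steps 3--5 rest on the two-sided comparison $\Vol(B(x,r))\asymp m(x)/P_{2r^2}(x,x)$.

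\begin{itemize}
\item Cauchy--Schwarz gives only $\Vol(B(x,\lambda r))\ge m(x)/(4P_{2t}(x,x))$. This is a lower bound on volume, not the upper bound you label it as.
\item The other half needs $P_{2t}(z,x)\gtrsim P_{2t}(x,x)$ pointwise for $z\in B(x,r)$. That is a parabolic-Harnack-type statement, and $\mathrm{LH}(A)$ does not supply it. With $f=h_t^x$, the entropy/Harnack duality yields only $P_{2t}(z,x)/m(x)\ge e^{-D_0}e^{-\mathcal H_t(x)}$. Jensen gives $P_{2t}(x,x)/m(x)\ge e^{-\mathcal H_t(x)}$, which points the same way. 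Comparing the two would require the on-diagonal upper bound $P_{2t}(x,x)/m(x)\lesssim e^{-\mathcal H_t(x)}$, and that is not available.
\item For the same reason, your displayed inequality $\log\frac{P_{2t}(x,x)}{m(x)}\ge\log\frac{P_{2t}(y,x)}{m(x)}-D_0$ does not follow from the duality, since Jensen goes the wrong way.
\item The comparison of $P_{2t}(x,x)$ with $P_{8t}(x,x)$ in Step 5 needs the same missing input.
\item The fallback covering argument controls only the number of balls, not their volumes. Also, for $\lambda>4/3$ a $3\lambda r$-separated subset of $B(x,2r)$ has at most one point, so the time scale there would have to be $t\ll r^2$.
\end{itemize}

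\textbf{The missing idea: invariance of $m$.} No pointwise heat-kernel information is needed, because $mP_t=m$ converts your Step 2 directly into a volume bound.

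\begin{enumerate}
\item Take $\lambda=(2K)^{1/\alpha}$ and $t=(r/\lambda)^2$, so $t\ge1$ when $r\ge\lambda$. By Markov's inequality, $P_t(x,B(x,r))\ge 1/2$.
\item Every $y\in B(x,2r)$ satisfies $\dist(x,y)\le 2\lambda\sqrt t$. So $\mathrm{LH}(A)$ gives $D(P_t(x,\cdot)\|P_t(y,\cdot))\le H_0:=A(4\lambda^2+2\lambda)$.
\item Run your data-processing argument with $S=B(x,r)$ and the roles of the two points exchanged. This is harmless, since $\mathrm{LH}(A)$ bounds both orderings. It gives $P_t(y,B(x,r))\ge c_0$ uniformly in $y\in B(x,2r)$, with your $c_0$ and $D_0$ replaced by $H_0$.
\item Detailed balance together with $\sum_z P_t(y,z)=1$ gives $mP_t=m$. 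Hence
\[
\Vol(B(x,r))=\sum_{y\in V}m(y)\,P_t(y,B(x,r))\ge c_0\,\Vol(B(x,2r)).
\]
\item The bounded range $r<\lambda$ follows from the degree bound $d_A$ and the neighbour ratio $m(x)/m(y)\le d_A$, both implied by $\mathrm{LH}(A)$.
\end{enumerate}

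This is exactly the paper's argument, packaged there as its ball-deficit proposition.
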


Log-harnack inequalities are widely researched on manifolds. 
  Dimension-free and logarithmic Harnack
inequalities originate in the work of Wang and were subsequently developed
through coupling methods
\cite{Wang1997,ArnaudonThalmaierWang2006,RocknerWang2010,BakryGentilLedoux2015}.

\begin{theorem}\label{thm:entropy-intro}
If  $G=(V,E)$  is a possibly infinite, connected graph with non-negative Ollivier-Ricci curvature and  degrees bounded by $d$, then $G$ satisfies $\mathrm{LH}(32d)$, that is,
\begin{equation*}
 D(P_t(x,\cdot)\|P_t(y,\cdot))
 \le 32d\left(\frac{\dist(x,y)^2}{t}
 +\frac{\dist(x,y)}{\sqrt t}\right).
\end{equation*}
Equivalently, for bounded $f>0$,
\begin{equation*}
     P_t\log f(x)\le\log P_tf(y)
 +32d\left(\frac{\dist(x,y)^2}{t}
 +\frac{\dist(x,y)}{\sqrt t}\right).
\end{equation*}
\end{theorem}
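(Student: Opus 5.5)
The plan is to reduce to neighbouring vertices and then control the relative entropy between the two heat kernels by a coupling of the continuous-time walks. Non-negative Ollivier curvature provides a Markovian coupling whose distance process is a supermartingale. First, the reduction. For an edge $x\sim y$ we aim to show
\[
D(P_t(x,\cdot)\|P_t(y,\cdot))\le 16d\Big(\frac1t+\frac1{\sqrt t}\Big)
\]
or a similar bound. For general $x,y$ we would like to chain along a geodesic $x=x_0,\dots,x_k=y$. Relative entropy is not a metric, so we instead work with the dual formulation. By the Donsker--Varadhan variational formula, $\mathrm{LH}$ is equivalent to
\[
P_t\log f(x)\le \log P_tf(y)+\Phi(\dist(x,y),t)
\]
for bounded $f>0$. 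This settles the stated equivalence. The functional form still does not chain directly either. So we prove the inequality for arbitrary pairs at once, with the coupling argument below.

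The core construction is a coupling $(X_s,Y_s)$ with $X_0=x$, $Y_0=y$. When $X_s\neq Y_s$, the pair jumps according to an optimal $W_1$-coupling of $\widehat P(X_s,\cdot)$ and $\widehat P(Y_s,\cdot)$. Curvature $\ge 0$ makes $\rho_s=\dist(X_s,Y_s)$ a supermartingale. Bounded degree is used for two things. First, it gives a uniform positive probability, at least of order $1/d$, that $\rho$ changes at each jump. Second, the lazy laws put mass at least $1/(2d)$ on each neighbour, which lets us choose the optimal coupling so that $\rho$ has genuine fluctuations. This gives quadratic variation of order $s/d$. The standard gambler's ruin argument for a nonnegative integer supermartingale with jumps bounded by $2$ and variance rate $\gtrsim 1/d$ then yields
\[
\Prob(\tau>t)\lesssim \sqrt d\,\dist(x,y)/\sqrt t ,
\]
where $\tau$ is the coupling time. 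This alone gives a total-variation bound, not entropy.

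To upgrade to entropy, we replace the plain coupling by a Girsanov-type change of measure on path space. We write $P_t(y,\cdot)$ as the law of $Y_t$ and construct a process $\tilde X$ started at $x$ whose law at time $t$ equals $P_t(x,\cdot)$. The process $\tilde X$ follows $Y$'s jumps under a tilted intensity that pushes $\dist(\tilde X,Y)$ down at rate of order $\rho/(t-s)$, in the spirit of the coupling-by-change-of-measure proofs of Wang's log-Harnack inequality \cite{ArnaudonThalmaierWang2006,RocknerWang2010}. By the data-processing inequality, $D(P_t(x,\cdot)\|P_t(y,\cdot))$ is at most the path-space entropy. That entropy is an integral of $\sum_j \lambda_j\,h(\tilde\lambda_j/\lambda_j)$ with $h(u)=u\log u-u+1$. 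The original jump rates are at least $1/d$ per neighbour. With a drift of size $\rho_s/(t-s)$ this gives an integrand $\lesssim d\,\rho_s^2/(t-s)^2$ when the drift is small, and $\lesssim \rho_s/(t-s)\cdot\log$ when it is large. Integrating along the controlled decay of $\rho$ produces the two terms $\dist^2/t$ (diffusive regime) and $\dist/\sqrt t$ (the discrete regime, where rates cannot exceed those available). The constant $32d$ comes from tracking these factors.

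The main obstacle is this last step. The steering must be realizable by a valid jump intensity: we can only reweight existing neighbour jumps, and the non-negative curvature coupling must guarantee that a distance-decreasing move exists with rate $\gtrsim 1/d$. The entropy cost must also stay of order $\dist/\sqrt t$ rather than blowing up logarithmically near the terminal time. If steering to exact meeting by time $t$ is too costly, I would first try a hybrid. Steer only until time $t/2$, reducing the distance to $O(1)$. Then handle the residual distance with the one-step bound $D(P_s(x',\cdot)\|P_s(y',\cdot))\lesssim d/\sqrt s$ for neighbours, obtained from the variance/supermartingale estimate above.
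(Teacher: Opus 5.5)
Your overall strategy is the same as the paper's: a curvature coupling, a Girsanov tilt on path space that steers one coordinate onto the other before the terminal time $T$, and data processing at the terminal point. However, you leave open the two steps that actually constitute the proof (you call the first ``the main obstacle''), and the fallback you propose does not close.

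\emph{The solo geodesic jump.} The tilt may act only on jumps in which the steered coordinate moves alone, one step along a geodesic towards the other. Such a jump must already be present in the reference coupling with rate bounded below. Otherwise the change of measure is not absolutely continuous, or its cost is not $O(d)$. Moreover, the untouched coordinate's full path law is no longer preserved, and that preservation is exactly what identifies the law of the terminal point under the tilted measure $\mathbf Q$ with $P_T(x,\cdot)$. An optimal coupling of $\widehat P(a,\cdot)$ and $\widehat P(b,\cdot)$ need not contain such a move with mass bounded below. On a simple graph the marginals give no slack to insert one, since $\widehat P(a,a)=\tfrac12=\widehat P(b,V\setminus\{b\})$. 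The paper's device is to run the same chain $L=P-I$ as rate-$4$ rings of the doubly lazy kernel $K=\tfrac12(I+\widehat P)$. Then every coupling of $K(a,\cdot)$ and $K(b,\cdot)$ has $\pi(a,b)\ge\tfrac12$. Mass can therefore be rerouted from $(a,b)$ and $(u,b^-)$, $u\ne a$, to $(a,b^-)$ and $(u,b)$ without increasing cost, because $\dist(u,b)\le\dist(u,b^-)+1$. This yields a reference generator $\mathcal A_0$ with $\mathcal A_0\dist\le0$ and $\mathcal A_0\dist^2\le16$, together with a solo jump $(a,b)\to(a,b^-)$ of rate $\kappa(a,b)=P(b,b^-)\ge\Pmin\ge1/d$.

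\emph{The entropy accounting and coalescence.} Integrating $d\rho_s^2/(t-s)^2$ ``along the controlled decay of $\rho$'' is not an argument. You must control the distance process under the tilted dynamics, and you must show $\mathbf Q(\tau<T)=1$; without the latter the data-processing step fails. The natural certificate for a drift of size $\rho/(T-s)$ is the Lyapunov function $B\dist^2/u$ with $u=T-s$, and it does not close. The $O(1)$ quadratic variation of the integer distance leaves an uncompensated term $16B/u$, whose integral is of order $B\log T$. That fluctuation is the true source of the $\dist/\sqrt T$ term, not a cap on rates: rates may be raised arbitrarily, and one only pays entropy. The paper instead takes $V_s=B(\dist^2/u+\dist/\sqrt u)$ with $B=32/\Pmin$, and $\lambda=\kappa(e^H-1)$ where $H=V_s(a,b)-V_s(a,b^-)$. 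This gives the exact identity $\lambda\{V_s(a,b^-)-V_s(a,b)\}+\Psi_\kappa(\lambda)=\kappa(1+H-e^H)\le-\tfrac{\Pmin}{2}H^2$. Since $H^2\ge B^2(m^2/u^2+2m/u^{3/2}+1/u)$, this absorbs $(\partial_s+\mathcal A_0)V_s$. Dynkin's formula plus the Girsanov entropy identity then give, simultaneously, $D(\mathbf Q\|\mathbf P^0)\le V_0(x,y)$ and $\mathbf Q(\tau>t)\le (V_0/B)\sqrt{T-t}\to0$ as $t\uparrow T$.

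Finally, your hybrid fallback is circular. Gambler's-ruin estimates control total variation, not relative entropy. The neighbour bound $D\lesssim d/\sqrt s$ that you would invoke for the residual distance is the statement itself at distance one.
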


\medskip
\noindent\textbf{Proof architecture.}
The paper is organized around a primitive log-Harnack inequality rather
than a curvature condition. The manuscript contains exactly three substantive implications:
\begin{equation*}\label{eq:dependency-map}
\begin{gathered}
 \mathrm{LH}(A)\Longrightarrow
 \left\{\begin{array}{c}
 \text{pointwise near-diffusive displacement},\\[-1mm]
 \text{subexponential volume growth};
 \end{array}\right.\\[1mm]
 \mathrm{LH}(A)+\text{continuous diffusive displacement}
 \Longrightarrow \text{Volume Doubling};\\[1mm]
 \text{nonnegative Ollivier curvature with }\deg\le d
 \Longrightarrow \mathrm{LH}(Cd).
\end{gathered}
\end{equation*}
The first are proved by the bootstrap of displacement-entropy-volume loop  and the second is an application of  the method we use in the first part; the third is proved by Wang's Coupling Method.

\section{Proof of Theorem \ref{thm:quasi-intro-lh}}
Throughout this section, we assume  $G=(V,E)$ be a connected, locally finite  graph, let $P$ be its random-walk kernel, and take $m(x)=\deg(x)$. 

First, we introduce some basic propositions of relative entropy and log-Harnack inequality $LH(A)$.

\begin{proposition}[Data processing]\label{prop:data-processing}
Let \(\mu,\nu\) be probability measures and let \(T\) be measurable.
Then
\begin{equation*}
        D(T_\#\mu\|T_\#\nu)\le D(\mu\|\nu).
\end{equation*}
\end{proposition}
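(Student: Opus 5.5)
The plan is to prove the inequality first for finitely supported \(\mu,\nu\) on a countable space (the setting actually used in this paper), and then to pass to general measures by approximation. For countable spaces the argument runs as follows. If \(D(\mu\|\nu)=+\infty\) there is nothing to prove, so assume \(\mu\ll\nu\). Then \(T_\#\mu\ll T_\#\nu\): if \(T_\#\nu(B)=\nu(T^{-1}B)=0\), then \(\mu(T^{-1}B)=0\). Write \(f=d\mu/d\nu\). The pushforward density is the conditional expectation
\[
g(b):=\frac{dT_\#\mu}{dT_\#\nu}(b)=\frac{\mu(T^{-1}\{b\})}{\nu(T^{-1}\{b\})}=\E_\nu[f\mid T=b].
\]
Using the convex function \(\phi(s)=s\log s\) (with \(\phi(0)=0\)), we have
\[
D(\mu\|\nu)=\int \phi(f)\,d\nu, \qquad D(T_\#\mu\|T_\#\nu)=\int\phi(g)\,dT_\#\nu=\int \phi(\E_\nu[f\mid T])\,d\nu .
\]

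The key step is the conditional Jensen inequality \(\phi(\E_\nu[f\mid T])\le \E_\nu[\phi(f)\mid T]\). Integrating it against \(\nu\) and applying the tower property gives the claim immediately. In the discrete case this is just the log-sum inequality applied fiberwise: for each \(b\),
\[
\sum_{a\in T^{-1}b}\mu(a)\log\frac{\mu(a)}{\nu(a)}\ \ge\ \mu(T^{-1}b)\log\frac{\mu(T^{-1}b)}{\nu(T^{-1}b)}.
\]
Summing over \(b\) then finishes the argument.

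The only genuine obstacle is integrability, since \(\phi(f)\) can be negative, and we must make sure no \(\infty-\infty\) arises. To handle this, use \(\phi\ge -1/e\) and the fact that \(\nu\) is a probability measure. The negative part of \(\phi(f)\) is then integrable, so both integrals are well defined in \((-\infty,+\infty]\), and Jensen's inequality applies to the negative parts as well.

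For general measurable spaces the same conditional-expectation argument works verbatim. Here \(g\circ T=\E_\nu[f\mid\sigma(T)]\) by the definition of the pushforward density, and conditional Jensen holds for convex functions bounded below. An alternative route is the Donsker--Varadhan variational formula
\[
D(\mu\|\nu)=\sup_h\Bigl\{\int h\,d\mu-\log\int e^h\,d\nu\Bigr\}.
\]
Test functions of the form \(h\circ T\) then recover exactly the supremum defining \(D(T_\#\mu\|T_\#\nu)\), so that supremum is taken over a smaller class and the inequality follows.
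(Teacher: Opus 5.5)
Your proposal is correct and follows essentially the same route as the paper: you identify the pushforward density composed with $T$ as $\E_\nu[f\mid\sigma(T)]$, then apply conditional Jensen to the convex function $u\mapsto u\log u$. The announced finite-support-plus-approximation scheme is not needed, since, as you observe yourself, the conditional-expectation argument works verbatim on general measurable spaces.
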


A more general data-processing inequality for $\alpha$-R\'enyi divergences is proved in \cite[Theorem 9]{vanErvenHarremoes2014}; the Kullback–Leibler case may also be obtained by taking the limit  $\alpha_n\to 1$. We include a direct proof for completeness. 

\begin{proof}
It is enough to consider \(D(\mu\|\nu)<\infty\).  Set
\(f=d\mu/d\nu\), let \(h=dT_\#\mu/dT_\#\nu\), and put
\(\mathcal G=\sigma(T)\).  First, \(T_\#\mu\ll T_\#\nu\).  For every
bounded measurable \(k\) on the target space,
\[
        \int k(T)f\,d\nu
        =\int k\,dT_\#\mu
        =\int kh\,dT_\#\nu
        =\int k(T)h(T)\,d\nu.
\]
Thus
\[
        h(T)=\E_\nu[f\mid\mathcal G]
        \quad\nu\text{-almost everywhere}.
\]
For \(\varphi(u)=u\log u\), conditional Jensen gives
\[
\begin{aligned}
 D(T_\#\mu\|T_\#\nu)
 &=\int\varphi(h(T))\,d\nu
  =\int\varphi\!\left(\E_\nu[f\mid\mathcal G]\right)d\nu\\
 &\le\int\varphi(f)\,d\nu
  =D(\mu\|\nu).
\end{aligned}
\]
\end{proof}
It is interesting to note that $\mathrm{LH}(A)$ gives a upper bound of  degree of $G$.
\begin{proposition}
\label{prop:LH-degree}
Assume $P$ is a reversible Markov chain satisfying $\mathrm{LH}(A)$, then
\begin{equation}\label{eq:LH-degree-bound}
 \deg(x)\le d_A:=\exp\{e(2A+\log2)\}, \text{ for every } x\in V.
\end{equation}
and \begin{equation}\label{eq:ratio}
    \frac{m(x)}{m(y)}\leq d_A, \text{ for every } x\sim y.
\end{equation}
\end{proposition}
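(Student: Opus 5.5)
The plan is to test $\mathrm{LH}(A)$ at time $t=1$ on a pair of neighbours, reducing the relative entropy to a two-point distribution via Proposition~\ref{prop:data-processing}. If $\dist(x,y)=1$, then $\mathrm{LH}(A)$ at $t=1$ gives
\[
D\bigl(P_1(y,\cdot)\,\|\,P_1(x,\cdot)\bigr)\le A(1+1)=2A .
\]
We use it in this order, with $y$ in the first slot. This is legitimate because $\mathrm{LH}(A)$ holds for all ordered pairs.

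Next apply Proposition~\ref{prop:data-processing} with $T=\1_{\{y\}}$. This reduces to the binary divergence between $\mathrm{Bernoulli}(p)$ and $\mathrm{Bernoulli}(q)$, where
\[
p:=P_1(y,y),\qquad q:=P_1(x,y).
\]
For this binary divergence I will use the elementary bound
\[
p\log\tfrac pq+(1-p)\log\tfrac{1-p}{1-q}
= -h(p)+p\log\tfrac1q+(1-p)\log\tfrac1{1-q}
\ge p\log\tfrac1q-\log 2 .
\]
Here $h(p)\le\log2$ is the binary entropy, and the last term is non-negative. From the Poisson series, $p\ge e^{-1}$ (the zero-jump term). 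Combining these gives the key inequality
\begin{equation*}
e^{-1}\log\frac{1}{P_1(x,y)}\le 2A+\log2,
\qquad\text{i.e.}\qquad
P_1(x,y)\ge d_A^{-1}\quad\text{for every }y\sim x .
\end{equation*}

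\emph{Degree bound.} Since $\sum_{y\sim x}P_1(x,y)\le1$, summing the key inequality over the $\deg(x)$ neighbours of $x$ gives $\deg(x)\,d_A^{-1}\le 1$. This is exactly \eqref{eq:LH-degree-bound}.

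\emph{Ratio bound.} Reversibility of $P$ passes to $P_1=e^{L}$, so $m(x)P_1(x,y)=m(y)P_1(y,x)$. Hence
\[
P_1(x,y)=\frac{m(y)}{m(x)}\,P_1(y,x)\le \frac{m(y)}{m(x)} ,
\]
and plugging this into the key inequality yields $m(x)/m(y)\le d_A$, which is \eqref{eq:ratio}. For the random-walk kernel this is also immediate from the degree bound, since $m(x)/m(y)=\deg(x)/\deg(y)\le\deg(x)$.

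I do not expect a real obstacle; the only points needing care are these:
\begin{itemize}[nosep]
\item the orientation of the divergence, so that the large mass $p\ge e^{-1}$ sits in the first argument;
\item the binary-entropy bound, which is what produces the exact constant $e(2A+\log 2)$.
\end{itemize}
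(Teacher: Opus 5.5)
Your proposal is correct and matches the paper's proof essentially step for step. Both apply $\mathrm{LH}(A)$ at $t=1$ to the ordered pair $(y,x)$, push forward under $\1_{\{y\}}$ and use the binary-entropy bound with $P_1(y,y)\ge e^{-1}$ to get $P_1(x,y)\ge d_A^{-1}$, then sum over neighbours for the degree bound and use $m(x)/m(y)=P_1(y,x)/P_1(x,y)\le d_A$ for the ratio bound.
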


\begin{proof}
Fix an edge $x\sim y$ and apply  Proposition \ref{prop:data-processing} to
the measurable map \(z\mapsto\1_{\{y\}}(z)\).
We have
\[
 D(P_1(y,\cdot)\|P_1(x,\cdot))\geq D((\1_{\{y\}})_\#P_1(y,\cdot)\|(\1_{\{y\}})_\#P_1(x,\cdot))
 \ge P_1(y,y)\log\frac1{P_1(x,y)}-\log2.
\]
Since $P_1(y,y)\ge e^{-1}$ and $\dist(x,y)=1$, the log-Harnack inequality at
time $1$ gives
\[
 2A\ge D(P_1(y,\cdot)\|P_1(x,\cdot))
 \ge e^{-1}\log\frac1{P_1(x,y)}-\log2.
\]
Thus,
$$P_1(x,y)\ge\exp\{-e(2A+\log2)\}.$$Therefore,
\[
 1\ge\sum_{y\sim x}P_1(x,y)\ge \exp\{-e(2A+\log2)\}\deg(x),
\]
which proves \eqref{eq:LH-degree-bound}.  
Moreover,$$\frac{m(x)}{m(y)}\leq \frac{P_1(y,x)}{P_1(x,y)}\leq d_A.$$ It is \eqref{eq:ratio}.
\end{proof}

\subsection*{Heat entropy}
Given a locally-finite graph $G=(V,E)$ and the heat semigroup $P_t$ associated to the random walk kernel $P$,
by detailed-balance,
\begin{equation*}
    m(x)P_t(x,y)
    =
    m(y)P_t(y,x),
    \qquad x,y\in V,\quad t\geq0.
\end{equation*}
Define the heat kernel with respect to \(m\) by
\begin{equation*}
    h_t^x(y)
    :=
    \frac{P_t(x,y)}{m(y)}.
\end{equation*}
Therefore, we write

\begin{equation*}
\mathcal H_t(x)=-\sum_{z\in V}P_t(x,z)\log h_t^x(z).
\end{equation*}
and define
\begin{equation*}
 \mathcal H^*(t):=\sup_{x\in V}\mathcal H_t(x).
\end{equation*}
and Shannon entropy
$$H_t(x):=-\sum_{z\in V}P_t(x,z)\log P_t(x,z).$$
It is direct to check that $m(x)P_t(x,y)=m(y)P_t(y,x)$.

Since $m(z)\ge1$ and $P_t(x,z)\le1$, one has $0<h_t^x(z)\le1$ whenever
$P_t(x,z)>0$, and hence $\mathcal H_t(x)\ge0$.
The monotonicity and linear upper bound of Shannon entropy is classic. The proof of them in $\mathcal H_t(x)$ are similar and we present it for completeness.
\begin{lemma}
\label{lem:entropy-linear}
Let $G=(V,E)$ be a graph satisfying $\mathrm{LH}(A)$. There is $C_A<\infty$ such that
\begin{equation}\label{eq:entropy-linear}
 \mathcal H^*(t)\le C_A(1+t),\qquad t\ge0.
\end{equation}
Moreover, $t\mapsto\mathcal H_t(x)$ is nondecreasing for each $x$.
\end{lemma}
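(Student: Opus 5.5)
The proof has two parts: monotonicity, which comes from convexity and the invariance of $m$, and the linear bound, which comes from Proposition~\ref{prop:LH-degree}.

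\textbf{Monotonicity.} I will write $\mathcal H_t(x)=-\sum_z m(z)\varphi(f_t(z))$, where $\varphi(u)=u\log u$ and $f_t:=h_t^x=P_t(x,\cdot)/m$. By the semigroup property, $P_{t+s}(x,z)=\sum_y P_t(x,y)P_s(y,z)$. Detailed balance $m(y)P_s(y,z)=m(z)P_s(z,y)$ then gives
\[
 f_{t+s}(z)=\sum_y P_s(z,y)f_t(y),
\]
so $f_{t+s}=P_sf_t$ as a convex average. Jensen's inequality, pointwise in $z$ since $P_s(z,\cdot)$ is a probability measure, yields $\varphi(f_{t+s}(z))\le\sum_yP_s(z,y)\varphi(f_t(y))$. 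Multiplying by $m(z)$, summing, and using $m(z)P_s(z,y)=m(y)P_s(y,z)$ together with $\sum_zP_s(y,z)=1$ gives
\[
 \sum_z m(z)\varphi(f_{t+s}(z))\le\sum_y m(y)\varphi(f_t(y)).
\]
Hence $\mathcal H_{t+s}(x)\ge\mathcal H_t(x)$. Exchanging the order of summation is legitimate on a possibly infinite graph because every term has a fixed sign. Indeed $m\ge1$ and $P_t\le1$ give $0\le f_t\le1$, so $\varphi(f_t)\le0$, and Tonelli applies to the nonpositive double series.

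\textbf{Linear bound.} First split the heat entropy as
\[
 \mathcal H_t(x)=H_t(x)+\sum_zP_t(x,z)\log m(z)\le H_t(x)+\log d_A.
\]
This uses $m(z)=\deg(z)\le d_A$ from Proposition~\ref{prop:LH-degree}. To bound the Shannon entropy $H_t(x)$, realize $X_t$ as follows: let $N_t\sim\mathrm{Poisson}(t)$ count the jumps, and let $X_t=\Phi(N_t,U_1,\dots,U_{N_t})$, where $U_i$ is the index of the neighbour chosen at the $i$-th jump among at most $d_A$ options. A deterministic function cannot increase Shannon entropy, and the chain rule applies, so
\[
 H_t(x)\le H(N_t)+\E[N_t]\log d_A = H(N_t)+t\log d_A.
\]
Finally, the entropy of a Poisson law satisfies $H(N_t)\le C(1+\log(1+t))\le C(1+t)$. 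One way to see this is to compare with a geometric law of the same mean, whose entropy is $O(\log(1+t))$. Combining the three displays gives $\mathcal H^*(t)\le C_A(1+t)$ uniformly in $x$.

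\textbf{Main obstacle.} The only delicate point is justifying the rearrangements of the infinite sums on an infinite graph, which is handled by the sign observation above. The Shannon entropy bound is classical.
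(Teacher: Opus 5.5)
Your proof is correct and follows the paper's own argument. Monotonicity comes from Jensen applied to $h^x_{t+s}=P_sh^x_t$ together with invariance of $m$, and the linear bound comes from the splitting $\mathcal H_t(x)=H_t(x)+\E_x\log m(X_t)$ with the degree bound of Proposition~\ref{prop:LH-degree}. Your Poisson-jump/chain-rule bound on $H_t(x)$ and the Tonelli justification simply fill in details the paper leaves as classical.
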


\begin{proof}
By Proposition \ref{prop:LH-degree}, $G$ has bounded degree $d_A$. Therefore,
\[
 \mathcal H_t(x)=H_t(x)+\E_x\log m(X_t)
 \le H_t(x)+\log d_A,
\]
and Shannon entropy $H_t(x)\leq C_A(1+t)$. This proves \eqref{eq:entropy-linear}.

For monotonicity, 
reversibility gives \(h_{t+s}^x=P_sh_t^x\).  Since \(u\mapsto u\log u\)
is convex and \(m\) is invariant, Jensen's inequality gives
\[
        \int (P_sh_t^x)\log(P_sh_t^x)\,dm
        \le\int P_s(h_t^x\log h_t^x)\,dm
        =\int h_t^x\log h_t^x\,dm.
\]
\end{proof}
\subsection*{Displacement controls entropy}
The following lemma is the standard mutual-information identity for Shannon entropy; see, for example, \cite[Eq.~(2.40)]{CoverThomas2006}. We state its equivalent form for $\mathcal H_t(x)$.
\begin{lemma}
\label{lem:mutual-information}
Fix $x\in V$ and $s,t>0$. 
Then
\begin{equation}\label{eq:mutual-information}
 \mathcal H_{s+t}(x)-\E_x\mathcal H_t(X_s)= \sum_yP_s(x,y)
 D\bigl(P_t(y,\cdot)\|P_{s+t}(x,\cdot)\bigr).
\end{equation}

\end{lemma}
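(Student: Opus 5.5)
We prove \eqref{eq:mutual-information} by a direct computation that expands both sides using the Chapman--Kolmogorov identity $P_{s+t}(x,z)=\sum_y P_s(x,y)P_t(y,z)$. It is the Markov-chain version of the identity $I(X_s;X_{s+t})=H(X_{s+t})-H(X_{s+t}\mid X_s)$, with the reference measure $m$ inserted. Write $\mu_y:=P_t(y,\cdot)$ and $\bar\mu:=P_{s+t}(x,\cdot)=\sum_y P_s(x,y)\mu_y$.

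First expand the right-hand side. Whenever $P_s(x,y)>0$ and $\mu_y(z)>0$, we have $\bar\mu(z)\ge P_s(x,y)\mu_y(z)>0$, so $\mu_y\ll\bar\mu$. Hence
\[
D(\mu_y\|\bar\mu)=\sum_z \mu_y(z)\log\frac{\mu_y(z)}{m(z)}-\sum_z\mu_y(z)\log\frac{\bar\mu(z)}{m(z)},
\]
where we divide numerator and denominator by $m(z)$. The first sum is $-\mathcal H_t(y)$. Averaging over $y$ with weights $P_s(x,y)$ turns it into $-\E_x\mathcal H_t(X_s)$. Swapping the order of summation in the second sum and using Chapman--Kolmogorov gives
\[
-\sum_z\bar\mu(z)\log h^x_{s+t}(z)=\mathcal H_{s+t}(x).
\]
Adding the two pieces yields exactly the left-hand side.

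The main obstacle is justifying the splitting of the logarithm and the Fubini-type interchange, since $V$ may be infinite. Each individual sum must be absolutely convergent, or at least all terms must have a common sign, so that no $\infty-\infty$ appears.

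To handle this, note first that the tail of $P_t(y,\cdot)$ is Poissonian. By Proposition \ref{prop:LH-degree} the degrees are bounded by $d_A$, so $|B(y,k)|\le d_A^{k+1}$, and $m$ takes values in $[1,d_A]$. Therefore $\mathcal H_t(y)\le C_A(1+t)$ uniformly in $y$ by Lemma \ref{lem:entropy-linear}, and the terms $-\mu_y(z)\log h^y_t(z)$ are all nonnegative. So $\E_x\mathcal H_t(X_s)$ is a finite sum of nonnegative terms, and likewise $\mathcal H_{s+t}(x)$ is finite.

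For the cross term, $-\log h^x_{s+t}(z)\ge 0$ because $h^x_{s+t}(z)\le1$. Tonelli's theorem therefore applies to the double sum $\sum_y\sum_z P_s(x,y)\mu_y(z)\bigl(-\log h^x_{s+t}(z)\bigr)$, which equals $\mathcal H_{s+t}(x)<\infty$. With all three quantities finite and each sum having terms of constant sign, splitting $D(\mu_y\|\bar\mu)$ termwise is legitimate, and summing the resulting identity over $y$ is justified as well.

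Finally, the paper notes this is equivalent to the classical Shannon identity. As a consistency check, the $\E\log m$ terms cancel: $\E_x\log m(X_{s+t})$ appears on both sides, because the law of $X_{s+t}$ is $\bar\mu$.
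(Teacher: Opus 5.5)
Your proposal is correct and uses the same argument as the paper: expand both sides with Chapman--Kolmogorov, $P_{s+t}(x,\cdot)=\sum_y P_s(x,y)P_t(y,\cdot)$, and recombine the logarithms; the paper does this formally in one line, while you also justify the splitting of the logarithm and the interchange of sums via Tonelli and finiteness of the entropies. The one caveat is that your finiteness step borrows $\mathrm{LH}(A)$ through Proposition~\ref{prop:LH-degree} and Lemma~\ref{lem:entropy-linear}; this is not among the lemma's stated hypotheses, but it is exactly the setting of the lemma's only application, Lemma~\ref{lem:entropy-increment-sharp}, and in general finiteness of $\E_x\mathcal H_t(X_s)$ alone already suffices for the identity to hold in $[0,\infty]$.
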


\begin{proof}
It is direct to check that
\begin{align*}
\mathcal H_{s+t}(x)-\E_x\mathcal H_t(X_s)&=-\sum_{z\in V}P_{s+t}(x,z)\log \frac{P_{s+t}(x,z)}{m(z)}+\sum_{y,z\in V}P_s(x,y)P_t(y,z)\log\frac{P_t(y,z)}{m(z)}\\
&=\sum_{y,z\in V}P_s(x,y)P_t(y,z)\log\frac{P_t(y,z)}{P_{s+t}(x,z)}\\
&=\sum_yP_s(x,y)
 D\bigl(P_t(y,\cdot)\|P_{s+t}(x,\cdot)\bigr).
\end{align*}
\end{proof}

\begin{lemma}
\label{lem:entropy-increment-sharp}
Let  $G=(V,E)$ be a graph satisfying $\mathrm{LH}(A)$. Suppose that, for some $\beta\in(1,2]$ and $M\ge1$,
\begin{equation}\label{eq:moment-beta}
 \sup_x\E_x\dist(x,X_s)^2\le M s^{\beta},\qquad s\ge1.
\end{equation}
Then, for $s,t\ge1$,
\begin{equation}\label{eq:entropy-increment-sharp}
 \mathcal H^*(s+t)\le\mathcal H^*(t)
 +4A\left(\frac{M s^{\beta}}{t}
 +\frac{\sqrt M\,s^{\beta/2}}{\sqrt t}\right).
\end{equation}
\end{lemma}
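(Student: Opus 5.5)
Fix $x\in V$ and $s,t\ge1$. The plan is to bound the right-hand side of the mutual-information identity (Lemma~\ref{lem:mutual-information}) using $\mathrm{LH}(A)$, then control the left-hand side by $\mathcal H^*$.

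\emph{Step 1: convexity of relative entropy.} Since $P_{s+t}(x,\cdot)=\sum_{w}P_s(x,w)P_t(w,\cdot)$ and $\nu\mapsto D(\mu\|\nu)$ is convex, for each $y$ we get
\[
 D\bigl(P_t(y,\cdot)\|P_{s+t}(x,\cdot)\bigr)\le\sum_w P_s(x,w)\,D\bigl(P_t(y,\cdot)\|P_t(w,\cdot)\bigr).
\]
Convexity follows from joint convexity of $(\mu,\nu)\mapsto D(\mu\|\nu)$, or directly from concavity of $\log$ via Jensen. Plugging in $\mathrm{LH}(A)$ gives
\[
 \sum_yP_s(x,y)D\bigl(P_t(y,\cdot)\|P_{s+t}(x,\cdot)\bigr)\le A\,\E\!\left[\frac{\dist(Y,W)^2}{t}+\frac{\dist(Y,W)}{\sqrt t}\right],
\]
where $Y,W$ are independent with law $P_s(x,\cdot)$.

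\emph{Step 2: moments.} By the triangle inequality $\dist(Y,W)\le\dist(x,Y)+\dist(x,W)$. Hence
\[
\E\dist(Y,W)^2\le 2\E\dist(x,Y)^2+2\E\dist(x,W)^2\le 4Ms^\beta
\]
by \eqref{eq:moment-beta}. Also, by Cauchy--Schwarz,
\[
\E\dist(Y,W)\le 2\sqrt{\E_x\dist(x,X_s)^2}\le 2\sqrt M s^{\beta/2}\le 4\sqrt M s^{\beta/2}.
\]
Therefore the right side of \eqref{eq:mutual-information} is at most $4A\bigl(Ms^\beta/t+\sqrt M s^{\beta/2}/\sqrt t\bigr)$.

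\emph{Step 3: conclusion.} Since $\E_x\mathcal H_t(X_s)\le\mathcal H^*(t)$, the identity gives
\[
\mathcal H_{s+t}(x)\le \mathcal H^*(t)+4A\left(\frac{M s^{\beta}}{t}+\frac{\sqrt M\,s^{\beta/2}}{\sqrt t}\right).
\]
Taking the supremum over $x$ yields \eqref{eq:entropy-increment-sharp}. Here $\mathcal H^*(t)<\infty$ by Lemma~\ref{lem:entropy-linear}, so no $\infty-\infty$ issue arises. Rearranging the identity also requires $\E_x\mathcal H_t(X_s)$ to be finite, which holds by the same lemma.

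\emph{Main obstacle.} The only delicate point is Step~1: justifying convexity of $D(\mu\|\cdot)$ for countable mixtures, including the interchange of sums. I would handle it by writing
\[
D(\mu\|\nu)=\sum_z\mu(z)\log\mu(z)-\sum_z\mu(z)\log\nu(z)
\]
and applying Jensen pointwise,
\[
-\log\sum_wP_s(x,w)P_t(w,z)\le-\sum_wP_s(x,w)\log P_t(w,z),
\]
then using Tonelli for the nonnegative terms.
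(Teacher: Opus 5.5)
Your proof is correct and follows the paper's argument essentially verbatim. The shared steps are: convexity of $D(\mu\|\cdot)$ over the mixture $P_{s+t}(x,\cdot)=\sum_w P_s(x,w)P_t(w,\cdot)$; $\mathrm{LH}(A)$ applied to an independent pair of $P_s(x,\cdot)$-samples; the triangle-inequality and Cauchy--Schwarz moment bounds; and the mutual-information identity combined with $\E_x\mathcal H_t(X_s)\le\mathcal H^*(t)$. Your extra remarks on finiteness (via Lemma~\ref{lem:entropy-linear}) and on justifying convexity for countable mixtures via pointwise Jensen and Tonelli fill in details the paper leaves implicit.
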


\begin{proof}
Let $Y'$ be an independent copy of $Y=X_s$.  Since
$P_{s+t}(x,\cdot)=\sum_wP_s(x,w)P_t(w,\cdot)$, convexity of relative
entropy gives
\begin{align*}
    \sum_yP_s(x,y)
 D\bigl(P_t(y,\cdot)\|P_{s+t}(x,\cdot)\bigr)&\le \sum_{y,z}P_s(x,y)P_s(x,z)
 D\bigl(P_t(y,\cdot)\|P_{t}(z,\cdot)\bigr)\\
    &=\E D(P_t(Y,\cdot)\|P_t(Y',\cdot)).
\end{align*}
 
The triangle inequality and \eqref{eq:moment-beta} give
\[
 \E\dist(Y,Y')^2\le4Ms^{\beta},\qquad
 \E\dist(Y,Y')\le2\sqrt M\,s^{\beta/2}.
\]
Apply $\mathrm{LH}(A)$ and then we have \eqref{eq:entropy-increment-sharp}.
\end{proof}

\begin{lemma}
\label{lem:scaled-iteration}
Let $a\in(0,1)$, $M\ge1$, and let $F$ be nondecreasing.  Suppose
\[
 F\bigl(t+M^{-a}t^a\bigr)\le F(t)+C_0,
 \qquad t\ge M.
\]
Then, with $\eta=1-a$,
\begin{equation*}
 F(t)\le F(M)+\frac{4C_0}{\eta}M^at^\eta,
 \qquad t\ge M.
\end{equation*}
\end{lemma}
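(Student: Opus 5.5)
We prove the bound by iterating the step inequality along the increasing sequence $t_0=M$, $t_{k+1}=t_k+M^{-a}t_k^a$. The inequality applies at each $t_k$ because $t_k\ge M$. By induction this gives $F(t_k)\le F(M)+kC_0$. For a given $t\ge M$ we take the first index $k$ with $t_k\ge t$. Then, since $F$ is nondecreasing, $F(t)\le F(t_k)\le F(M)+kC_0$. Everything therefore reduces to counting how many steps are needed to pass $t$, namely showing $k\le \frac{4}{\eta}M^a t^\eta$. The sequence does tend to infinity: the increments are at least $M^{-a}M^a=1$, so $t_k\ge M+k$.

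To count steps we use the potential $g(u)=u^\eta$ with $\eta=1-a$. The plan is to show that each step with $t_j<t$ raises $g$ by a definite amount. By the mean value theorem,
\[
g(t_{j+1})-g(t_j)=\eta\,\xi^{-a}\,M^{-a}t_j^a \quad\text{for some }\xi\in[t_j,t_{j+1}].
\]
Since $t_{j+1}=t_j(1+M^{-a}t_j^{a-1})$ and $M^{-a}t_j^{a-1}\le M^{-a}M^{a-1}=M^{-1}\le 1$ (because $t_j\ge M\ge1$ and $a<1$), we have $\xi\le 2t_j$. Hence
\[
g(t_{j+1})-g(t_j)\ge \eta\,2^{-a}M^{-a}\ge \tfrac{\eta}{2}M^{-a}.
\]

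Summing this over $j=0,\dots,k-2$ (all with $t_j<t$) gives $(k-1)\frac{\eta}{2}M^{-a}\le g(t_{k-1})\le t^\eta$, that is, $k\le 1+\frac{2}{\eta}M^a t^\eta$. Because $M^at^\eta\ge1$ and $\eta\le1$, the extra $1$ is absorbed, so $k\le \frac{3}{\eta}M^at^\eta\le\frac4\eta M^at^\eta$. The case $t=M$, where $k=0$, is trivial. Combining with $F(t)\le F(M)+kC_0$ gives the claim.

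The only real obstacle is the uniform lower bound on the increment of $u^\eta$. It comes from the fact that a single step at most doubles $t_j$, which is exactly where the hypotheses $t\ge M$ and $M\ge1$ are used. The remaining constant bookkeeping is routine, and the factor $4$ leaves enough slack for it.
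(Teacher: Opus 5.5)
Your proposal is correct and follows the paper's proof: you use the same sequence $t_{k+1}=t_k+M^{-a}t_k^a$, the same observation that each step at most doubles $t_k$, the same lower bound $\frac{\eta}{2}M^{-a}$ on the increments of $u^\eta$, and then monotonicity of $F$. The differences are cosmetic: you use the mean value theorem where the paper writes $\eta\int_{t_k}^{t_{k+1}}u^{-a}\,du$, and your explicit treatment of the off-by-one in the step count (absorbing the extra $1$ via $M^at^\eta\ge 1$) is more careful than the paper's bare claim that at most $2\eta^{-1}M^at^\eta$ steps are needed.
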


\begin{proof}
Set $t_0=M$ and $t_{k+1}=t_k+M^{-a}t_k^a$.  Since $t_k\ge M$,
one has $t_{k+1}\le2t_k$.  Therefore
\[
 t_{k+1}^\eta-t_k^\eta
 =\eta\int_{t_k}^{t_{k+1}}u^{-a}\,du
 \ge \frac{\eta}{2}M^{-a}.
\]
Thus at most $2\eta^{-1}M^at^\eta$ steps are needed to pass $t$.
Iteration and monotonicity prove the claim.
\end{proof}

\begin{theorem}[Displacement controls entropy]
\label{prop:moment-entropy-sharp}
Let  $G=(V,E)$ be a graph satisfying $\mathrm{LH}(A)$.  Suppose that, for some $\beta\in(1,2]$ and $M\ge1$,
\begin{equation*}
 \sup_x\E_x\dist(x,X_s)^2\le M s^{\beta},\qquad s\ge1.
\end{equation*}
Put  $a=\frac1{\beta} \text{ and }\eta=1-{\frac1{\beta}}$.
Then, there exists a constant $C_E(A)$, such that
\begin{equation}\label{eq:entropy-sharp}
 \mathcal H^*(t)\le
 \frac{C_E(A)}{\eta}M^a t^{\eta},
 \qquad t\ge1.
\end{equation}
\end{theorem}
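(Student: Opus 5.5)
We combine Lemma \ref{lem:entropy-increment-sharp} with the iteration Lemma \ref{lem:scaled-iteration}, applied to $F=\mathcal H^*$. By Lemma \ref{lem:entropy-linear}, $\mathcal H^*$ is nondecreasing (a supremum of nondecreasing functions), so the monotonicity hypothesis of Lemma \ref{lem:scaled-iteration} holds.

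The first step is to choose the increment $s$ as a function of $t$ so that the error term in \eqref{eq:entropy-increment-sharp} is bounded by a constant. Take $s=M^{-a}t^{a}$ with $a=1/\beta$. Then $Ms^\beta=M\cdot M^{-1}t=t$, so
\[
\frac{Ms^\beta}{t}=1,\qquad \frac{\sqrt M\, s^{\beta/2}}{\sqrt t}=1 .
\]
Lemma \ref{lem:entropy-increment-sharp} requires $s,t\ge1$. For $t\ge M$ we have $s=(t/M)^a\ge1$, and $t\ge M\ge1$. Hence
\[
\mathcal H^*\bigl(t+M^{-a}t^a\bigr)\le \mathcal H^*(t)+8A,\qquad t\ge M.
\]
Lemma \ref{lem:scaled-iteration} with $C_0=8A$ and $\eta=1-a$ then gives
\[
\mathcal H^*(t)\le \mathcal H^*(M)+\frac{32A}{\eta}M^at^\eta,\qquad t\ge M.
\]

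The second step controls the initial value $\mathcal H^*(M)$ and the range $1\le t\le M$. By \eqref{eq:entropy-linear}, $\mathcal H^*(t)\le C_A(1+t)\le 2C_AM$ for $t\le M$. We need this to be at most a constant times $\eta^{-1}M^at^\eta$, and this is where the main obstacle lies. For $t$ near $M$, $M^at^\eta\approx M$, so the linear bound is comparable. For small $t$, however, $M^at^\eta\ge M^a$ only, which is smaller than $M$.

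To handle small $t$, I would apply the increment lemma with smaller jumps. For $t\ge1$ and $s\le t$, the error term in \eqref{eq:entropy-increment-sharp} is bounded by $4A(Ms^{\beta-1}+\sqrt{M}s^{(\beta-1)/2})$, which is wasteful. A cleaner route is to note that the moment hypothesis with $s^\beta\ge s$ is only informative in the stated form. So instead I would rerun the iteration started at $t_0=1$, with steps $s_k=\max(1,\dots)$, or more simply bound $\mathcal H^*(t)\le C_A(1+t)\le 2C_At$ for $t\le M$ directly. We have $t\le M^at^\eta$ precisely when $t^a\le M^a$, i.e.\ when $t\le M$. So for $1\le t\le M$,
\[
\mathcal H^*(t)\le 2C_A t\le 2C_A M^a t^\eta\le \frac{2C_A}{\eta}M^at^\eta .
\]
This resolves the worry. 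In particular $\mathcal H^*(M)\le 2C_AM=2C_AM^aM^\eta\le 2C_AM^at^\eta$ for $t\ge M$.

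Combining both ranges and using $\eta\le1$, we obtain \eqref{eq:entropy-sharp} with $C_E(A)=2C_A+32A$. The delicate bookkeeping is confined to checking the hypotheses $s,t\ge1$ in Lemma \ref{lem:entropy-increment-sharp}, which forces us to start the iteration at $M$, and to absorbing the initial value, which is done via the comparison $t\le M^at^\eta$ for $t\le M$.
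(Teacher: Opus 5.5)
Your proof is correct and follows the paper's argument. The steps match: you choose $s=(t/M)^a$ so that the increment in Lemma~\ref{lem:entropy-increment-sharp} is a constant multiple of $A$, you start Lemma~\ref{lem:scaled-iteration} at $M$, and you use the comparison $t\le M^at^\eta$ on $[1,M]$, which also gives $\mathcal H^*(M)\lesssim M^aM^\eta$. The detour in your second step (rerunning the iteration from $t_0=1$) is unnecessary and can be deleted. Your bound $C_A(1+t)\le 2C_At$ is slightly more careful than the paper's $\mathcal H^*(t)\le C_At$.
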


\begin{proof}
If $1\le t\le M$, Lemma~\ref{lem:entropy-linear} gives
$\mathcal H^*(t)\le C_At$.  Since $$t=t^{a} t^{\eta}\le M^{a} t^{\eta},$$ this is
bounded by the right-hand side of \eqref{eq:entropy-sharp}.

Now let $t\ge M$ and choose $s=(t/M)^{a}\geq 1$.  Then Lemma \ref{lem:entropy-increment-sharp} gives $$\mathcal H^*(t+(t/M)^{a})\leq \mathcal{H}^*(t)+8A.$$  Apply Lemma \ref{lem:entropy-increment-sharp} and Lemma~\ref{lem:scaled-iteration}, starting $t$ at $M$,
and use $\mathcal H^*(M)\le C_AM=C_AM^{a}M^{\eta}$.
\end{proof}

\subsection*{Entropy controls volume}

\begin{proposition}[Entropy--volume inequality]
\label{prop:entropy-volume}
Let  $G=(V,E)$ be a graph satisfying $\mathrm{LH}(A)$.  Then, for every $x\in V$, $R\ge0$, and $t>0$,
\begin{equation}\label{eq:entropy-volume}
 \log\Vol(B(x,R))\le\mathcal H_t(x)
 +A\left(\frac{R^2}{t}+\frac{R}{\sqrt t}\right).
\end{equation}
\end{proposition}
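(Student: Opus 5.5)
The plan is to apply $\mathrm{LH}(A)$ in the direction $D(P_t(x,\cdot)\|P_t(y,\cdot))$ for each $y\in B(x,R)$. We then rewrite this divergence using reversibility and average over $y$ with weights proportional to $m(y)$. Concavity of the logarithm turns that average into $\log\Vol(B(x,R))$.

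\emph{Step 1: rewrite the divergence.} Fix $y\in B(x,R)$. Since $G$ is connected and time is continuous, $P_t(u,v)>0$ for all $u,v$, so every logarithm below is finite. Write $P_t(x,z)=m(z)h_t^x(z)$ and $P_t(y,z)=m(z)h_t^y(z)$. The factors $m(z)$ cancel, giving
\[
 D\bigl(P_t(x,\cdot)\|P_t(y,\cdot)\bigr)
 =\sum_z P_t(x,z)\log h_t^x(z)-\sum_z P_t(x,z)\log h_t^y(z)
 =-\mathcal H_t(x)-\sum_z P_t(x,z)\log\frac{P_t(z,y)}{m(y)}.
\]
The last equality uses detailed balance in the form $h_t^y(z)=P_t(y,z)/m(z)=P_t(z,y)/m(y)$. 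Combining this with $\mathrm{LH}(A)$, the bound $\dist(x,y)\le R$, and the monotonicity of $r\mapsto r^2/t+r/\sqrt t$ yields, for every $y\in B(x,R)$,
\[
 -\sum_z P_t(x,z)\log\frac{P_t(z,y)}{m(y)}\le \mathcal H_t(x)+A\Bigl(\frac{R^2}{t}+\frac{R}{\sqrt t}\Bigr).
\]

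\emph{Step 2: average over the ball.} Multiply the last inequality by $m(y)/\Vol(B(x,R))$ and sum over $y\in B(x,R)$. The right-hand side is unchanged. On the left, fix $z$ and apply Jensen's inequality for the concave function $\log$ with respect to the probability weights $m(y)/\Vol(B(x,R))$:
\[
 \sum_{y\in B(x,R)}\frac{m(y)}{\Vol(B(x,R))}\log\frac{P_t(z,y)}{m(y)}
 \le\log\Bigl(\frac{1}{\Vol(B(x,R))}\sum_{y\in B(x,R)}P_t(z,y)\Bigr)\le -\log\Vol(B(x,R)).
\]
The final inequality holds because $P_t(z,\cdot)$ is a probability measure. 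Multiplying by $-1$ and integrating against $P_t(x,z)$, the averaged left-hand side is at least $\log\Vol(B(x,R))$, which gives \eqref{eq:entropy-volume}.

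\emph{Main obstacle.} The only delicate point is choosing the correct direction of the divergence and the correct weights. Taking $x$ as the first argument makes the reversed kernel $P_t(z,y)$ appear, and weighting by $m(y)$ cancels the $1/m(y)$ so that the sum is a genuine probability mass bounded by $1$. If $B(x,R)$ is infinite this cannot occur under bounded degree, since Proposition~\ref{prop:LH-degree} bounds the degrees and the graph is locally finite, so all sums are finite. No other earlier result is needed.
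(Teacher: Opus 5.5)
Your proof is correct and is essentially the paper's argument. The paper averages $P_t(y,\cdot)$ over $y\in B(x,R)$ with the same weights $m(y)/\Vol(B(x,R))$, uses convexity of $D(P_t(x,\cdot)\|\cdot)$, and then uses the reversibility bound $\overline P_t(z)=\frac{m(z)}{\Vol(B)}P_t(z,B)\le\frac{m(z)}{\Vol(B)}$; your pointwise Jensen step for $\log$, after separating off $-\mathcal H_t(x)$, is that convexity inequality written out in coordinates, and your bound $P_t(z,B(x,R))\le 1$ is the same final estimate.
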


\begin{proof}
Let $B=B(x,R)$ and let
$\nu_B(\cdot):=m(\cdot)\1_B(\cdot)/\Vol(B)$.  Define the modified probability measure
$$\overline P_t(\cdot):=\sum_{y\in B}\nu_B(y)P_t(y,\cdot)$$Reversibility gives
\begin{equation}\label{eq:upperoverlinept}
 \overline P_t(z)=\frac{m(z)}{\Vol(B)}P_t(z,B)
 \le\frac{m(z)}{\Vol(B)}.
\end{equation}
Convexity and $\mathrm{LH}(A)$ give
\begin{align*}
    D(P_t(x,\cdot)\|\overline P_t(\cdot))&=D(P_t(x,\cdot)\|\sum_{y\in B}\nu_B(y)P_t(y,\cdot))\\
&\le\sum_{y\in B}\nu_B(y)D(P_t(x,\cdot)\|P_t(y,\cdot))
\\&\le A\left(\frac{R^2}{t}+\frac{R}{\sqrt t}\right).
\end{align*}
 
The upper bound \eqref{eq:upperoverlinept} on $\overline P_t$ gives the reverse estimate
$$D(P_t(x,\cdot)\|\overline P_t(\cdot))\ge\log\Vol(B)-\mathcal H_t(x).$$We prove \eqref{eq:entropy-volume}.
\end{proof}

\begin{theorem}[Entropy controls volume]
\label{cor:entropy-volume-sharp}
Let  $G=(V,E)$ be a graph satisfying $\mathrm{LH}(A)$.  Suppose that
$$\mathcal H^*(t)\le Ht^\eta$$ for $t\ge1$, where $H\ge1$ and
$\eta\in(0,1)$. 
Then, with $\gamma:=\frac{2\eta}{1+\eta}$,
\begin{equation*}
 \log\Vol(B(x,R))\le C_V(A)H^{1/(1+\eta)}R^\gamma,
 \qquad R\ge1.
\end{equation*}
\end{theorem}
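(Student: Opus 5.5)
The plan is to apply Proposition~\ref{prop:entropy-volume} with a time $t$ chosen to balance the entropy term against the transport term. Fix $x\in V$ and $R\ge1$. For every $t\ge1$, Proposition~\ref{prop:entropy-volume} and the hypothesis $\mathcal H^*(t)\le Ht^\eta$ give
\[
 \log\Vol(B(x,R))\le Ht^\eta+A\left(\frac{R^2}{t}+\frac{R}{\sqrt t}\right).
\]
Balancing $Ht^\eta$ against $R^2/t$ suggests the choice
\[
 t:=\left(\frac{R^2}{H}\right)^{1/(1+\eta)},
 \qquad\text{so that}\qquad
 Ht^\eta=\frac{R^2}{t}=H^{1/(1+\eta)}R^{2\eta/(1+\eta)}=H^{1/(1+\eta)}R^\gamma .
\]

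Next comes the cross term $R/\sqrt t$. Since $t\le R^2$ whenever $R\ge1$ and $H\ge1$, we have $R/\sqrt t\ge1$ and hence $R/\sqrt t\le R^2/t$. The whole bound is therefore at most $(1+2A)H^{1/(1+\eta)}R^\gamma$.

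The only real obstacle is the constraint $t\ge1$, which is needed because the entropy hypothesis is only stated for $t\ge1$. This constraint fails exactly when $R^2<H$. In that regime I would fall back to the crude bound coming from bounded degree. By Proposition~\ref{prop:LH-degree}, every vertex has at most $d_A$ neighbours and $m\le d_A$, so
\[
 \Vol(B(x,R))\le d_A\sum_{k=0}^{R}d_A^k\le d_A^{R+2},
 \qquad\text{i.e.}\qquad
 \log\Vol(B(x,R))\le (R+2)\log d_A\le 3R\log d_A .
\]
It then suffices to show $R\le H^{1/(1+\eta)}R^\gamma$ when $R^2<H$. This is equivalent to $R^{1-\gamma}\le H^{1/(1+\eta)}$, and $1-\gamma=(1-\eta)/(1+\eta)$. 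The inequality holds because $R^{1-\eta}\le R^2<H$.

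Alternatively, one could simply take $t=1$ in the regime $R^2<H$. Then $H+A(R^2+R)\le(1+2A)H$, and one needs $H\le H^{1/(1+\eta)}R^\gamma$, i.e.\ $H^{\eta/(1+\eta)}\le R^\gamma$. This fails in that regime, so the degree bound is the right fallback there. Combining the two cases gives the claim with $C_V(A):=\max\{1+2A,\ 3\log d_A\}$.
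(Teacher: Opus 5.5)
Your proof is correct and follows the paper's argument: you use the same balancing choice $t=(R^2/H)^{1/(1+\eta)}$ when $R^2\ge H$, and the same bounded-degree (Moore) fallback when $R^2<H$, absorbed via $R^{1-\gamma}\le H^{1/(1+\eta)}$. Your treatment of the cross term, using $t\le R^2$ to get $R/\sqrt t\le R^2/t$, is a slightly cleaner variant of the paper's bound $R/\sqrt t\le 1+R^2/t$.
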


\begin{proof}
If $R^2\ge H$, choose
$t=(R^2/H)^{1/(1+\eta)}$.  Then the entropy term and $R^2/t$ both equal
$H^{1/(1+\eta)}R^\gamma$, while the term $R/\sqrt t\leq 1+R^2/t$.  If $R^2<H$, use the Moore bound $\log\Vol(B(x,R))\leq C_A R$;  the inequality
$R^{1-\gamma}\le H^{1/(1+\eta)}$ in this range absorbs it.
\end{proof}

\subsection*{Volume controls displacement}
For $t>0$ and $r\ge0$, define
\begin{equation*}
 \zeta_t(r):=r\operatorname{arsinh}(r/t)-\sqrt{t^2+r^2}+t.
\end{equation*}
Let $G=(V,E)$ be a graph. Recall that the heat semigroup is defined via $P_t=e^{t(P-I)}$ with random walk kernel $P$ and we write $h_t^x(y):=P_t(x,y)/m(y)$.

\begin{lemma}[Davies-Gaffney-Grigor’yan Lemma]
\label{lem:davies}
Let $G=(V,E)$ be a graph. For all $x,y\in V$ and $t>0$,
\begin{equation*}
 h_t^x(y)\le\frac1{\sqrt{m(x)m(y)}}
 \exp\{-\zeta_t(\dist(x,y))\}.
\end{equation*}
Moreover, for a universal $c_0\in (0,1)$,
\begin{equation}\label{eq:davies-zeta-lower}
 \zeta_t(r)\ge c_0
 \begin{cases}
 r^2/t,&0\le r\le t,\\
 r\log(1+r/t),&r>t.
 \end{cases}
\end{equation}
\end{lemma}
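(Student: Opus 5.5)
We prove the Davies--Gaffney--Grigor'yan bound by Davies' exponential-weight (perturbation) method, and then derive the lower bound on $\zeta_t$ by elementary calculus.

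\textbf{Step 1: weighted semigroup.} Fix $x$ and $\lambda\ge0$, and set $\psi(z):=\lambda\dist(x,z)$. Work on $\ell^2(m)$, where $L=P-I$ is self-adjoint and $\langle -Lf,f\rangle_m=\frac12\sum_{u,v}m(u)P(u,v)(f(u)-f(v))^2$. Consider the twisted semigroup $Q_t f:=e^{\psi}P_t(e^{-\psi}f)$, with generator $L_\psi f=e^{\psi}L(e^{-\psi}f)$. Since $\psi$ is $\lambda$-Lipschitz and $P(u,\cdot)$ is supported on neighbours,
\[
\langle L_\psi f,f\rangle_m\le(\cosh\lambda-1)\|f\|_m^2 .
\]
To see this, symmetrise $\sum m(u)P(u,v)e^{\psi(u)-\psi(v)}f(v)f(u)$ and use $e^{a}+e^{-a}\le 2\cosh\lambda$ together with $2|f(u)f(v)|\le f(u)^2+f(v)^2$ and $\sum_v m(u)P(u,v)=m(u)$. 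Hence $\|Q_t\|_{2\to2}\le e^{t(\cosh\lambda-1)}$, and the same bound holds for $e^{-\psi}P_te^{\psi}$.

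\textbf{Step 2: pointwise bound.} Write $P_t=P_{t/2}P_{t/2}$ and use reversibility:
\[
h_t^x(y)=\langle h_{t/2}^x,h_{t/2}^y\rangle_m
=\langle e^{\psi}h_{t/2}^x,\,e^{-\psi}h_{t/2}^y\rangle_m .
\]
We have $e^{-\psi}h^y_{t/2}=e^{-\psi}P_{t/2}e^{\psi}(e^{-\psi}\delta_y/m(y))$ and $e^{\psi}h^x_{t/2}=Q_{t/2}(\delta_x/m(x))$ because $\psi(x)=0$. Since $\|\delta_z/m(z)\|_m=m(z)^{-1/2}$, Cauchy--Schwarz gives
\[
h_t^x(y)\le \frac{e^{-\lambda \dist(x,y)}e^{t(\cosh\lambda-1)}}{\sqrt{m(x)m(y)}}.
\]
Now optimise over $\lambda$ with $r=\dist(x,y)$: take $\lambda=\operatorname{arsinh}(r/t)$, so $\cosh\lambda=\sqrt{1+r^2/t^2}$. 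Then $\lambda r-t(\cosh\lambda-1)=\zeta_t(r)$ exactly.

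\textbf{Step 3: lower bound on $\zeta_t$.} First, $\zeta_t(0)=0$ and $\zeta_t'(r)=\operatorname{arsinh}(r/t)$, so $\zeta_t(r)=\int_0^r\operatorname{arsinh}(u/t)\,du$.
\begin{itemize}
\item For $r\le t$, use $\operatorname{arsinh}(s)\ge s/\operatorname{arsinh}^{-1}$-type concavity: $\operatorname{arsinh}(s)\ge c s$ on $[0,1]$ with $c=\operatorname{arsinh}(1)$. This gives $\zeta_t(r)\ge c\,r^2/(2t)$.
\item For $r>t$, integrate over $u\in[r/2,r]$: $\zeta_t(r)\ge \frac r2\operatorname{arsinh}(r/(2t))\ge \frac r2\log(1+r/(2t))$. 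Finally compare $\log(1+r/(2t))\ge\frac12\log(1+r/t)$, which holds since $(1+s/2)^2\ge1+s$.
\end{itemize}
This yields a universal $c_0$.

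\textbf{Main obstacle.} The main point needing care is the operator-norm bound in Step 1 on a possibly infinite graph, since $e^{\pm\psi}$ is unbounded. We handle it by truncating $\psi$ to $\min(\psi,N)$, which is still $\lambda$-Lipschitz. Because $P$ is bounded on $\ell^2(m)$, $L_\psi$ is bounded and the Gronwall argument $\frac{d}{dt}\|Q_tf\|^2=2\langle L_\psi Q_tf,Q_tf\rangle$ is rigorous. Then let $N\to\infty$ by monotone convergence in the pointwise formula.
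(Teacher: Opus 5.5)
Your proof is correct. It differs from the paper mainly in that the paper does not prove the heat-kernel bound. It cites it from Bauer--Hua--Yau (their Eq.~(3)), and only checks the lower bound on $\zeta_t$ by estimating $g(u)=u\operatorname{arsinh}u-\sqrt{1+u^2}+1$ separately on $u\le1$ and $u\ge1$.

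You instead give a self-contained Davies exponential-perturbation argument with three ingredients:
\begin{itemize}
\item the form bound $\langle L_\psi f,f\rangle_m\le(\cosh\lambda-1)\|f\|_m^2$ for $\lambda$-Lipschitz $\psi$, obtained by symmetrising with reversibility;
\item the splitting $h_t^x(y)=\langle h_{t/2}^x,h_{t/2}^y\rangle_m$ followed by Cauchy--Schwarz;
\item optimisation at $\lambda=\operatorname{arsinh}(r/t)$.
\end{itemize}
This makes explicit that $\zeta_t$ is the Legendre transform of $\lambda\mapsto t(\cosh\lambda-1)$, which the paper only mentions in passing. Your approach removes the dependence on the reference. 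It works verbatim for unbounded degree, since $\|P\|_{\ell^2(m)\to\ell^2(m)}\le1$ always. With $\psi=\lambda\dist(\cdot,A)$ and no $t/2$ splitting, the same argument also gives the set-to-set bound $\langle P_t\1_A,\1_B\rangle_m\le\sqrt{\Vol(A)\Vol(B)}\,e^{-\zeta_t(\dist(A,B))}$. The citation, in exchange, buys brevity.

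Your calculus step is the paper's computation in substance, since $g'(u)=\operatorname{arsinh}u$. It also produces the explicit constant $c_0=1/4$: your bound $\zeta_t(r)\ge\frac r4\log(1+r/t)$ actually holds for every $r\ge0$, and $\operatorname{arsinh}(1)/2>1/4$.

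Two cosmetic fixes:
\begin{itemize}
\item The phrase about ``$s/\operatorname{arsinh}^{-1}$-type concavity'' should simply say that concavity of $\operatorname{arsinh}$ gives the chord bound $\operatorname{arsinh}(s)\ge s\operatorname{arsinh}(1)$ on $[0,1]$.
\item No limit $N\to\infty$ is needed after truncation. For $N\ge\lambda\dist(x,y)$ the truncated weight $\min(\psi,N)$ already equals $\lambda\dist(x,y)$ at $y$ and vanishes at $x$, so the bound is exact at that stage.
\end{itemize}
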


\begin{proof}
It is the heat-kernel estimate in \cite[Eq.~(3)]{BauerHuaYau2017}; see also \cite{BHY2015} and \cite{Delmotte99}. The function $\zeta_t(r)$ is defined via the Legendre transform and appears naturally on graphs; see \cite{Davies1993,Delmotte99,Pang93} for examples. Writing $\zeta_t(r)=tg(r/t)$ and estimating
$g(u)=u\operatorname{arsinh}u-\sqrt{1+u^2}+1$ in the ranges
$u\le1$ and $u\ge1$ proves \eqref{eq:davies-zeta-lower}.
\end{proof}

\begin{theorem}[Volume controls displacement]
\label{prop:volume-moment-sharp}
Let  $G=(V,E)$ be a graph satisfying $\mathrm{LH}(A)$.  Suppose that, for some $B\ge1$ and $0<\gamma\le2/3$,
\begin{equation}\label{eq:subexp-volume}
 \log\Vol(B(x,r))\le Br^\gamma,
 \qquad x\in V,\ r\ge1.
\end{equation}
 Then
\begin{equation}\label{eq:volume-moment-sharp}
 \sup_x\E_x\dist(x,X_t)^2
 \le C_D(A)(1+B)^{2/(2-\gamma)}t^{2/(2-\gamma)},
 \qquad t\ge1.
\end{equation}
\end{theorem}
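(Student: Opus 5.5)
Fix $x$ and $t\ge1$, and let $R:=\lambda\,(1+B)^{1/(2-\gamma)}t^{1/(2-\gamma)}$ with $\lambda\ge1$ a constant to be chosen depending only on $A$. This $R$ is the scale where $BR^\gamma \asymp R^2/t$: Gaussian decay just overtakes subexponential volume growth. I would split
\[
\E_x\dist(x,X_t)^2\le R^2+\sum_{k\ge0}(2^{k+1}R)^2\,\Prob_x\bigl(\dist(x,X_t)>2^kR\bigr),
\]
so the whole task reduces to the tail bound
\[
\Prob_x\bigl(\dist(x,X_t)>r\bigr)\le \exp(-c\,r^2/t)
\]
for $r\ge R$ in the Gaussian regime, plus a cruder bound for $r>t$. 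Once $R^2/t\ge\lambda^2$, the dyadic sum is then $O(R^2)$, which is exactly \eqref{eq:volume-moment-sharp}.

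For the tail, I would use Lemma~\ref{lem:davies} on each dyadic annulus $A_k:=B(x,2^{k+1}r)\setminus B(x,2^kr)$:
\[
\Prob_x(X_t\in A_k)=\sum_{y\in A_k}h_t^x(y)\,m(y)\le \sum_{y\in A_k}\sqrt{\frac{m(y)}{m(x)}}\,e^{-\zeta_t(2^kr)}\le \sqrt{d_A}\,\Vol\bigl(B(x,2^{k+1}r)\bigr)\,e^{-\zeta_t(2^kr)}.
\]
Here $m\ge1$ and $m\le d_A$ by Proposition~\ref{prop:LH-degree}. Now insert \eqref{eq:subexp-volume} and \eqref{eq:davies-zeta-lower}.

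In the range $2^kr\le t$ the exponent is $2^\gamma B(2^kr)^\gamma-c_0(2^kr)^2/t$. Since $\gamma<2$ and $r\ge R$, we have $B\rho^\gamma\le \lambda^{\gamma-2}\rho^2/t$ for all $\rho\ge R$. Choosing $\lambda$ large makes this at most $\tfrac{c_0}{4}\rho^2/t$, so the annulus mass is $\le \sqrt{d_A}\exp(-\tfrac{c_0}{2}4^kr^2/t)$, which is summable in $k$ with room to spare.

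In the range $\rho:=2^kr>t$ the Davies exponent is $c_0\rho\log(1+\rho/t)\ge c_0\rho\log 2$. This is already much larger than $B\rho^\gamma$ once $\rho\ge R$, because in this range $B\rho^\gamma\le \lambda^{\gamma-2}\rho^2/t$ and $\rho/t\ge1$. Rather than rely on that comparison, I would bound this regime more simply with the Moore bound $\Vol(B(x,\rho))\le d_A^{\rho+1}$ and the superlinear exponent $\rho\log(1+\rho/t)$, which beats $\rho\log d_A$ once $\rho\ge C_A t$. The leftover window $t<\rho<C_At$ is handled by the previous argument, since there $\rho\log(1+\rho/t)\gtrsim \rho^2/t$ with an $A$-dependent constant.

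The restriction $\gamma\le 2/3$ looks superfluous for this argument; I expect it is used only in the bootstrap, and here it at most ensures $2/(2-\gamma)\le 3/2$, which I will not need. The main obstacle I anticipate is bookkeeping: ensuring the constant $C_D$ depends only on $A$ and not on $B$. This is arranged by putting all $B$-dependence into $R$ via the factor $(1+B)^{1/(2-\gamma)}$ and choosing $\lambda$ independent of $\gamma$. Since $\lambda^{\gamma-2}\le\lambda^{-4/3}$ uniformly for $\gamma\le 2/3$, the hypothesis $\gamma\le2/3$ does serve here to give $\gamma$-uniform constants. Finally, the term $\sqrt{d_A}$ and the geometric sums contribute only $A$-dependent factors.
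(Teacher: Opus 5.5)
Your proof is correct. It departs from the paper's in how it handles small times and distances beyond $t$; the choice of dyadic rather than linear annuli is immaterial.

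The paper sets $L=[C_*(1+B)]^{1/(2-\gamma)}$ and treats $t<T:=L^{(2-\gamma)/(1-\gamma)}$ separately, using the trivial bound $\E_x\dist(x,X_t)^2\le 2t^2\le 2L^2t^{2/(2-\gamma)}$. For $t\ge T$ it observes that every $s>t$ satisfies $s^{1-\gamma}\ge C_*(1+B)$, so the volume hypothesis by itself gives $B(2s)^\gamma\le\frac{c_0}{2}s\log 2$. There $C_*=2^{5/3}/(c_0\log 2)$ is universal, $\gamma\le 2/3$ is used only to absorb $2^\gamma$, and $A$ enters only through $\sqrt{d_A}$.

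You avoid any split in $t$. Beyond $C_At$ you replace the volume hypothesis by the Moore bound coming from $\deg\le d_A$. On $t<\rho<C_At$ you turn the Davies exponent into $c\,\rho^2/t$ with $c$ depending on $A$, so the same comparison $B\rho^\gamma\le\lambda^{\gamma-2}\rho^2/t$ as in the Gaussian range applies.

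The cost is that your $\lambda$ depends on $A$ through $C_A$, which is harmless since $C_D$ may depend on $A$. The gain is a single argument with no small-time case, valid uniformly for $\gamma$ bounded away from $2$; the paper's threshold $T$ needs $\gamma<1$.

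One caution about the aside you chose not to rely on, namely that $c_0\rho\log 2$ dominates $B\rho^\gamma$ whenever $\rho\ge R$ and $\rho>t$. That claim is false. At $t=1$, $\rho=R$ one has $B\rho^{\gamma-1}\asymp\lambda^{\gamma-1}B^{1/(2-\gamma)}$, which is unbounded in $B$. This is exactly the failure the paper's threshold $T$ is built to avoid, so you were right to route around it.
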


\begin{proof}
Choose
\[
 L=[C_*(1+B)]^{1/{(2-\gamma)}},\qquad
 \rho=Lt^{1/(2-\gamma)},\qquad
 T=L^{(2-\gamma)/(1-\gamma)},
\]
where $C_*:=\frac{2^{5/3}}{c_0\log2}$ and $c_0$ in the equation \eqref{eq:davies-zeta-lower} .  Notice that $L\ge1$.
If $1\le t<T$, then
\[
 \E_x\dist(x,X_t)^2\le t^2+t\le2t^2
 \le2L^2t^{2/(2-\gamma)}.
\]
The last inequality follows from
$$t^{2(1-\frac{1}{2-\gamma})}<T^{2(1-\frac{1}{2-\gamma})}=L^2.$$

Suppose now that $t\ge T$, so that $\rho\le t$.  Put
$A_0=B(x,\rho)$ and, for $k\ge1$,
\[
 A_k=B(x,(k+1)\rho)\setminus B(x,k\rho).
\]
For $y\in A_k$, reversibility and the Davies-Gaffney-Grigor’yan Lemma give
\[
 P_t(x,y)=m(y)h_t^x(y)
 \le\sqrt{\frac{m(y)}{m(x)}}e^{-\zeta_t(\dist(x,y))}
 \le\sqrt{d_A}\,e^{-\zeta_t(k\rho)}.
\]
Since counting measure is bounded by degree volume,
\begin{equation}\label{eq:annulus-Davies-volume}
 \Prob_x(X_t\in A_k)
 \le \sqrt{d_A}\exp\{B((k+1)\rho)^\gamma-\zeta_t(k\rho)\}.
\end{equation}

First assume $k\rho\le t$.  Then
$\zeta_t(k\rho)\ge c_0 k^2\rho^2/t$.  For $k\ge1$,
$(k+1)^\gamma\le2^\gamma k^\gamma\le2^\gamma k^2$, and hence
\[
 B((k+1)\rho)^\gamma-c_0\frac{k^2\rho^2}{t}
 \le k^2t^{\gamma/(2-\gamma)}
       \{2^\gamma BL^\gamma-c_0L^2\}.
\]
Because $L^{2-\gamma}=C_*(1+B)$,$$2^\gamma BL^\gamma-c_0L^2\leq  \frac{2^\gamma BL^2}{C_*(1+B)}-c_0L^2\leq -\frac{c_0}{2} L^2.$$   Thus
\begin{equation}\label{eq:near-annulus-tail}
 \Prob_x(X_t\in A_k)
 \le \sqrt{d_A}\exp\{-\frac{c_0}{2} k^2L^2t^{\gamma/(2-\gamma)}\}\leq\sqrt{d_A}\exp\{-\frac{c_0}2 k^2 \}.
\end{equation}

Next assume $k\rho>t$ and put $s=k\rho$.  Since
\[
 T^{1-\gamma}=L^{2-\gamma}=C_*(1+B),
\]
we have $s^{1-\gamma}\ge C_*(1+B)$ and 
$\log(1+s/t)\ge\log2$.  Thus, 
\[
 B(2s)^\gamma\leq B2^\gamma \frac{s}{C_*(1+B)}\le\frac{c_0}2 s\log(1+s/t).
\]
As $(k+1)\rho\le2k\rho=2s$, equation
\eqref{eq:annulus-Davies-volume} and the far part of
\eqref{eq:davies-zeta-lower} yield
\begin{equation}\label{eq:far-annulus-tail}
 \Prob_x(X_t\in A_k)
 \le \sqrt{d_A}\exp\{-\frac{c_0}{2} s\log(1+s/t)\}
 \le \sqrt{d_A}e^{-\frac{c_0 \log 2}{2} k}.
\end{equation}

Finally,
\[
\begin{aligned}
 \E_x\dist(x,X_t)^2
 &\le \rho^2+
 \sum_{k\ge1}(k+1)^2\rho^2\Prob_x(X_t\in A_k)\\
 &\le \rho^2(1+\sqrt{d_A}\sum_{k\ge 1}(k+1)^2(e^{-\frac{c_0\log 2}2 k}+e^{-\frac{c_0}2 k^2}))\\
 &\le C'(A)\rho^2
 =C'(A)L^2t^{2/(2-\gamma)},
\end{aligned}
\]
where  $C'(A)$ is a constant only depending on $A$.

The near sum is controlled by \eqref{eq:near-annulus-tail}, and the far
sum by \eqref{eq:far-annulus-tail}; both are uniform because $L,\rho\ge1$.
Since $L^2=[C_*(1+B)]^{2/(2-\gamma)}$, this is
\eqref{eq:volume-moment-sharp}.
\end{proof}

\subsection*{Bootstrap to Subexponential Bounds }

\begin{theorem}[Entropy--volume--displacement loop]
\label{thm:finite-stage}
Let  $G=(V,E)$ be a graph satisfying $\mathrm{LH}(A)$. For any integer
$n\geq 0$, there are constants $M_n,H_n,B_n\ge1$, depending only on $A$ and $n$, such that, for all $x\in V$ and $t,R\ge1$,
\begin{align*}
 \E_x\dist(x,X_t)^2
 &\le M_n t^{1+1/(n+1)},\\
 \mathcal H^*(t)
 &\le H_n t^{1/(n+2)},\\
 \log\Vol(B(x,R))
 &\le B_nR^{2/(n+3)}.
\end{align*}

Then one may choose the stage constants so that
\begin{equation}\label{eq:constant-stage}
 \log(1+M_n)+\log(1+H_n)+\log(1+B_n)
 \le C(n+2)\log(e+n),
\end{equation}
where $C=C(A)$ is universal constant.
\end{theorem}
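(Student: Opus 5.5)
We argue by induction on $n$, running the loop displacement $\Rightarrow$ entropy $\Rightarrow$ volume $\Rightarrow$ displacement once per stage and tracking the constants.

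\textbf{Base case $n=0$.} Here we need $\E_x\dist(x,X_t)^2\le M_0t^{2}$. This holds with $M_0=2$, since $\dist(x,X_t)$ is dominated by a Poisson$(t)$ variable, which gives $t^2+t\le 2t^2$ for $t\ge1$.

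\textbf{Inductive step.} Suppose $\E_x\dist(x,X_t)^2\le M_nt^{\beta}$ with $\beta=1+1/(n+1)\in(1,2]$. Then $a=1/\beta=(n+1)/(n+2)$ and $\eta=1/(n+2)$.

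1. Theorem~\ref{prop:moment-entropy-sharp} gives $\mathcal H^*(t)\le H_nt^{1/(n+2)}$ with
\[
H_n=C_E(n+2)M_n^{(n+1)/(n+2)}.
\]
2. Theorem~\ref{cor:entropy-volume-sharp} with $\eta=1/(n+2)$ gives $\gamma=2\eta/(1+\eta)=2/(n+3)$, hence $\log\Vol(B(x,R))\le B_nR^{2/(n+3)}$ with
\[
B_n=C_VH_n^{(n+2)/(n+3)}.
\]
3. Theorem~\ref{prop:volume-moment-sharp} requires $\gamma\le 2/3$, which holds for every $n\ge0$. Since $2/(2-\gamma)=(n+3)/(n+2)=1+1/(n+2)$, it produces $M_{n+1}=C_D(1+B_n)^{(n+3)/(n+2)}$ for the next stage.

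\textbf{Constants.} Set $\ell_n=\log(1+M_n)$. Each constant in the chain enters only through a power with exponent at most $1$ (or slightly above $1$), times a factor $C(n+2)$. Combining the three steps, the exponents compose to
\[
\frac{n+1}{n+2}\cdot\frac{n+2}{n+3}\cdot\frac{n+3}{n+2}=\frac{n+1}{n+2}<1,
\]
up to additive constants coming from the $1+\cdot$ terms. So we expect a recursion of the form
\[
\ell_{n+1}\le \frac{n+1}{n+2}\,\ell_n+C\log(e+n).
\]
Multiplying by $(n+2)$ and setting $u_n=(n+1)\ell_n$ gives $u_{n+1}\le u_n+C(n+2)\log(e+n)$. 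Summing yields only $u_n\lesssim n^2\log n$, i.e.\ $\ell_n\lesssim n\log n$, which is still consistent with the target \eqref{eq:constant-stage}. Applying the same bookkeeping to $H_n$ and $B_n$, whose logarithms are at most $\ell_n+C\log(n+2)$, gives \eqref{eq:constant-stage}.

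\textbf{Main obstacle.} The delicate step is the constant bookkeeping. Without care, the powers $(1+B)^{(n+3)/(n+2)}$ and $M^{a}$ could compound and make the constants grow superexponentially in $n$, destroying the later optimization.

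To control this, write $\log(1+xy)\le\log(1+x)+\log(1+y)$ and $\log(1+x^p)\le p\log(1+x)$ for $p\ge1$. Also absorb the additive terms from Lemma~\ref{lem:entropy-linear} into $C(A)$. The point is that $C_E$, $C_V$, $C_D$ depend only on $A$, and the only $n$-dependence is the $1/\eta=n+2$ factor, which contributes $\log(n+2)$ per step.

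A crude fallback recursion $\ell_{n+1}\le \ell_n+C\log(e+n)$ already gives $\ell_n\le C(n+1)\log(e+n)$. This is exactly the stated bound \eqref{eq:constant-stage}, so even a careless accounting with contraction factor $1$ suffices.
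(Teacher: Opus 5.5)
Your proposal is correct and follows the paper's proof: the base case $M_0=2$ comes from the Poisson bound, each stage is one pass through Theorems~\ref{prop:moment-entropy-sharp}, \ref{cor:entropy-volume-sharp} and \ref{prop:volume-moment-sharp}, and the cancellation $\frac{n+2}{n+3}\cdot\frac{n+3}{n+2}=1$ yields the paper's recursion $\log(1+M_{n+1})\le\frac{n+1}{n+2}\log(1+M_n)+\log K_A+\log(n+2)$. Your observation that the contraction factor buys nothing beyond $\ell_n=O(n\log n)$ is right, but the factor-$1$ fallback is legitimate only because of that cancellation; for the sub-unit powers use $\log(1+x^p)\le p\log(1+x)+\log 2$ for $p\le1$, since the inequality you quote holds only for $p\ge1$.
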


\begin{proof}
Now, we are in the right place to build the Entropy--volume--displacement loop. The Poisson construction gives
$\E_x\dist(x,X_t)^2\le t^2+t\le2t^2$, so take $M_0=2$. 

Applying Theorem \ref{prop:moment-entropy-sharp}, Theorem \ref{cor:entropy-volume-sharp}, and Theorem \ref{prop:volume-moment-sharp} sequentially, it gives a loop. Continue the looping.

We write, in the $n$-loop,  there are constants $M_n,H_n,B_n\ge1$ and $\beta_n,\eta_n,\gamma_n>0$, such that,
for all $x\in V$ and $t,r\ge1$,
\begin{align*}
 \E_x\dist(x,X_t)^2
 &\le M_n t^{\beta_n}\\
 \mathcal H^*(t)
 &\le H_n t^{\eta_n}\\
 \log\Vol(B(x,r))
 &\le B_nr^{\gamma_n}.
\end{align*}

Then, starting from $\beta_0=2$ at $0$-loop, we have
\[
\beta_n=\frac{n+2}{n+1},\qquad
 \eta_n=\frac1{n+2},\qquad
 \gamma_n=\frac2{n+3}.
\]
In the following, we trace the constants $M_n,H_n,B_n\ge1$. Theorem~~\ref{prop:moment-entropy-sharp} gives
\begin{equation}\label{eq:H-stage}
 H_n=\frac{C_{\mathrm E}(A)}{\eta_n}(1+M_n)^{1/{\beta_n}}
 =C_{\mathrm E}(A)(n+2)(1+M_n)^{(n+1)/(n+2)},
\end{equation}
Theorem~\ref{cor:entropy-volume-sharp} gives
\begin{equation}\label{eq:B-stage}
 B_n\le C_{\mathrm V}(A)H_n^{1/(1+\eta_n)} \leq C_V(A)\left[C_E(A)(n+2)(1+M_n)^{(n+1)/(n+2)}\right]^{(n+2)/(n+3)}.
\end{equation}
Theorem~\ref{prop:volume-moment-sharp} then gives
\begin{align*}
     M_{n+1}&=C_{D}(A)(1+B_n)^{2/(2-\gamma_n)}\\
     &= C_{D}(A)(1+B_n)^{(n+3)/(n+2)}\\
     & \leq C_E(A)C_D(A)(1+C_V(A))^{3/2}(n+2)(M_n+1)^{(n+1)/(n+2)}
\end{align*}
Pick $K_A:= C_E(A)C_D(A)(1+C_V(A))^{3/2}+1$, we have
\begin{equation}\label{M-stage}
    M_{n+1}+1\leq K_A(n+2)(M_n+1)^{(n+1)/(n+2)}.
\end{equation}
It gives
\[
\log(1+M_{n+1})\le\frac{n+1}{n+2}\log(1+M_n)+\log K_A+\log(n+2).
\]
  Therefore
$ \log(1+M_n)
 \le C (n+2)\log(e+n),$ for some sufficiently large constant $C=C(A)$.
 Equations
\eqref{eq:H-stage} and \eqref{eq:B-stage} give the same logarithmic bound
for $H_n$ and $B_n$.
\end{proof}

\begin{proof}[Proof of Theorem \ref{thm:quasi-intro-lh}]
From Theorem \ref{thm:finite-stage}, we have
\begin{equation}\label{eq:FinalDislacement}
 \log\left(\frac{\E_x\dist(x,X_t)^2}{t}\right)
 \le C(A)n\log(e+n)+\frac{\log t}{n+1},
\end{equation}
\begin{equation}\label{eq:FinalEntropy}
  \log \mathcal H^*(t)
 \le C(A)n\log(e+n)+\frac{\log t}{n+2},  
\end{equation}
and
\begin{equation}\label{eq:FinalVolume}
    \log\log \Vol(B(x,r))
 \le C(A)n\log(e+n)+\frac{2\log r}{n+3}.
\end{equation}

To estimate the right-hand side of \eqref{eq:FinalDislacement}, pick $n=n(t):=3\vee\lfloor\sqrt{\log(t)/\log\log(t)}\rfloor$.

If $\lfloor\sqrt{\log(t)/\log\log(t)}\rfloor\leq 3$, then $t$ is bounded and \eqref{eq:FinalDislacement} is absorbed by possibly enlarged $C(A)$. Otherwise, 
\begin{equation*}
    Cn\log(e+n)+\frac{\log t}{n+1}\leq C\left[n(t)\log n(t)+\frac{\log t}{n(t)+1}\right]\leq C\sqrt{ \log (t) \log\log (t)}. 
\end{equation*}

The estimate of entropy \eqref{eq:FinalEntropy} and volume \eqref{eq:FinalVolume} are similar. This completes the proof.

\end{proof}

\begin{proof}[Proof of Corollary \ref{cor:discretedisplacement}]
To distinguish the two time parametrizations within the proof, temporarily
write \((Y_k)_{k\ge0}\) for the discrete-time lazy walk. Let
\((N_t)_{t\ge0}\) be an independent Poisson process of rate \(2\).
By definition,
\[
    Y_{N_t}
    \stackrel{\mathrm{law}}{=}
    X_t,
\]
where \((X_t)_{t\ge0}\) is the continuous-time walk generated by
\(L=P-I\).

Fix \(n\ge 2e^e\) and set \(t=n/2\). Then
\[
    N_t\sim\operatorname{Poisson}(n).
\]
Since every lazy step moves by graph distance at most one, for every
integer \(m\ge0\),
\[
    \dist(Y_n,Y_m)\le |n-m|.
\]
Consequently, pathwise,
\[
    \dist(x,Y_n)
    \le
    \dist(x,Y_{N_t})+|N_t-n|.
\]
taking expectations gives
\[
\begin{aligned}
    \mathbb E_x\dist(x,Y_n)^2
    &\le
    2\mathbb E_x\dist(x,Y_{N_t})^2
    +2\mathbb E|N_t-n|^2  \\
    &=
    2\mathbb E_x\dist(x,X_{n/2})^2+2n,
\end{aligned}
\]
because a Poisson random variable of mean \(n\) has variance \(n\).

Applying Theorem~\ref{thm:quasi-intro} at time \(n/2\), we obtain
\[
\begin{aligned}
    \mathbb E_x\dist(x,Y_n)^2
    &\le
    n\exp\!\left[
        C_d\sqrt{
            \log(n/2)\,\log\log(n/2)
        }
    \right]
    +2n \\
    &\le
    n\exp\!\left[
        C'_d\sqrt{\log n\,\log\log n}
    \right]
\end{aligned}
\]
for some \(C'_d<\infty\).

Finally, the remaining range \(e^e\le n<2e^e\) is absorbed by enlarging
\(C'_d\), using the elementary bound
\[
    \dist(x,Y_n)\le n.
\]

\end{proof}

\section{Diffusive displacement implies doubling}\label{sec:deficit}
In following section, we further discuss the method we apply. As an application, we prove that diffusive displacement implies volume doubling for reversible Markov chain under $\mathrm{LH}(A)$.

\begin{proposition}\label{prop:ball-deficit}
Let \((M,d)\) be a metric space with measurable balls, let
\(K\) be a Markov kernel on \(M\), and let
\(m_0,m_1\) be \(\sigma\)-finite measures satisfying
\begin{equation}\label{eq:transported-measure}
        m_1(A)=m_0K(A):=\int_M K(y,A)\,m_0(dy)
        \qquad\text{for every measurable }A.
\end{equation}
Fix \(x\in M\) and \(0<r\le R\).  Suppose 
\[
        K(x,B(x,r))>0
\]
and suppose
\begin{equation}\label{eq:ball-deficit-entropy}
        \sup_{y\in B(x,R)}
        D(K(x,\cdot)\|K(y,\cdot))\le H<\infty.
\end{equation}
Then
\begin{equation}\label{eq:ball-deficit-conclusion}
        m_0(B(x,R))
        \le
        \exp\left\{\frac{H+\log2}{K(x,B(x,r))}\right\}
        m_1(B(x,r)).
\end{equation}
\end{proposition}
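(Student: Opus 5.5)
Set $B_r=B(x,r)$, $B_R=B(x,R)$ and $p:=K(x,B_r)>0$. The aim is to compare, for each $y\in B_R$, the mass $K(y,B_r)$ with $p$, and then integrate against $m_0$ over $B_R$ using \eqref{eq:transported-measure}.

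\emph{Step 1 (pointwise lower bound).} Fix $y\in B_R$ and put $q:=K(y,B_r)$. Apply Proposition~\ref{prop:data-processing} to the map $z\mapsto \1_{B_r}(z)$. This gives
\[
H\ge D(K(x,\cdot)\|K(y,\cdot))\ge p\log\frac pq+(1-p)\log\frac{1-p}{1-q}.
\]
As in the proof of Proposition~\ref{prop:LH-degree}, use $1-q\le 1$ and the entropy bound $-p\log p-(1-p)\log(1-p)\le\log 2$ to get
\[
H\ge p\log\frac1q-\log2 .
\]
Hence
\[
q=K(y,B_r)\ge \exp\Big\{-\frac{H+\log 2}{p}\Big\}.
\]
This also covers $q=0$: in that case the divergence is infinite, contradicting \eqref{eq:ball-deficit-entropy}.

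\emph{Step 2 (integration).} By \eqref{eq:transported-measure} and nonnegativity of the integrand,
\[
m_1(B_r)=\int_M K(y,B_r)\,m_0(dy)\ge\int_{B_R}K(y,B_r)\,m_0(dy)\ge e^{-(H+\log2)/p}\,m_0(B_R).
\]
Rearranging gives \eqref{eq:ball-deficit-conclusion}. If $m_1(B_r)=\infty$ the claim is trivial; otherwise the display shows $m_0(B_R)<\infty$, so the rearrangement is legitimate. Measurability of $y\mapsto K(y,B_r)$ holds because $K$ is a kernel and balls are measurable.

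The only delicate point is the binary-entropy bound in Step 1, together with making sure the $\log 2$ absorbs the $(1-p)\log\frac{1-p}{1-q}$ term. Since $\log\frac{1-p}{1-q}\ge\log(1-p)$, the combined binary-entropy loss is at most $\log2$. The rest is routine.
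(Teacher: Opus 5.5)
Your proposal is correct and follows essentially the same route as the paper: data processing through $z\mapsto\1_{B(x,r)}(z)$, the binary-entropy bound $h(a)\le\log 2$ to get the pointwise lower bound $K(y,B(x,r))\ge \exp\{-(H+\log2)/K(x,B(x,r))\}$, then integration against $m_0$ over $B(x,R)$ using \eqref{eq:transported-measure}. Your remarks on the case $K(y,B(x,r))=0$ and on finiteness before rearranging are refinements the paper leaves implicit.
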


\begin{proof}
Write
\[
        a:=K(x,B(x,r)),
        \qquad
        b_y:=K(y,B(x,r)), \qquad  y\in B(x,R).
\]

Apply  Proposition \ref{prop:data-processing} to
the measurable map \(z\mapsto\1_{B(x,r)}(z)\):
\begin{equation}\label{eq:bernoulli-deficit}
\begin{aligned}
         D(K(x,\cdot)\|K(y,\cdot))
        &\ge D\!\left((\1_{B(x,r)})_\#K(x,\cdot)
        \,\middle\|\,(\1_{B(x,r)})_\#K(y,\cdot)\right)\\
        &=a\log\frac{a}{b_y}
        +(1-a)\log\frac{1-a}{1-b_y}                 \\
        &=a\log\frac1{b_y}
          +(1-a)\log\frac1{1-b_y}
          -h(a)                                      \\
        &\ge a\log\frac1{b_y}-\log2,
\end{aligned}
\end{equation}
where
\(h(a)=-a\log a-(1-a)\log(1-a)\le\log2\).  Assumption
\eqref{eq:ball-deficit-entropy}
therefore gives
\begin{equation}\label{eq:ball-probability-lower}
        K(y,B(x,r))
        \ge
        \exp\left\{-\frac{H+\log2}{K(x,B(x,r))}\right\},
        \qquad y\in B(x,R).
\end{equation}
In particular, all these probabilities are positive.  Integrating
\eqref{eq:ball-probability-lower}, using
\eqref{eq:transported-measure}, and applying Tonelli's theorem yields
\[
\begin{aligned}
        m_1(B(x,r))
        &=\int_M K(y,A_r)\,m_0(dy) \\
        &\ge
        \exp\left\{-\frac{H+\log2}{K(x,B(x,r))}\right\}
        m_0(B(x,R)),
\end{aligned}
\]
which is \eqref{eq:ball-deficit-conclusion}.  
\end{proof}

\begin{proof}[Proof of Theorem \ref{prop:retention-doubling}]

 Fix $x\in V$ and $r\ge\left(2K\right)^{1/\alpha}$, and set
$$t=\left(\frac{r}{\left(2K\right)^{1/\alpha}}\right)^2\ge1.$$ Markov inequality and \eqref{eq:AlphaDisplacement} gives
\begin{equation}\label{eq:retention-inner-ball}
 P_t(x,B(x,r))\ge  \Prob_x\!\left(\dist(x,X_t)\le
 \left(2K\right)^{1/\alpha}\sqrt t\right)\ge \frac{1}{2}.
\end{equation}
For every $y\in B(x,2r)$, $\mathrm{LH}(A)$ yields
\begin{equation*}
 D(P_t(x,\cdot)\|P_t(y,\cdot))
 \le A(4\left(2K\right)^{2/\alpha}+2\left(2K\right)^{1/\alpha})=:H_0.
\end{equation*}
Detailed balance implies $mP_t=m$. Proposition \ref{prop:ball-deficit}, with inner radius $r$ and outer radius
$2r$, gives
\[
 \Vol(B(x,2r))
 \le
 \exp\left\{2{H_0+2\log2}\right\}\Vol(B(x,r)).
\]
For $0 \le r<\left(2K\right)^{1/\alpha}$,  Proposition \ref{prop:LH-degree} gives a uniform bound
depending only on $A,K,\alpha$.  Combining the two ranges proves doubling.

\end{proof}

\begin{remark}
    In Riemannian geometry, if (M, g) is a complete Riemannian manifold with $Ric\geq 0$, Laplacian comparison theorem with model spaces gives volume doubling  and diffusive displacement. However, in general, a reversible Markov chain with non-negative Ollivier-Ricci curvature may not exhibit a diffusive displacement.
    
    Indeed, let $0<p<q,~p+q=1$ and the biased birth--death chain $P$ on
$\mathbb Z$ with $$P(n,n+1)=p, \qquad P(n,n-1)=q, \text{ for every }  n\in \mathbb Z.$$ Then, $P$ is reversible with respect to
$m(k)=(p/q)^k$ and has non-negative  Ollivier--Ricci curvature. By Theorem \ref{matrixharnack}, it satisfies $LH(A)$ for some large $A$.
Nevertheless,
\[
 \mathbb E_x d(x,X_t)^2
 =
 t+(p-q)^2t^2.
\]
Hence its displacement
from the initial point is not diffusive when $p\neq q$.   Thus, in
the weighted setting, a diffusive-displacement hypothesis is an assumption rather than a consequence of nonnegative Ollivier--Ricci curvature.
\end{remark}

\section{Ollivier curvature implies log-Harnack}
\label{sec:curvature-LH}

We now prove the log-Harnack inequality under non-negative Ollivier-Ricci curvature. We first construct a reference Markovian coupling and then add an extra transition rate that decreases the distance between its two coordinates. Since the additional jump leaves the first coordinate unchanged, the law of the entire first-coordinate path is preserved, while the change of measure incurs an entropy cost. This is a discrete analogue of Wang's coupling-by-change-of-measure method for log-Harnack inequalities. We refer the reader to~\cite{coupling} for a detailed account of this method.

In this final section, we recall that \(P\) denotes an arbitrary irreducible Markov kernel
on a countable state space \(V\).  The one-state case is immediate, so we
assume \(|V|\ge2\).  Put
\begin{equation*}
        L:=P-I,
        \qquad
        P_t:=e^{tL},
        \qquad
        \Pmin:=
        \inf_{\substack{x\ne y\\P(x,y)>0}}P(x,y).
\end{equation*}
We call  \((P_t)_{t\ge0}\)  its heat semigroup.  
We assume $P$ is \textit{weak-reversible}:
\begin{equation*}
        P(x,y)>0\quad\Longleftrightarrow\quad P(y,x)>0
\end{equation*}
and define the transition distance
\begin{equation*}
        \dist(x,y):=\min\{k\ge0:P^k(x,y)>0\}.
\end{equation*}
For probability measures \(\mu,\nu\), write
\[
        W_1(\mu,\nu)
        :=\inf_{\pi}\sum_{u,v}\dist(u,v)\pi(u,v),
\]
where the infimum runs over their couplings. 
It is convenient to define the lazy kernel $\widehat P:=\frac12(I+P)$ .
Finally, we say that $P$ has non-negative Ollivier--Ricci curvature if it is a contraction under $W_1$, that is
if
\[
    W_1\bigl(\widehat P(x,\cdot),\widehat P(y,\cdot)\bigr)
    \leq 1,
    \qquad\text{whenever }x\sim y.
\]
By picking a geodesic along $x$ and $y$,  we have 
\[
    W_1\bigl(\widehat P(x,\cdot),\widehat P(y,\cdot)\bigr)
    \leq \dist(x,y).
\]
The aim of this section is to prove the following theorem.
\begin{theorem}\label{matrixharnack}
    If  $P$ is a weak-reversible Markov kernel  with non-negative Ollivier-Ricci curvature and $\Pmin>0$, then its heat semigroup satisfies the following log-Harnack inequality:
        \begin{equation*}
D\bigl(P_T(x,\cdot)\|P_T(y,\cdot)\bigr)
\le
\frac{32}{P_{\min}}
\left(
    \frac{\dist(x,y)^2}{T}
    +
    \frac{\dist(x,y)}{\sqrt T}
\right).
\end{equation*}
\end{theorem}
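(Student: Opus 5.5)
We follow the coupling-by-change-of-measure strategy described at the start of this section: build a Markovian coupling $(X_t,Y_t)$ with $X_0=x$, $Y_0=y$, such that $X$ is a copy of the walk from $x$, and then tilt the law of $Y$ so that it meets $X$ by time $T$. The entropy cost of the tilt is then transferred to the marginals.

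\textbf{Step 1: the reference coupling.} For each pair $u\ne v$, fix an optimal coupling $\pi_{u,v}$ of $\widehat P(u,\cdot)$ and $\widehat P(v,\cdot)$. When $u\sim v$ it satisfies $\sum \dist\,d\pi_{u,v}\le 1$; in general we have $\le\dist(u,v)$, obtained via geodesic gluing. The generator $L=2(\widehat P-I)$ then gives a coupling generator $\mathcal L f(u,v)=2\sum_{u',v'}\pi_{u,v}(u',v')[f(u',v')-f(u,v)]$, with both coordinates glued once they coincide. Under $\mathcal L$, the process $\rho_t:=\dist(X_t,Y_t)$ is a supermartingale, since $\mathcal L\dist\le 0$.

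\textbf{Step 2: the extra jump.} Add to the $Y$-coordinate an additional jump, at rate $\lambda_t$, from $v$ to a neighbour $v'$ of $v$ on a geodesic towards $u=X_t$. This lowers $\rho$ by one, while the first coordinate is untouched. Call the modified law $\mathbb Q$ and the reference law $\mathbb P$. We compare path laws of $Y$ alone via Girsanov for jump processes. The $Y$-marginal under $\mathbb P$ is the walk from $y$. Under $\mathbb Q$, the jump rate from $v$ to $v'$ is increased from $P(v,v')\ge \Pmin$ to $P(v,v')+\lambda$. The relative entropy of the path laws is therefore
\[
\E_{\mathbb Q}\int_0^T \Bigl[(P+\lambda)\log\tfrac{P+\lambda}{P}-\lambda\Bigr]\,ds\le \E_{\mathbb Q}\int_0^T \frac{\lambda_s^2}{\Pmin}\,ds .
\]
To be careful, the tilt is applied to the joint process given the $X$-path. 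Conditioning on $X$ and using the chain rule gives the same bound for $D(\mathrm{Law}_{\mathbb Q}(X,Y)\|\mathrm{Law}_{\mathbb P}(X,Y))$.

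Then $\mathrm{Law}_{\mathbb Q}(Y_T)=P_T(x,\cdot)$ provided $Y_T=X_T$ $\mathbb Q$-a.s. Since $\mathrm{Law}_{\mathbb P}(Y_T)=P_T(y,\cdot)$, data processing (Proposition~\ref{prop:data-processing}) gives
\[
D(P_T(x,\cdot)\|P_T(y,\cdot))\le \frac{1}{\Pmin}\E_{\mathbb Q}\int_0^T\lambda_s^2\,\1_{\{\rho_s>0\}}\,ds .
\]
The roles of $x$ and $y$ are swapped relative to the statement, but the statement is symmetric in $\dist(x,y)$, so this is harmless.

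\textbf{Step 3: choice of rate.} Under $\mathbb Q$, the distance $\rho$ is a supermartingale with extra drift $-\lambda_t$, so $\rho_t+\int_0^t\lambda_s\,ds$ is a supermartingale. We need $\rho_T=0$ a.s. A deterministic rate will not force this, because $\rho$ can increase through noise. We therefore use a feedback rate, e.g. $\lambda_t=\frac{c\,\rho_t}{T-t}$ on $[0,T)$, or a rate that is a large constant on a final window, and we must show that hitting $0$ happens before $T$.

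This is the main obstacle. A cleaner approach is to avoid forcing exact coupling. Take $\lambda_t\equiv\lambda:=\dist(x,y)/T'$ on $[0,T/2]$, and then on $[T/2,T]$ pay for any residual distance using an elementary one-step bound. Alternatively, stop the tilt once $\rho=0$ and use $\E\int\lambda\1_{\rho>0}$ together with the supermartingale, so that $\E_{\mathbb Q}\rho_{t}\le \rho_0-\int\lambda\,\Prob(\rho>0)$.

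The two terms in the statement suggest the following. The term $\dist^2/T$ comes from a constant rate $\lambda\approx 2\dist(x,y)/T$, costing $\lambda^2T/\Pmin$. The term $\dist/\sqrt T$ accounts for the fluctuation of $\rho$ under $\mathbb Q$, whose quadratic variation is of order $T$. Concretely, I would set $\lambda_t=\frac{2(\rho_t+\sqrt T)}{T}$, bounded by $\frac{2(\dist(x,y)+\sqrt T)}{T}$ in expectation via the supermartingale. The $\sqrt T$ shift makes $\rho$ reach $0$ before $T$ with high probability, by optional stopping on $\rho_t+\int\lambda$. Expanding $\E\int\lambda^2\le \frac{8}{T^2}\int(\E\rho_s^2+T)$, and controlling $\E\rho_s^2\lesssim \dist^2+\dist\sqrt T+s$ through the jump sizes $\le 2$, gives $\frac{C}{\Pmin}(\dist^2/T+\dist/\sqrt T+1)$.

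The additive constant has to be removed to reach the stated form. When $\dist(x,y)<\sqrt T$ one should rescale, taking the shift to be $\dist(x,y)$ instead of $\sqrt T$. Verifying this bookkeeping to reach the constant $32$ is where I expect the real work to lie.
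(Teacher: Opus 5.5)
\textbf{Main gap: Step 3 is not bookkeeping; the coupling is never actually forced.} Your data-processing step needs $Y_T=X_T$ $\mathbb Q$-almost surely, since otherwise $\Law_{\mathbb Q}(Y_T)\neq P_T(x,\cdot)$ and nothing follows.

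Any rate that stays bounded up to $T$, such as $\lambda_t=2(\rho_t+\sqrt T)/T$, cannot achieve this. Before the first jump the total jump rate is bounded, so the event that no jump at all occurs on $[0,T]$ has positive probability, and on it $\rho_T=\dist(x,y)\ge1$.

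The fallback of ``paying for the residual distance'' has no elementary bound to appeal to. Bounding $D(P_{T/2}(X_{T/2},\cdot)\|P_{T/2}(Y_{T/2},\cdot))$ is exactly the statement you are proving.

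Your estimate $\E\rho_s^2\lesssim\dist^2+\dist\sqrt T+s$ also makes $T^{-2}\int_0^T\E\rho_s^2\,ds$ of order one whatever shift you choose. The $s$ comes from the reference noise, so rescaling the shift does not remove the additive constant.

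A rate must blow up at $T$ to force coupling almost surely, and then you need a mechanism that controls the cost. That mechanism is the paper's verification argument. Set $u=T-s$, $m=\dist(a,b)$, $B=32/\Pmin$, and use the blowing-up potential $V_s(a,b)=B(m^2/u+m/\sqrt u)$. Put $H=V_s(a,b)-V_s(a,b^-)$ and take the Legendre-optimal rate $\lambda=\kappa(e^{H}-1)$, for which
\[
\lambda\bigl(V_s(a,b^-)-V_s(a,b)\bigr)+\Psi_\kappa(\lambda)=\kappa\bigl(1+H-e^{H}\bigr)\le-\tfrac{\Pmin}{2}H^2 .
\]
This absorbs $(\partial_s+\mathcal A_0)V_s\le B\bigl(m^2/u^2+m/(2u^{3/2})+16/u\bigr)$. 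Localized Dynkin then gives $\E_{\mathbf Q}V_t(X_t,Y_t)+D(\mathbf Q|_{\mathcal F_t}\|\mathbf P^0|_{\mathcal F_t})\le V_0(x,y)$ for $t<T$. Since $V_t\ge B/\sqrt{T-t}$ off the diagonal, the same inequality gives $\mathbf Q(\tau>t)\le V_0(x,y)\sqrt{T-t}/B\to0$. So one inequality yields both almost-sure coupling before $T$ and the exact bound $V_0(x,y)$, with no additive constant.

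This rate grows like $e^{c/\sqrt{T-s}}$. In that regime your bound $\Psi_\kappa(\lambda)\le\lambda^2/\kappa$ is too lossy: it is of order $\kappa e^{2H}$, while the drift gain is only of order $\kappa He^{H}$, so the cancellation fails. The exact form of $\Psi$ is essential.

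\textbf{Secondary error in Step 2.} Your claim that conditioning on $X$ gives the same bound for the joint path laws is false for a generic optimal coupling. The joint transition $(u,v)\to(u,v')$, with the first coordinate frozen, can have reference rate $0$; the translation coupling on $\mathbb Z$ is an example. Then the tilted law is not absolutely continuous on pair paths.

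The paper avoids this with the forced-gate lemma. Working with the lazier kernel $K=\frac12(I+\widehat P)$, every coupling has $\pi(a,b)\ge\frac34-\frac14=\frac12$. This lets one reroute mass so that $\pi_{a,b}(a,b^-)=K(b,b^-)$ without increasing transport cost. The geodesic-drift channel then has joint reference rate $P(b,b^-)\ge\Pmin$, and Girsanov becomes a one-channel computation on the pair process. With $\widehat P$, the same mass count only gives $\pi(a,b)\ge0$, so this rerouting is unavailable.

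Your $Y$-marginal route can also be made to work, but only through an explicit projection argument. In the joint filtration, the intensity of $Y$ toward $v'$ is $P(v,v')+\lambda$. Its predictable projection onto the filtration of $Y$ is the $Y$-intensity. Convexity of $\Psi$ then bounds the marginal path entropy by $\E_{\mathbb Q}\int\Psi_{P(Y,v')}(\lambda_s)\,ds$.

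Also, no swap of $x$ and $y$ occurs: your construction already bounds $D(P_T(x,\cdot)\|P_T(y,\cdot))$.
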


Before proving the theorem, we shall do some preparations. The following estimate is classic and we present here for completeness. Comparisons between different $\alpha$-laziness parameters have been widely studied; see, e.g., \cite{Riccicur,Olliveridleness}.
Define
\begin{equation*}
 K:=\frac{1}{2}\left(I+\widehat P\right)=I+\frac14L.
\end{equation*}

\begin{lemma}
\label{lem:K-contraction-compact}
For all $a,b\in V$,
\begin{equation*}
 W_1(K(a,\cdot),K(b,\cdot))\le\dist(a,b).
\end{equation*}
\end{lemma}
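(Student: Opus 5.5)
We need to show that $K=\frac12(I+\widehat P)$ is a $W_1$-contraction, i.e. $W_1(K(a,\cdot),K(b,\cdot))\le\dist(a,b)$. The plan is to reduce to adjacent pairs and then handle an edge by mixing two couplings.

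\textbf{Reduction to edges.} Since $W_1$ satisfies the triangle inequality, it suffices to treat the case $a\sim b$. Indeed, for general $a,b$ we pick a geodesic $a=v_0\sim v_1\sim\cdots\sim v_k=b$ with $k=\dist(a,b)$. Then
\[
W_1(K(a,\cdot),K(b,\cdot))\le\sum_{i=1}^kW_1(K(v_{i-1},\cdot),K(v_i,\cdot))\le k .
\]
The case $a=b$ is trivial. So from now on fix $a\sim b$.

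\textbf{Mixing for an edge.} Write
\[
K(a,\cdot)=\tfrac12\delta_a+\tfrac12\widehat P(a,\cdot),\qquad K(b,\cdot)=\tfrac12\delta_b+\tfrac12\widehat P(b,\cdot).
\]
Couple the two pieces separately: $\delta_a$ with $\delta_b$ at cost $\dist(a,b)=1$, and $\widehat P(a,\cdot)$ with $\widehat P(b,\cdot)$ via an optimal coupling, whose cost is at most $1$ by the curvature hypothesis. The half–half mixture of these two couplings is a coupling of $K(a,\cdot)$ and $K(b,\cdot)$. Its cost is at most $\tfrac12\cdot1+\tfrac12\cdot1=1$. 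Equivalently, this is joint convexity of $W_1$ under mixtures with the same weights.

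\textbf{Main obstacle.} The only subtlety is that an optimal coupling of $\widehat P(a,\cdot)$ and $\widehat P(b,\cdot)$ should exist. This is easy to avoid: we only need couplings of cost at most $1+\varepsilon$, which exist by the definition of the infimum, and then let $\varepsilon\to0$. When the supports are finite, as here in the locally finite case, the infimum is in fact attained. No other difficulty arises.
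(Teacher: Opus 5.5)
Your proof is correct and uses the same two ingredients as the paper: convexity of $W_1$ under equal-weight mixtures, and the triangle inequality along a geodesic. The only difference is the order. The paper first extends the curvature bound to $W_1(\widehat P(a,\cdot),\widehat P(b,\cdot))\le\dist(a,b)$ for all pairs by chaining along a geodesic, and then applies convexity once; you apply convexity on edges and then chain along a geodesic.
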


\begin{proof}
By convexity of $W_1$,
$$W_1(K(a,\cdot),K(b,\cdot))\le\frac{1}{2}W_1(\delta_a,\delta_b)+\frac{1}{2}W_1(\widehat P(a,\cdot),\widehat P(b,\cdot))\le \dist(a,b).$$
\end{proof}

For every ordered pair $a\ne b$, choose a support-neighbor
$b^-=b^-(a,b)$ of $b$ satisfying
\begin{equation*}
 \dist(a,b^-)=\dist(a,b)-1.
\end{equation*}
We call $(a,b)\to(a,b^-)$ the \emph{geodesic drift}.  Recall that for finitely supported $\mu,\nu$, the $$W_1(\mu,\nu)
        :=\inf_{\pi}\sum_{u,v}\dist(u,v)\pi(u,v)$$can be attained by minimizers, called optimal couplings. Following  \cite{BD2007,M2023OR,MuenchSalez2022}, we choose an optimal coupling so it contains a "decent" probability to the $(a,b^-)$.
It  creates a controllable transition whose drift does not alter the
first marginal.

\begin{lemma}\label{lem:forced-gate}
For every $a\ne b$, there is a coupling $\pi_{a,b}$ of
$K(a,\cdot)$ and $K(b,\cdot)$ such that
\begin{equation}\label{eq:forced-gate-compact}
 \sum_{u,v}\dist(u,v)\pi_{a,b}(u,v)\le\dist(a,b),
 \qquad
 \pi_{a,b}(a,b^-)=K(b,b^-)=\frac14P(b,b^-).
\end{equation}
\end{lemma}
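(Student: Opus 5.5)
The plan is to build \(\pi_{a,b}\) explicitly from the \(1/2\)-lazy structure of \(K\), and then perform a single mass swap that sends weight onto the pair \((a,b^-)\) without increasing the transport cost. Write \(d:=\dist(a,b)\ge1\) and \(q:=K(b,b^-)=\frac14P(b,b^-)\). The identity \(K(b,b^-)=\frac14P(b,b^-)\) is immediate from \(K=I+\frac14L\) and \(b^-\ne b\), so all the work is in the coupling.

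First I construct a reference coupling with cost at most \(d\). Since \(K=\frac12(I+\widehat P)\), we have \(K(a,\cdot)=\frac12\delta_a+\frac12\widehat P(a,\cdot)\) and \(K(b,\cdot)=\frac12\delta_b+\frac12\widehat P(b,\cdot)\). Let \(\sigma\) be a coupling of \(\widehat P(a,\cdot)\) and \(\widehat P(b,\cdot)\) with cost at most \(d\). Such a \(\sigma\) exists: take an optimal coupling (these are finitely supported measures, so a minimizer exists) and use the bound \(W_1(\widehat P(a,\cdot),\widehat P(b,\cdot))\le\dist(a,b)\) obtained along a geodesic, as noted before Lemma~\ref{lem:K-contraction-compact}. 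Set
\[
\pi_0:=\tfrac12\,\delta_{(a,b)}+\tfrac12\,\sigma .
\]
It is a coupling of \(K(a,\cdot)\) and \(K(b,\cdot)\), and its cost is at most \(\frac12 d+\frac12 d=d\). This is just the proof of Lemma~\ref{lem:K-contraction-compact} made explicit.

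Next I perform the swap. The second marginal of \(\frac12\sigma\) puts mass \(\frac12\widehat P(b,b^-)=\frac14P(b,b^-)=q\) on \(b^-\). So \(\frac12\sigma\) has column mass exactly \(q\) at \(b^-\), carried by pairs \((u,b^-)\) with weights \(\frac12\sigma(u,b^-)\) summing to \(q\). The atom \(\frac12\delta_{(a,b)}\) has mass \(\frac12\ge q\). For each \(u\), I move weight \(w_u:=\frac12\sigma(u,b^-)\) off \((u,b^-)\) and off \((a,b)\), and put it on \((a,b^-)\) and \((u,b)\). This preserves both marginals and keeps all weights non-negative. The cost change for each unit moved is
\[
\dist(a,b^-)+\dist(u,b)-\dist(u,b^-)-\dist(a,b)=-1+\bigl(\dist(u,b)-\dist(u,b^-)\bigr)\le 0 ,
\]
because \(b^-\sim b\) gives \(\dist(u,b)\le\dist(u,b^-)+1\). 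The resulting \(\pi_{a,b}\) therefore has cost at most \(d\).

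The remaining issue is to confirm that \(\pi_{a,b}(a,b^-)\) equals \(q\) exactly, and not more. All mass at \((a,b^-)\) from the \(\sigma\)-part (including \(u=a\)) is moved to \((a,b)\), while exactly \(q\) is placed at \((a,b^-)\). Hence \(\pi_{a,b}(a,b^-)=q\), even in the case \(d=1\), where \(b^-=a\). The step I regard as the main obstacle is choosing the right reference coupling. A swap applied to an arbitrary optimal coupling of \(K(a,\cdot),K(b,\cdot)\) need not be cost-nonincreasing. Splitting off the atom \(\delta_{(a,b)}\) is exactly what makes the triangle inequality at the edge \(b^-\sim b\) close the argument.
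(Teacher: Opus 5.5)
Your proof is correct and is essentially the paper's argument. It uses the same swap $(u,b^-),(a,b)\to(a,b^-),(u,b)$ and the same triangle inequality at the edge $b^-\sim b$. The only difference is how you guarantee mass $\ge\frac12$ at $(a,b)$: you build the coupling as $\frac12\delta_{(a,b)}+\frac12\sigma$. The paper instead starts from an arbitrary coupling of cost $\le\dist(a,b)$ and observes that every coupling satisfies $\pi(a,b)\ge K(a,a)-K(b,V\setminus\{b\})\ge\frac34-\frac14=\frac12$.

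For the same reason, your closing remark is mistaken. The per-unit cost change $-1+\dist(u,b)-\dist(u,b^-)\le 0$ does not depend on which coupling you start from, so the swap is cost-nonincreasing on any optimal coupling. Your special reference coupling is therefore a convenience, not the crux of the argument.
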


\begin{proof}
By Lemma \ref{lem:K-contraction-compact} and compactness, there is a coupling \(\pi\) of $K(a,\cdot)$ and $K(b,\cdot)$ satisfying: 
$$\sum_{u,v}\dist(u,v)\pi(u,v)\le\dist(a,b).$$
If \(\pi(a,b^-)=K(b,b^-)\), there is nothing to prove.  Otherwise set
\[
\frac{1}{4} \ge s:=K(b,b^-)-\pi(a,b^-)>0.
\]
The second marginal gives
\[
        \sum_{u\neq a}\pi(u,b^-)=s.
\]
The first marginal mass of row \(a\) is \(K(a,a)\ge3/4\), while the second
marginal mass away from \(b\) is \(K(b,V\setminus\{b\})\le1/4\).  Hence
every coupling has
\[
        \pi(a,b)\ge\frac34-\frac14=\frac12.
\]
In particular \(\pi(a,b)\ge s\).  
Define the modified transport plan $\widetilde\pi$ explicitly by
\[
\widetilde\pi(u,v):=
\begin{cases}
\pi(a,b^-)+s,
    &(u,v)=(a,b^-),\\
\pi(a,b)-s,
    &(u,v)=(a,b),\\
0,
    &u\neq a,\ v=b^-,\\
\pi(u,b)+\pi(u,b^-),
    &u\neq a,\ v=b,\\
\pi(u,v),
    &\text{otherwise}.
\end{cases}
\]
By construction, $\widetilde\pi$ is non-negative and has the same marginals as $\pi$. Thus \(\widetilde\pi\) is again a
coupling of \(K(a,\cdot)\) and \(K(b,\cdot)\).  Moreover,
\[
        \widetilde\pi(a,b^-)
        =\pi(a,b^-)+s
        =K(b,b^-).
\]
Finally, the change in transport cost is
\[
\begin{aligned}
&\sum_{u,v}\dist(u,v)
  \{\widetilde\pi(u,v)-\pi(u,v)\}\\
&\quad=
\sum_{u\neq a}\pi(u,b^-)
\bigl\{
\dist(a,b^-)+\dist(u,b)
-\dist(a,b)-\dist(u,b^-)
\bigr\}\\
&\quad=
\sum_{u\neq a}\pi(u,b^-)
\bigl\{-1+\dist(u,b)-\dist(u,b^-)\bigr\}
\le0,
\end{aligned}
\]
because \(b\sim b^-\) implies
\(\dist(u,b)\le\dist(u,b^-)+1\).  Taking
\(\pi_{a,b}:=\widetilde\pi\) proves \eqref{eq:forced-gate-compact}.
\end{proof}
Now, we are going to ready to construct the reference measure and change the reference measure by a geodesic drift.
\begin{proposition}[Reference coupling]\label{prop:reference-coupling}
There is a continuous-time Markovian coupling of two copies of the
\(L=P-I\) chain, sticky on the diagonal, with generator \(\mathcal A_0\) on
\(V\times V\), such that
for \(\dist(\cdot,\cdot)\),
\begin{equation*}
        \mathcal A_0\dist(a,b)\le0,\qquad  \mathcal A_0\dist^2(a,b)\leq 16
        \qquad a,b\in V,
\end{equation*}
and whenever \(a\neq b\), the reference rate of the geodesic drift is
\[
        (a,b)\longrightarrow (a,b^-)
\]
has rate
\begin{equation*}
 \kappa(a,b)
 :=4\pi_{a,b}(a,b^-)
 =P(b,b^-)
 \ge P_{\min}.
\end{equation*}
\end{proposition}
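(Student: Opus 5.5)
The construction I have in mind defines the coupled generator directly from the couplings of Lemma~\ref{lem:forced-gate}. Since $K=I+\frac14 L$, we have $L=4(K-I)$. For $a\neq b$ I would set
\[
        \mathcal A_0 f(a,b):=4\sum_{u,v}\pi_{a,b}(u,v)\bigl(f(u,v)-f(a,b)\bigr),
\]
and on the diagonal
\[
        \mathcal A_0 f(a,a):=\sum_{u}P(a,u)\bigl(f(u,u)-f(a,a)\bigr),
\]
which is the sticky part. The first step is to check that this is a coupling generator. If $f(u,v)=g(u)$, then because the first marginal of $\pi_{a,b}$ is $K(a,\cdot)$, we get $\mathcal A_0 f(a,b)=4(Kg(a)-g(a))=Lg(a)$. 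The same argument with the second marginal handles the second coordinate, and the diagonal case is immediate. The total jump rate is bounded by $4$, so the process is non-explosive. Hence each coordinate is a copy of the $L$-chain; this is standard for bounded-rate Markovian couplings on countable spaces.

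The next step reads off the drift rate. The transition $(a,b)\to(a,b^-)$ has rate $4\pi_{a,b}(a,b^-)=4K(b,b^-)=P(b,b^-)$ by \eqref{eq:forced-gate-compact}. Since $b^-\sim b$ and $b^-\neq b$, this rate is at least $\Pmin$.

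For the distance I take $f=\dist$. Off the diagonal,
\[
        \mathcal A_0\dist(a,b)=4\Bigl(\sum\dist(u,v)\pi_{a,b}(u,v)-\dist(a,b)\Bigr)\le0
\]
by the first part of \eqref{eq:forced-gate-compact}. On the diagonal the value is $0$.

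The bound on $\dist^2$ is where some care is needed. Write $D=\dist(a,b)$ and $D'=\dist(u,v)$. Then $D'^2-D^2=2D(D'-D)+(D'-D)^2$. The first term contributes $2D\,\mathcal A_0\dist\le0$. For the second term I would use that, under $\pi_{a,b}$, we have $u\in\{a\}\cup N(a)$ and $v\in\{b\}\cup N(b)$, where $N(\cdot)$ denotes the support-neighbours. The triangle inequality then gives $|D'-D|\le2$, so $(D'-D)^2\le4$. Consequently
\[
        \mathcal A_0\dist^2(a,b)\le 4\cdot 4\sum_{u,v}\pi_{a,b}(u,v)=16.
\]
On the diagonal, $\mathcal A_0\dist^2=0$.

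I do not expect a serious obstacle. The only delicate points are these: the rate factor $4$ comes from $L=4(K-I)$ and must be tracked correctly; on the diagonal we must move both coordinates together so the process stays sticky; and the marginal check needs $\pi_{a,b}$ to be exactly a coupling of the one-step kernels $K(a,\cdot)$ and $K(b,\cdot)$, including its lazy mass.
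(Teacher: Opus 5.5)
Your proposal is correct and follows the paper's proof essentially step for step. The paper sets $\mathcal A_0=4(\Pi_0-I)$ with $\Pi_0((a,b),\cdot)=\pi_{a,b}$ off the diagonal and the synchronous $K$-coupling on it, which coincides with your diagonal generator $\sum_u P(a,u)\bigl(f(u,u)-f(a,a)\bigr)$; it then gets the rate $P(b,b^-)\ge\Pmin$, the bound $\mathcal A_0\dist\le0$, and $\mathcal A_0\dist^2\le 2\dist(a,b)\,\mathcal A_0\dist(a,b)+16\le 16$ by the same argument, and your explicit marginal check via the marginals of $\pi_{a,b}$ and bounded rates is a small addition that the paper only indicates through $4(K-I)=L$.
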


\begin{proof}
Define a Markov kernel \(\Pi_0\) on the product space \(V\times V\) as
follows.  If \(a\neq b\), let
\[
        \Pi_0((a,b),(u,v)):=\pi_{a,b}(u,v),
\]
where \(\pi_{a,b}\) is the coupling from Lemma \ref{lem:forced-gate}.  On the
diagonal, use the synchronous \(K\)-coupling:
\[
        \Pi_0((a,a),(u,v))
        :=
        K(a,u)\1_{\{u=v\}}.
\]
Now set
\[
        \mathcal A_0:=4(\Pi_0-I).
\]
 In particular,
\[
       \mathcal A_0F(a,b)
        =4\sum_{u,v}\{F(u,v)-F(a,b)\}\pi_{a,b}(u,v),
        \qquad a\neq b.
\]
Thus \(\mathcal A_0\) is a generator on the product state space.  The factor \(4\) turns the lazy marginal kernel
back into the original generator because \(4(K-I)=L\).

The diagonal is sticky as a set: from \((a,a)\) the process
moves only to points \((u,u)\).  Moreover, for \(a\neq b\),
\begin{equation}\label{eq:lapA0}
        \mathcal A_0\dist(a,b)
        =
        4\sum_{u,v}\{\dist(u,v)-\dist(a,b)\}\pi_{a,b}(u,v)
        \le0.
\end{equation}
by Lemma \ref{lem:K-contraction-compact}, while on the diagonal \(A_0\dist(a,a)=0\).
Since $ \operatorname{supp}K(z,\cdot)\subseteq B(z,1)$, every $(u,v)\in\operatorname{supp}\pi_{a,b}$ satisfies$$\bigl|\dist(u,v)-\dist(a,b)\bigr|\le2.$$
Together with \eqref{eq:lapA0}, this gives
\begin{equation}
\mathcal A_0\dist^2(a,b)= 2\dist(a,b)\mathcal A_0\dist(a,b)+4\sum_{u,v}
        \{\dist(u,v)-\dist(a,b)\}^2\pi_{a,b}(u,v)
        \le 16.
\end{equation}

Finally, \eqref{lem:forced-gate}
gives that the off-diagonal drift \((a,b)\to(a,b^-)\) has rate
\[
        \kappa(a,b):=4\pi_{a,b}(a,b^-)
        =P(b,b^-)\ge\Pmin.
\]
\end{proof}

\subsection*{The entropy formula}
The following is an application of Girsanov's theorem; see \cite[Part~I.5]{Point2021} and
\cite[Chapter~III]{JacodShiryaev2003}. 
Let \(Z_t=(X_t,Y_t)\) be a coordinate process, \(\mathcal F_t:=\sigma(Z_s:0\le s\le t),
\) and let
\(
\Delta=\{(a,a):a\in V\}.
\) For \(z=(a,b)\notin\Delta\), set
\[
g(z):=(a,b^-),
\]
and define
\[
N_t^\mathrm g
:=
\sum_{0<s\le t}
\mathbf{1}_{\{Z_s=g(Z_{s-})\}}
\]which counts the geodesic-drift jumps. 

Suppose that, off the diagonal, only the rate of geodesic drift is changed from
$\kappa(Z_{s-})$ to $\kappa(Z_{s-})+\lambda_s(Z_{s-})$, where
$\lambda_s\ge0$ is predictable. Equivalently, we change the reference measure to a time-inhomogeneous Markov chain, in  which  the controlled
time-dependent generator is
\begin{equation*}
        \mathcal A_s^\lambda F(a,b)
        :=
        \mathcal A_0F(a,b)
        +
        \lambda_s(a,b)\{F(a,b^-)-F(a,b)\},
        \qquad a\neq b,
\end{equation*}
and \(\mathcal A_s^\lambda F(a,a):=\mathcal A_0F(a,a)\) on the diagonal.

Let \(\mathbf{P}^0\) denote the law under which the coordinate process
\(Z\) has generator \(\mathcal{A}_0\), and let \(\mathbf{Q}\) denote the
law under which \(Z\) has time-dependent generator
\((\mathcal{A}_s^\lambda)_{s<T}\). 

For $\kappa>0$ and $\lambda\ge0$, define
\begin{equation*}
 \Psi_\kappa(\lambda)
 :=(\kappa+\lambda)
 \log\frac{\kappa+\lambda}{\kappa}-\lambda.
\end{equation*}

Fix $t<T$ and stop the process at a stopping time $\sigma$ on which all
rates are bounded.  Then the path likelihood is
\begin{equation}\label{eq:one-channel-likelihood}
 \log\frac{d\mathbf Q_{t,\sigma}}
          {d\mathbf P^0_{t,\sigma}}
 =
 \int_0^{t\wedge\sigma}
 \log\frac{\kappa(Z_{s-})+\lambda_s(Z_{s-})}
          {\kappa(Z_{s-})}\,dN_s^{\mathrm g}
 -
 \int_0^{t\wedge\sigma}
 \lambda_s(Z_{s-})
 \mathbf 1_{\{Z_{s-}\notin\Delta\}}\,ds.
\end{equation}
Under $\mathbf Q$, the  process
\[
 N_{t\wedge\sigma}^{\mathrm g}
 -
 \int_0^{t\wedge\sigma}
 \{\kappa(Z_{s-})+\lambda_s(Z_{s-})\}
 \mathbf 1_{\{Z_{s-}\notin\Delta\}}\,ds
\]
is a martingale.  Taking $\mathbf Q$-expectations in
\eqref{eq:one-channel-likelihood} therefore yields
\begin{equation}\label{eq:one-channel-entropy}
 D(\mathbf Q_{t,\sigma}\|\mathbf P^0_{t,\sigma})
 =
 \mathbb E_{\mathbf Q}\int_0^{t\wedge\sigma}
 \Psi_{\kappa(Z_{s-})}
 \bigl(\lambda_s(Z_{s-})\bigr)
 \mathbf 1_{\{Z_{s-}\notin\Delta\}}\,ds.
\end{equation}

We will also use the stopped time-inhomogeneous Dynkin formula
\begin{equation}\label{eq:localized-dynkin}
 \mathbb E_{\mathbf Q}
 V_{t\wedge\sigma}(Z_{t\wedge\sigma})
 =
 V_0(Z_0)
 +
 \mathbb E_{\mathbf Q}\int_0^{t\wedge\sigma}
 (\partial_s+A_s^\lambda)V_s(Z_{s-})\,ds.
\end{equation}
 Indeed, adjoining time to the state gives the space--time process
$\widehat Z_s=(s,Z_s)$, whose generator acts on
$\widehat V(s,z)=V_s(z)$ as
\[
 \widehat A\widehat V(s,z)
 =
 \partial_sV_s(z)+A_s^\lambda V_s(z).
\]
Thus \eqref{eq:localized-dynkin} is the classic Dynkin formula; see~\cite[Theorem~9.21]{Klebaner}. In the
application below we take
\[
 \sigma=\sigma_n:=\inf\{s:\dist(Z_s)\ge n\},
\]
first let $n\to\infty$ for fixed $t<T$, and only afterwards let $t\uparrow T$.

Now, we introduce the following lemma that would be used in the following. This is the order-one case of the continuity theorem for R\'enyi
divergences along increasing sigma-fields in 
\cite[Theorem~21]{vanErvenHarremoes2014}. In the notation of that theorem, \(D_1\) is the
Kullback--Leibler divergence.
\begin{lemma}[Continuity along increasing sigma-fields]\label{prop:increasing-entropy}
Let $\mathcal F_n\uparrow\mathcal F_\infty$, where
\[
 \mathcal F_\infty=\sigma\!\left(\bigcup_{n\geq1}\mathcal F_n\right).
\]
For probability laws $\mathbf Q,\mathbf P^0$ on the same measurable space,
\begin{equation*}
 D\bigl(\mathbf Q|_{\mathcal F_\infty}\,\|\,\mathbf P^0|_{\mathcal F_\infty}\bigr)
 =\sup_n D\bigl(\mathbf Q|_{\mathcal F_n}\,\|\,\mathbf P^0|_{\mathcal F_n}\bigr).
\end{equation*}
Consequently, if $\mathcal F_{T-}:=\sigma(\bigcup_{t<T}\mathcal F_t)$, then
\begin{equation*}
 D\bigl(\mathbf Q|_{\mathcal F_{T-}}\,\|\,\mathbf P^0|_{\mathcal F_{T-}}\bigr)
 =\sup_{t<T}D\bigl(\mathbf Q|_{\mathcal F_t}\,\|\,\mathbf P^0|_{\mathcal F_t}\bigr).
\end{equation*}
\end{lemma}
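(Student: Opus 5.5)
The plan is to prove the two inequalities separately. The easy direction is monotonicity, which comes from data processing. The reverse direction uses a martingale argument on the densities. Throughout, write $\mathbf Q_n:=\mathbf Q|_{\mathcal F_n}$, $\mathbf P_n:=\mathbf P^0|_{\mathcal F_n}$, and similarly $\mathbf Q_\infty,\mathbf P_\infty$.

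\emph{Step 1 (monotonicity).} The identity map $(\Omega,\mathcal F_{m})\to(\Omega,\mathcal F_n)$ is measurable for $n\le m\le\infty$. Its pushforwards of $\mathbf Q_m,\mathbf P_m$ are $\mathbf Q_n,\mathbf P_n$. So Proposition~\ref{prop:data-processing} gives that $n\mapsto D(\mathbf Q_n\|\mathbf P_n)$ is nondecreasing and bounded by $D(\mathbf Q_\infty\|\mathbf P_\infty)$. Hence $\sup_n D(\mathbf Q_n\|\mathbf P_n)\le D(\mathbf Q_\infty\|\mathbf P_\infty)$, and for the reverse inequality it suffices to treat the case $S:=\sup_n D(\mathbf Q_n\|\mathbf P_n)<\infty$.

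\emph{Step 2 (martingale limit of densities).} If $S<\infty$, then $\mathbf Q_n\ll\mathbf P_n$ for every $n$. Put $f_n:=d\mathbf Q_n/d\mathbf P_n$. For $A\in\mathcal F_n$ one has $\int_A f_{n+1}\,d\mathbf P^0=\mathbf Q(A)=\int_A f_n\,d\mathbf P^0$, so $(f_n)$ is a nonnegative $\mathbf P^0$-martingale. With $\varphi(u)=u\log u$ we have $\sup_n\E_{\mathbf P^0}\varphi(f_n)=S<\infty$, and $\varphi(u)/u\to\infty$. By de la Vallée-Poussin the family $(f_n)$ is uniformly integrable. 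Hence $f_n\to f_\infty$ almost surely and in $L^1(\mathbf P^0)$, with $f_\infty$ being $\mathcal F_\infty$-measurable.

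Now compare the two measures on sets. For $A\in\bigcup_n\mathcal F_n$, we get $\int_A f_\infty\,d\mathbf P^0=\lim_m\int_A f_m\,d\mathbf P^0=\mathbf Q(A)$. The union $\bigcup_n\mathcal F_n$ is a $\pi$-system generating $\mathcal F_\infty$, and both $\mathbf Q$ and $f_\infty\mathbf P^0$ are probability measures agreeing on it. By the $\pi$--$\lambda$ theorem $\mathbf Q_\infty=f_\infty\,\mathbf P_\infty$, so $\mathbf Q_\infty\ll\mathbf P_\infty$.

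\emph{Step 3 (lower semicontinuity).} Since $\varphi\ge-1/e$ and $\varphi(f_n)\to\varphi(f_\infty)$ almost surely, Fatou's lemma gives
\[
D(\mathbf Q_\infty\|\mathbf P_\infty)=\E_{\mathbf P^0}\varphi(f_\infty)\le\liminf_n\E_{\mathbf P^0}\varphi(f_n)=S.
\]
Together with Step 1 this proves the equality. The main point requiring care is this step, namely the passage to the limit. Convexity alone would give the wrong inequality, so we need both Fatou and the identification of the limit density in Step 2. An alternative route is the Donsker--Varadhan formula $D=\sup_g\{\E_{\mathbf Q}g-\log\E_{\mathbf P^0}e^g\}$, approximating bounded $\mathcal F_\infty$-measurable $g$ by $\mathcal F_n$-measurable ones in $L^1(\mathbf Q+\mathbf P^0)$; this could serve as a backup if the martingale route needed more detail.

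\emph{Step 4 (the consequence).} Choose $t_n\uparrow T$. Then $\mathcal F_{t_n}\uparrow\sigma(\bigcup_n\mathcal F_{t_n})=\mathcal F_{T-}$, because every $t<T$ satisfies $t\le t_n$ for some $n$. The first part then gives $D(\mathbf Q|_{\mathcal F_{T-}}\|\mathbf P^0|_{\mathcal F_{T-}})=\sup_nD(\mathbf Q|_{\mathcal F_{t_n}}\|\mathbf P^0|_{\mathcal F_{t_n}})$. By the monotonicity of Step 1 in $t$, this supremum equals $\sup_{t<T}D(\mathbf Q|_{\mathcal F_t}\|\mathbf P^0|_{\mathcal F_t})$.
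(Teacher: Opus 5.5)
Your argument is correct and complete, but it takes a different route from the paper, which gives no proof of this lemma. The paper only imports the order-one case of van Erven--Harremo\"es' continuity theorem for R\'enyi divergences along increasing $\sigma$-fields (their Theorem~21).

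You instead give a self-contained argument:
\begin{itemize}
\item The easy inequality comes from the paper's own data-processing proposition.
\item For the hard inequality, finiteness of $S$ makes $f_n=d\mathbf Q_n/d\mathbf P_n$ a $\mathbf P^0$-martingale with $\sup_n\E_{\mathbf P^0}[f_n\log f_n]<\infty$. By de la Vall\'ee-Poussin (after shifting by $1/e$) it is uniformly integrable.
\item The $\pi$--$\lambda$ step identifies the almost-sure and $L^1$ limit as $d\mathbf Q_\infty/d\mathbf P_\infty$.
\item Fatou's lemma, using $u\log u\ge -1/e$, then gives $D(\mathbf Q_\infty\|\mathbf P_\infty)\le S$.
\end{itemize}

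The citation buys brevity and generality, since it covers all R\'enyi orders at once. Your route buys transparency on exactly the point the paper's application needs: a uniform entropy bound along the filtration forces $\mathbf Q\ll\mathbf P^0$ on the limit $\sigma$-field, with the same bound. It uses only tools already in the paper plus standard martingale convergence. The Donsker--Varadhan backup you mention would also work: approximate bounded $\mathcal F_\infty$-measurable test functions by uniformly bounded $\mathcal F_n$-measurable ones in $L^1(\mathbf Q+\mathbf P^0)$. Your Step~4 also spells out the reduction from the continuum $\{t<T\}$ to a sequence $t_n\uparrow T$ via monotonicity, which the paper leaves implicit in the word ``consequently''.
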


\begin{theorem}    \label{prop:controlled-coalescence}
Let $\mathbf P^0$ be the reference coupling above, started from $(x,y)$.  let $T>0$  and Set
\(
    \mathcal F_{T-}
    :=
    \sigma\!\left(\bigcup_{t<T}\mathcal F_t\right).
\)  There is a time-inhomogeneous law $\mathbf Q$ on pair paths
such that:
\begin{enumerate}[label=\textup{(\roman*)},leftmargin=2.2em]
\item the entire $X$-path has under $\mathbf Q$ the original $P_t$-law
started from $x$;
\item $\mathbf Q(\tau<T)=1$, where
$\tau:=\inf\{s:X_s=Y_s\}$;
\item
$
 D\!\left(
    \mathbf Q|_{\mathcal F_{T-}}
    \,\middle\|\,
    \mathbf P^0|_{\mathcal F_{T-}}
\right)
\leq 
\frac{32}{P_{\min}}
\left(
    \frac{r^2}{T}
    +
    \frac{r}{\sqrt T}
\right),~
 r=\dist(x,y).
$
\end{enumerate}
\end{theorem}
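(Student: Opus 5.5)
We construct $\mathbf Q$ by adding a state-dependent, time-dependent extra rate $\lambda_s$ on the geodesic-drift channel $(a,b)\to(a,b^-)$ of the reference coupling of Proposition~\ref{prop:reference-coupling}. The choice is
\[
\lambda_s(a,b):=\frac{c\,\dist(a,b)}{T-s}\,\1_{\{a\ne b\}}, \qquad s<T,
\]
with a constant $c$ to be fixed; a variant adds a term like $1/\sqrt{T-s}$ to handle $\dist=1$. The drift moves only the second coordinate, so the $X$-marginal transition structure is unchanged. Each row of $\pi_{a,b}$ has first marginal $K(a,\cdot)$, and the extra jump fixes $a$. Hence under $\mathbf Q$ the process $X$ is still a Markov chain with generator $L$ (by a martingale-problem check), which gives (i).

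For (ii), we apply the stopped Dynkin formula \eqref{eq:localized-dynkin} to $\rho_s=\dist(X_s,Y_s)$. Proposition~\ref{prop:reference-coupling} gives $\mathcal A_0\dist\le0$, and the drift lowers $\dist$ by one at rate $\kappa+\lambda_s$. Therefore
\[
\frac{d}{ds}\E_{\mathbf Q}\rho_s\le-\frac{c}{T-s}\E_{\mathbf Q}\rho_s,
\]
so $\E_{\mathbf Q}\rho_s\le r\bigl((T-s)/T\bigr)^{c}\to0$ as $s\uparrow T$. Since $\rho$ is integer valued and the diagonal is sticky, this yields $\mathbf Q(\tau<T)=1$. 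The localization $\sigma_n$ removes integrability issues; we let $n\to\infty$ first and then $t\uparrow T$, as prescribed.

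For (iii), we use \eqref{eq:one-channel-entropy} together with the bound $\Psi_\kappa(\lambda)\le \lambda^2/\kappa\le\lambda^2/P_{\min}$, which follows from $\log(1+u)\le u$. This gives
\[
D\le \frac{c^2}{P_{\min}}\,\E_{\mathbf Q}\int_0^{T}\frac{\rho_s^2}{(T-s)^2}\,ds .
\]
The main obstacle is controlling $\E_{\mathbf Q}\rho_s^2$ near $s=T$: we need $\rho^2/(T-s)^2$ to be integrable. We apply Dynkin to $\rho^2$, using $\mathcal A_0\dist^2\le16$ and the fact that the drift contributes $(\kappa+\lambda)\bigl((\rho-1)^2-\rho^2\bigr)\le-\lambda(2\rho-1)$. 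This gives
\[
\frac{d}{ds}\E\rho_s^2\le 16+\E\frac{c\rho_s}{T-s}-\E\frac{2c\rho_s^2}{T-s}.
\]
With $c=2$ and $\rho\le\rho^2$ we obtain $\frac{d}{ds}\E\rho_s^2\le16-\frac{2}{T-s}\E\rho_s^2$. Integrating with the factor $(T-s)^{-2}$ gives
\[
\E\rho_s^2\le r^2\Bigl(\frac{T-s}{T}\Bigr)^2+16(T-s).
\]
The $r^2$ part then contributes $\lesssim r^2/T$ to the entropy integral. The $16(T-s)$ part is not integrable against $(T-s)^{-2}$, so we refine the estimate. Since $\rho_s^2\ge\rho_s\,\1_{\{\rho_s\ge1\}}$, we bound the integrand by $\rho_s^2/(T-s)^2$ only while $T-s\ge \delta$. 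On the final window we instead use $\lambda\propto\rho/\sqrt{T(T-s)}$-type rates, or equivalently a split at the time $T-\delta$, where the first-moment bound $\E\rho_s\le r\bigl((T-s)/T\bigr)^{c}$ controls the remaining cost. The choice $\delta$ comparable to $T$ produces the $r/\sqrt T$ term. Finally, Lemma~\ref{prop:increasing-entropy} passes from $\mathcal F_t$, $t<T$, to $\mathcal F_{T-}$, and the constant is tracked to $32/P_{\min}$.
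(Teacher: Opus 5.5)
Your treatment of (i) and (ii) is sound: the added jump fixes the first coordinate, and $\mathcal A_s^\lambda\dist\le-c\,\dist/(T-s)$ gives $\E_{\mathbf Q}\rho_s\to0$. The gap is in (iii), and the repair in your last paragraph does not close it.

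With $\lambda_s=c\rho/(T-s)$ and $\Psi_\kappa(\lambda)\le\lambda^2/\kappa$, your bound $\E\rho_s^2\le r^2((T-s)/T)^2+16(T-s)$ leaves the divergent integral $\int_0^T16\,ds/(T-s)$. The proposed fixes do not remove it:
\begin{itemize}
\item Switching on a final window to rates $\lambda\propto\rho/\sqrt{T(T-s)}$ makes the integrated coalescence hazard finite. Then $\mathbf Q(\tau<T)=1$ is lost.
\item A first-moment bound cannot control the cost rate $\Psi_\kappa(c\rho/(T-s))$, which is superlinear in $\rho$.
\item Even the most favourable version of your idea falls short. Since $\mathcal A_0\dist^2=0$ on the diagonal, you may replace $16$ by $16\,\mathbf Q(\rho_s\ge1)$ in the $\rho^2$-inequality and then use $\mathbf Q(\rho_s\ge1)\le\E_{\mathbf Q}\rho_s\le r((T-s)/T)^2$. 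This yields only $\int_0^T\E\rho_s^2(T-s)^{-2}ds\le r^2/T+8r$, i.e.\ $D\lesssim\Pmin^{-1}(r^2/T+r)$, which misses the claimed $r/\sqrt T$ by a factor $\sqrt T$.
\end{itemize}
This is not a bookkeeping issue. By data processing, (iii) implies $-\log\mathbf P^0(\tau<T)\le D(\mathbf Q\|\mathbf P^0)$. So it encodes the fact that the reference coupling itself coalesces before $T$ with probability $1-O(r/\sqrt T)$. Nothing in your moment estimates produces such a non-coalescence bound.

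The missing idea is the paper's verification (Hamilton--Jacobi type) argument, using an \emph{exponential} rather than a linear control. Take $V_s(a,b)=B\bigl(\dist(a,b)^2/(T-s)+\dist(a,b)/\sqrt{T-s}\bigr)$ with $B=32/\Pmin$. Let $H_s=V_s(a,b)-V_s(a,b^-)$ and set $\lambda_s=\kappa(e^{H_s}-1)$. Then
$$\lambda_s\{V_s(a,b^-)-V_s(a,b)\}+\Psi_\kappa(\lambda_s)=\kappa(1+H_s-e^{H_s})\le-\Pmin H_s^2/2.$$
Because of the $\dist/\sqrt{T-s}$ term, $H_s\ge B/\sqrt{T-s}$. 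So this negative term absorbs both $\partial_sV_s$ and the variance contribution $16B/(T-s)$ from $\mathcal A_0\dist^2\le16$ --- exactly the term you could not integrate.

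The resulting pointwise inequality $(\partial_s+\mathcal A_s^\lambda)V_s+\Psi_\kappa(\lambda_s)\le0$, combined with the entropy formula and the localized Dynkin formula, gives $\E_{\mathbf Q}V_t(X_t,Y_t)+D(\mathbf Q|_{\mathcal F_t}\|\mathbf P^0|_{\mathcal F_t})\le V_0(x,y)$ for $t<T$. This yields (iii) after passing to $\mathcal F_{T-}$. It also yields (ii), since $V_t\ge B/\sqrt{T-t}$ on $\{\tau>t\}$. No moment estimates for $\rho_s$ under $\mathbf Q$ are needed.
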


\begin{proof}
Write  $r=\dist(x,y)$  and let $a\neq b$ in $V$; the case $r=0$ is immediate.  For $s<T$, put
$u=T-s$ and
\begin{equation*}
 V_s(a,b)
 :=B\left(\frac{\dist(a,b)^2}{u}
          +\frac{\dist(a,b)}{\sqrt u}\right),
 \qquad B:=\frac{32}{P_{\min}}.
\end{equation*}
At an off-diagonal state with $m=\dist(a,b)\geq 1$, define the
\begin{equation*}
 H_s(a,b)
 :=V_s(a,b)-V_s(a,b^-)
 =B\left(\frac{2\dist(a,b)-1}{u}+\frac1{\sqrt u}\right)\geq 0.
\end{equation*}
Increase only the geodesic drift rate, choosing
\begin{equation*}
 \lambda_s(a,b)
 :=\kappa(a,b)\bigl(e^{H_s(a,b)}-1\bigr).
\end{equation*}
and $\lambda_s(a,a):=0$. The geodesic drift jump leaves the first
coordinate fixed; hence the entire $X$-path marginal remains the original
chain with generator $L$.

The controlled process is non-explosive on $[0,T)$.  If $N_t^0$ counts the
rate-four reference rings and $N_t^\lambda$ the added geodesic drift jumps, then every added jump decreases $\dist(X_s,Y_s)$ by one, whereas each reference ring increases it by at most two. Hence, pathwise,
\begin{equation}\label{eq:nonexplosion-compact}
 N_t^\lambda\le r+2N_t^0,
 \qquad t<T.
\end{equation}
in this construction. Moreover, the added jumps leave the first coordinate unchanged, while the reference part has first marginal generator \(L\).

Now, at the state with \(\dist(a,b)=m\ge1\) , direct calculation yields
\[
        \partial_sV_s(a,b)
        =B\left(\frac{m^2}{u^2}+\frac{m}{2u^{3/2}}\right),
\]
while
\[
        \mathcal A_0V_s(a,b)
        =B\left(\frac{\mathcal A_0\dist^2(a,b)}u
        +\frac{\mathcal A_0\dist(a,b)}{\sqrt u}\right)
        \le\frac{16B}{u}.
\]
Adding these two inequalities gives
\begin{equation}\label{eq:uncontrolled-lyapunov-drift}
        (\partial_s+A_0)V_s(a,b)
        \le
        B\left(
        \frac{m^2}{u^2}
        +
        \frac{m}{2u^{3/2}}
        +
        \frac{16}{u}
        \right).
\end{equation}

The  contribution to the drift of \(V\) plus the entropy rate is
\begin{equation}\label{eq:phi-cancellation}
\begin{aligned}
        &\lambda_s(a,b)(V_s(a,b^-)-V_s(a,b))
        +\Psi_{\kappa(a,b)}(\lambda_s(a,b))                 \\
        &\qquad =
        \kappa(a,b)\{1+H_s(a,b)-e^{H_s(a,b)}\}
        \le
        -\frac{\Pmin}{2}H_s(a,b)^2.
\end{aligned}
\end{equation}
Indeed, \(\kappa+\lambda_s=\kappa e^{H_s}\), so substitution into the definition
of \(\Psi\) gives the equality.  We then used \(\kappa(a,b)\ge\Pmin\) and
\(1+h-e^h\le-h^2/2\) for \(h\ge0\).  The cancellation in the middle line is
the reason for the choice
\(\lambda_s=\kappa(e^{H_s}-1)\).
Since \(m\ge1\),
\begin{equation}\label{eq:drop-square-lower}
        H_s(a,b)^2
        \ge
        B^2
        \left(
        \frac{m^2}{u^2}
        +
        \frac{2m}{u^{3/2}}
        +
        \frac1u
        \right).
\end{equation}
Combining \eqref{eq:uncontrolled-lyapunov-drift}--\eqref{eq:drop-square-lower} and note that 
$$ (\partial_s+A_s^\lambda)V_s(a,b)= (\partial_s+A_0)V_s(a,b)+\lambda_s(a,b)(V_s(a,b^-)-V_s(a,b)),$$
we yield
\begin{equation}\label{eq:lyapunov-entropy-drift}
        (\partial_s+A_s^\lambda)V_s(a,b)
        +
        \Psi_{\kappa(a,b)}(\lambda_s(a,b))
        \le0
\end{equation}
for all \(a\neq b\), and both terms vanish on the diagonal.

Fix $t<T$ and let
\[
 \sigma_n:=\inf\{s\geq0:\dist(X_s,Y_s)\geq n\},
 \qquad
 \theta_n:=t\wedge\sigma_n.
\]
Up to $\theta_n$, the distance, the controlled rates, and all quantities entering Dynkin's formula are bounded.  Let
\[
 \mathcal G_n:=\mathcal F_{\theta_n}.
\]
Combining \eqref{eq:one-channel-entropy} and Dynkin's formula \eqref{eq:localized-dynkin}  with \eqref{eq:lyapunov-entropy-drift} gives
\begin{equation}\label{eq:localLyaponov}
 \E_{\mathbf Q}V_{\theta_n}(X_{\theta_n},Y_{\theta_n})
 +D\bigl(\mathbf Q|_{\mathcal G_n}\,\|\,\mathbf P^0|_{\mathcal G_n}\bigr)
 \leq V_0(x,y).
\end{equation}
By \eqref{eq:nonexplosion-compact}, $\sup_{s\leq t}\dist(X_s,Y_s)<\infty$ almost surely under $\mathbf Q$; the same is immediate under $\mathbf P^0$.  Hence $\sigma_n>t$ eventually under both laws.  The sigma-fields $\mathcal G_n$ increase to $\mathcal F_t$.  Lemma~\ref{prop:increasing-entropy}, Fatou's lemma, and \eqref{eq:localLyaponov} therefore yield
\begin{equation}\label{eq:lyapunov-entropy-budget}
 \E_{\mathbf Q}V_t(X_t,Y_t)
 +D\bigl(\mathbf Q|_{\mathcal F_t}\,\|\,\mathbf P^0|_{\mathcal F_t}\bigr)
 \leq V_0(x,y),
 \qquad t<T.
\end{equation}

On $\{\tau>t\}$, the integer-valued distance is at least one, and hence
\[
 V_t(X_t,Y_t)\ge\frac{B}{\sqrt{T-t}}.
\]
Thus
\[
 \mathbf Q(\tau>t)
 \le\frac{V_0(x,y)}B\sqrt{T-t}\longrightarrow0
 \qquad(t\uparrow T),
\]
so $\mathbf Q(\tau<T)=1$.

Recall
\[
    \mathcal F_{T-}
    :=
    \sigma\!\left(\bigcup_{t<T}\mathcal F_t\right).
\]
and choose any sequence \(t_n\uparrow T\). 
Lemma~\ref{prop:increasing-entropy} and \eqref{eq:lyapunov-entropy-budget} give
\[
\begin{aligned}
D\!\left(
    \mathbf Q|_{\mathcal F_{T-}}
    \,\middle\|\,
    \mathbf P^0|_{\mathcal F_{T-}}
\right)
&=
\lim_{n\to\infty}
D\!\left(
    \mathbf Q|_{\mathcal F_{t_n}}
    \,\middle\|\,
    \mathbf P^0|_{\mathcal F_{t_n}}
\right)                                                     \\
&\le
V_0(x,y)
=
\frac{32}{P_{\min}}
\left(
    \frac{r^2}{T}
    +
    \frac{r}{\sqrt T}
\right),
\qquad r=\dist(x,y).
\end{aligned}
\]

\end{proof}
\begin{proof}[Proof of Theorem  \ref{matrixharnack}]
    Since the paths have left limits, the  map
\[
    e_{T-}^{Y}:\omega\longmapsto Y_{T-}(\omega)
\]
is \(\mathcal F_{T-}\)-measurable.  Under the reference law
\(\mathbf P^0\), the second coordinate is the original chain started
from \(y\).  Since an ordinary continuous-time chain has no jump at the
fixed deterministic time \(T\),
\[
    (e_{T-}^{Y})_{\#}\mathbf P^0
    =
    P_T(y,\cdot).
\]
Under \(\mathbf Q\), the coupling time satisfies \(\tau<T\) almost
surely, and the diagonal is absorbing.  Hence
\[
    Y_{T-}=X_{T-}
    \qquad \mathbf Q\text{-a.s.}
\]
Moreover, the complete \(X\)-path marginal under \(\mathbf Q\) is the
law of the original chain started from \(x\), and therefore
\[
    (e_{T-}^{Y})_{\#}\mathbf Q
    =
    \Law_{\mathbf Q}(X_{T-})
    =
    P_T(x,\cdot).
\]
Proposition \eqref{prop:data-processing} applied to \(e_{T-}^{Y}\) now yields
\[
\begin{aligned}
D\bigl(P_T(x,\cdot)\|P_T(y,\cdot)\bigr)
&\le
D\!\left(
    \mathbf Q|_{\mathcal F_{T-}}
    \,\middle\|\,
    \mathbf P^0|_{\mathcal F_{T-}}
\right)                                                     \\
&\le
\frac{32}{P_{\min}}
\left(
    \frac{\dist(x,y)^2}{T}
    +
    \frac{\dist(x,y)}{\sqrt T}
\right).
\end{aligned}
\]
Consequently, for every bounded $f:V\to(0,\infty)$,
\begin{equation*}\label{eq:general-log-harnack}
 P_T\log f(x)
 \le\log P_Tf(y)
 +\frac{32}{P_{\min}}
 \left(\frac{\dist(x,y)^2}{T}
       +\frac{\dist(x,y)}{\sqrt T}\right).
\end{equation*}
\end{proof}
\begin{proof}[Proof of Theorem~\ref{thm:entropy-intro}]
    For a graph with bounded degree $d$,  we have $P_{\min}\geq 1/d$. Theorem \ref{matrixharnack} applies.
\end{proof}

\begin{proof}[Proof of Theorem~\ref{thm:quasi-intro}]
    Apply Theorem \ref{thm:entropy-intro} and Theorem  \ref{thm:quasi-intro-lh}.
    \end{proof}

\section*{Acknowledgments}
We thank Bang-Xian Han, Xueping Huang, Liming Yin, and Zhuo-Nan Zhu for helpful comments and
discussion.

\bibliographystyle{plain}
\bibliography{displacement}

\end{document}